\documentclass [11pt]{amsart}
\usepackage {amsmath, amssymb, amscd, mathrsfs, hyperref,  slashed, graphicx, color, enumerate, url}
\usepackage[text={6in,8.5in},centering,letterpaper,dvips]{geometry}
\usepackage[all,cmtip]{xy}

\setlength{\parskip}{0pt}
\setlength{\labelsep}{10pt} \setlength{\parindent}{12pt}
\setlength{\medskipamount}{2ex} \setlength{\smallskipamount}{1ex}

\newtheorem {theorem}{Theorem}[section]
\newtheorem {lemma}[theorem]{Lemma}
\newtheorem {proposition}[theorem]{Proposition}
\newtheorem {corollary}[theorem]{Corollary}

\newtheorem {definition}[theorem]{Definition}

\theoremstyle{remark}
\newtheorem {remark}[theorem]{Remark}
\newtheorem {example}[theorem]{Example}
\newtheorem {fact}[theorem]{Fact}

\DeclareFontFamily{U}{mathx}{\hyphenchar\font45}
\DeclareFontShape{U}{mathx}{m}{n}{
      <5> <6> <7> <8> <9> <10>
      <10.95> <12> <14.4> <17.28> <20.74> <24.88>
      mathx10
      }{}
\DeclareSymbolFont{mathx}{U}{mathx}{m}{n}
\DeclareFontSubstitution{U}{mathx}{m}{n}
\DeclareMathAccent{\widecheck}{0}{mathx}{"71}
\newcommand\eps{\varepsilon}
\def\polhk#1{\setbox0=\hbox{#1}{\ooalign{\hidewidth
    \lower1.5ex\hbox{`}\hidewidth\crcr\unhbox0}}}  
\def\Z {{\mathbb{Z}}}
\def\R {{\mathbb{R}}}
\def\C {{\mathbb{C}}}
\def\Q {{\mathbb{Q}}}
\def\Conf {\mathcal{C}}

\def\tR{\tilde{\R}}
\def\Spin {\mathbb{S}}
\def\S {\mathcal{S}}
\def\dirac {\slashed{\partial}}
\def\Dirac{\slashed{D}}
\def\tH{\tilde{H}}
\def\rp{\mathbb{RP}}

\def\hp{\mathbb{HP}}
\def\F {\mathbb{F}_2}
\def\del {\partial}
\def\tT{\tilde{T}}

\def\t{\mathfrak{t}}

\DeclareMathOperator{\id}{\operatorname{id}}
\def\ind{\operatorname{ind}}

\DeclareMathOperator{\im}{\operatorname{image}}

\def\To {\longrightarrow}

\def\hyp{\bigl( \begin{smallmatrix}0 & 1 \\ 1 & 0\end{smallmatrix} \bigr)}

\def\mod {\operatorname{mod}}

\def\swf{\operatorname{SWF}}
\def\swfh{\mathit{SWFH}}
\def\swfk{\mathit{SWFK}}
\def\iswfh{ ^{\infty}\! \swfh}
\def\pin {\operatorname{Pin}(2)}

\def\H {\mathbb{H}}
\def\iH{ ^{\infty}\! \tH}

\def\i {\mathfrak{I}}
\def\a {\mathfrak{a}}
\def\tK{\tilde{K}}
\def\tC{\tilde{\C}}
\def\tG{\tilde{G}}
\def\tc{\tilde{c}}
\def\tZ{\tilde{Z}}
\def\E{\mathfrak{E}}
\def\csd{\mathit{CSD}}

\newcommand{\s}{\mathfrak{s}}

\newcommand{\vnt}{V^\nu_\tau}
\def\vnu {V^{\nu}_{-\nu}}

\def\pt {\operatorname{pt}}

\def\eps{\varepsilon}
\hbadness=100000

\begin{document}

\title[Intersection forms of spin four-manifolds with boundary]{On the intersection forms of spin four-manifolds with boundary}

\author[Ciprian Manolescu]{Ciprian Manolescu}
\thanks {The author was supported by NSF grant DMS-1104406.}
\address {Department of Mathematics, UCLA, 520 Portola Plaza\\ Los Angeles, CA 90095}
\email {cm@math.ucla.edu}

\begin{abstract}
We prove Furuta-type bounds for the intersection forms of spin cobordisms between homology $3$-spheres. The bounds are in terms of a new numerical invariant of homology spheres, obtained from $\pin$-equivariant Seiberg-\!Witten Floer K-theory. In the process we introduce the notion of a Floer $K_G$-split homology sphere; this concept may be useful in an approach to the 11/8 conjecture.
\end {abstract}

\maketitle

\section{Introduction}

Let $X$ be a smooth, oriented, spin $4$-dimensional manifold. Donaldson's diagonalizability theorem \cite{Donaldson, DonaldsonOr} implies that if $X$ is closed, then $X$ cannot have a non-trivial definite intersection form.  If $X$ is not closed but has boundary a homology $3$-sphere $Y$, its intersection form is still unimodular, and Fr{\o}yshov \cite{Froyshov} found constraints on the definite intersection forms of such $X$; see also \cite{FurutaBrazil, Nicolaescu}. These constraints depend on an invariant associated to the boundary $Y$ and, later, various other invariants of this type have been developed \cite{FroyshovYM, FroyshovHM, AbsGraded, KMOS, swfh}. 

With respect to indefinite forms, the situation is less understood. If $X$ is closed, Matsumoto's 11/8 conjecture \cite{Matsumoto} states that $b_2(X) \geq \frac{11}{8}|\sigma(X)|$. (Here, $\sigma$ denotes the signature.) Since $X$ is spin, its intersection form must be even. A unimodular, even indefinite form (of, say, nonpositive signature) can be decomposed as 
$$p(-E_8) \oplus q\hyp, \ \ p \geq 0, q > 0.$$
For forms coming from closed spin $4$-manifolds, Rokhlin's theorem \cite{Rokhlin} implies that $p$ is even. Since $b_2(X) = 8p+2q$ and $p=|\sigma(X)|/8$, the 11/8 conjecture can be rephrased as 
\begin{equation}
\label{eq:11_8}
q \geq  3p/2.
\end{equation}
An important result in this direction was obtained by Furuta \cite{Furuta}, who proved the inequality $b_2(X) \geq \frac{10}{8}|\sigma(X)| + 2$, i.e., 
\begin{equation}
\label{eq:10_8}
q \geq p + 1. 
\end{equation}
The free coefficient $1$ in the bound can sometimes be improved slightly, depending on the value of $p$ mod $8$; see \cite{Crabb, Stolz, Schmidt}.

The purpose of this paper is to obtain constraints on the indefinite intersection forms of spin four-manifolds with boundary. Although many of the results can be extended to the setting where the boundary $\del X = Y$ is a disjoint union of rational homology spheres (equipped with spin structures), for simplicity we will focus on the case where $Y$ consists of either one or two integral homology $3$-spheres. Then, the intersection form must still be of the type $p(-E_8) \oplus q\hyp$, and the parity of $p$ is given by the Rokhlin invariant of the boundary. (We allow here the case $p < 0$, and then we interpret $p(-E_8)$ as a direct sum of copies of the positive form $E_8$.) One method to obtain constraints on the intersection form is to pick a spin $4$-manifold $X'$ with boundary $-Y$, and apply Furuta's bound \eqref{eq:10_8} to the closed manifold $X \cup_{Y} X'$. This method can be refined by choosing $X'$ to be an orbifold rather than a manifold, and applying the orbifold version of \eqref{eq:10_8} developed by Fukumoto and Furuta in \cite{FukumotoFuruta}. We refer to \cite{BohrLee, FukumotoBG, Donald} for some results obtained using these methods.

In this paper we find further constraints by a different technique, based on adapting Furuta's proof of \eqref{eq:10_8} to the setting of manifolds with boundary. (However, our proof does not use the Adams operations, so it is in fact closer in spirit to Bryan's modification of Furuta's argument \cite{Bryan}.) Here is the first result:

\begin{theorem}
\label{thm:main1}
To every oriented homology $3$-sphere $Y$ we can associate an invariant $\kappa(Y) \in \Z$, with the following properties:
\begin{enumerate}[(i)]
\item The mod $2$ reduction of $\kappa(Y)$ is the Rokhlin invariant $\mu(Y)$;
\item Suppose that $W$ is a smooth, spin, negative-definite cobordism from $Y_0$ to $Y_1$, and let $b_2(W)$ denote the second Betti number of $W$. Then: 
$$\kappa(Y_1) \geq \kappa(Y_0) + \frac{1}{8} b_2(W).$$
\item Suppose that $W$ is a smooth, spin cobordism from $Y_0$ to $Y_1$, with intersection form $p(-E_8) \oplus q\hyp$. Then:
$$ \kappa(Y_1) + q \geq \kappa(Y_0) + p-1.$$
\end{enumerate}
\end{theorem}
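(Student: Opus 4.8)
The plan is to define $\kappa(Y)$ through the $\pin$-equivariant Seiberg--Witten Floer $K$-theory of $Y$, and then to obtain (ii) and (iii) by feeding the relative Bauer--Furuta invariant of $W$ into a $K$-theoretic inequality for $\pin$-equivariant maps, in the spirit of Bryan's reworking of Furuta's argument (in particular without Adams operations).

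\emph{The invariant and property (i).} To $Y$ one associates a $\pin$-equivariant Floer spectrum $\swf(Y)$, well defined up to $\pin$-stable equivalence and up to suspension by formal differences of the representations $\tC$ and $\tR$, and normalized by the Dirac correction term so that the inclusion of its $\pin$-fixed subspectrum (which, since $j$ fixes the reducible, coincides with the $S^1$-fixed subspectrum) is $S^0$ up to a controlled $\tR$-suspension. I would define $\kappa(Y)$ out of the reduced $\pin$-equivariant $K$-theory $\widetilde{K}_{\pin}(\swf(Y))$: namely as a fixed multiple of the divisibility, by the $K$-theoretic Euler class of $\tC$, of a canonical Bauer--Furuta class $z$ (the image of the fixed-point generator, equivalently the class of an arbitrary spin filling of $Y$), after an affine shift by the correction terms so that $\kappa(S^3)=0$. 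Property (i) is then bookkeeping: $\kappa(Y)-\kappa(S^3)$ agrees modulo $2$ with the complex index of the Dirac operator of any spin filling of $Y$, which is $\tfrac18\sigma \bmod 2$, and this equals $\mu(Y)$ by Rokhlin's theorem; alternatively the parity can be read off the $\tR$-suspension exponent in the fixed-point normalization.

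\emph{Relative Bauer--Furuta and the main inequality.} After a surgery reduction we may assume $b_1(W)=0$. A finite-dimensional approximation of the Seiberg--Witten equations on $W$ produces a $\pin$-equivariant stable map
\[
\psi_W\colon \swf(Y_0)\wedge (\tC^{A+p}\oplus \tR^{B})^+ \longrightarrow \swf(Y_1)\wedge (\tC^{A}\oplus \tR^{B+q})^+
\]
for suitable large $A,B$, in which the shift by $p$ copies of $\tC$ records $\tfrac18\sigma(W)$ via the Dirac index and the shift by $q$ copies of $\tR$ records $b_2^+(W)=q$; the $\eta$-invariant corrections are exactly those absorbed into the normalizations of $\swf(Y_0)$ and $\swf(Y_1)$. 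Restricted to $S^1$-fixed points, $\psi_W$ becomes the Bauer--Furuta map of the unique reducible and carries a canonical nonzero $\Z/2$-equivariant class -- indeed a degree $\pm1$ self-equivalence of $S^0$ when $b_2^+(W)=0$. The crux is a $K$-theoretic lemma: a $\pin$-equivariant stable map between representation-sphere smashes of $\swf$-type spectra that is nonzero on $S^1$-fixed points forces $\kappa(Y_1)+q\ge \kappa(Y_0)+p-1$. To prove it, one works in $\widetilde{K}_{\pin}$, where the induced map is multiplication by an element of $R(\pin)$ comparing the Euler classes of source and target; restricting along $S^1\subset\pin$ and then to fixed points pins this element down modulo the relevant ideals, and the structure of $R(\pin)$ -- in particular $\Lambda^2\tC\cong\tR$ and the $2$-torsion in the $\Z/2$-equivariant $K$-theory of sign-representation spheres (the $\widetilde K(\rp^n)$ phenomenon) -- then yields the stated inequality. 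This proves (iii).

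\emph{The definite case, and the main obstacle.} When $W$ is negative definite we have $q=b_2^+(W)=0$, so no $\tR$-suspension is introduced on the target and the $S^1$-fixed restriction of $\psi_W$ is an honest degree $\pm1$ self-equivalence of $S^0$. The $2$-torsion step above -- which is precisely where the ``$-1$'' is lost -- is then vacuous, and the same $\widetilde{K}_{\pin}$-computation instead gives $\kappa(Y_1)\ge\kappa(Y_0)+p=\kappa(Y_0)+\tfrac18 b_2(W)$, which is (ii). The main obstacle is the $K$-theoretic lemma itself: getting the additive constant right -- and improving $-1$ to $0$ in the definite case -- requires a careful analysis of $R(\pin)$, of the Euler classes of $\tC$- and $\tR$-spheres, and of how the full $\pin$-equivariant structure of $\psi_W$ (including the absorbed $\eta$-corrections and the auxiliary representations of the approximation) matches the definition of $\kappa$; organizing the relative Bauer--Furuta invariant, together with the reduction to $b_1(W)=0$, so as to be compatible with this computation is the other substantial step.
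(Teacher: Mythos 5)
Your overall strategy---defining $\kappa$ from $\pin$-equivariant K-theory of the Floer spectrum and then feeding the finite-dimensional approximation of the Seiberg--Witten map on $W$ into a K-theoretic comparison lemma---is exactly the route the paper takes. However, two places where you remain at the level of description are in fact the whole content, and as stated they are not quite right.

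\emph{The definition.} The paper does not work with a ``canonical Bauer--Furuta class'' in $\tK_G(\swf(Y))$ whose divisibility by the Euler class of $\tC$ one measures. Instead, one looks at the image ideal $\i(X) = \im\bigl(\iota^*\colon \tK_G(X) \to \tK_G(X^{S^1}) \cong R(G)\bigr)$ for the Conley index $X = I_\nu$, and sets $k(X) = \min\{k\ge 0 : \exists\, x\in\i(X),\ wx = 2^k w\}$, where $w=\lambda_{-1}(\tC)$. This is then corrected by the level and the $\eta$-invariant term $n(Y,g)$ to obtain $\kappa$. The relevant Euler class in the normalization is $z=\lambda_{-1}(\H)$ (indeed $k\bigl((\tC^t\oplus\H^l)^+\bigr)=l$), so ``divisibility by the Euler class of $\tC$'' is not the right picture; the multiplication by $w$ in the defining condition is there to collapse the ideal structure of $R(G)=\Z[w,z]/(w^2-2w,\,zw-2w)$, and your phrasing inverts the roles of $w$ and $z$.

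\emph{The K-theoretic lemma.} You are right that the heart of (iii) is a lemma comparing $k$ across a $G$-map $f\colon X\to X'$ between spaces of type $\swf$ at levels $2t<2t'$ whose $G$-fixed-point map is a homotopy equivalence, and right that the proof restricts to $S^1\subset\pin$. The precise mechanism (which you leave as a sketch) is: one identifies $(f^{S^1})^*\colon\tK_G((X')^{S^1})\to\tK_G(X^{S^1})$, via Bott classes, as multiplication by some $y\in R(G)$ with $w^t y = w^{t'}$; since $f^{S^1}$ is trivial on $\tK_{S^1}$ when $t<t'$, restriction to $R(S^1)$ forces $y=cw$, and then $w^{t}\cdot cw = w^{t'}$ pins down $c=2^{t'-t-1}$, yielding $k(X)\le k(X')+t'-t$. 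The ``$\widetilde K(\rp^n)$ phenomenon'' you invoke is the relation $w^{m+1}=2^m w$, and it does enter, but not quite where you place it: the additive constant $-1$ in Theorem~\ref{thm:main1}(iii) is \emph{not} produced by this 2-torsion. Lemma~\ref{lem:map2} gives $\kappa(Y_0)+p\le\kappa(Y_1)+q$ outright when $q$ is even; the $-1$ is lost only for odd $q$, by connect-summing with $S^2\times S^2$ to even out the level difference. The 2-torsion is instead what separates Lemma~\ref{lem:map2} from its $K_G$-split improvement Lemma~\ref{lem:map3} (the input to Theorem~\ref{thm:main2}).

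For (ii) your account is correct: when $b_2^+(W)=0$ the levels agree, the fixed-point map is a $G$-equivalence, and the easier Lemma~\ref{lem:map1} gives $\i(X')\subseteq\i(X)$ directly. So the architecture of your argument is the paper's, but without the precise ideal-theoretic definition of $\kappa$ and without the explicit computation of the induced map on fixed-point $K$-theory, the proof does not close.
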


Our main interest is in part (iii), but we listed properties (i) and (ii) in order to compare $\kappa$ with the invariants $\alpha, \beta, \gamma$ constructed in \cite{swfh}, using $\pin$-equivariant Seiberg-\!Witten Floer homology. The invariants $\alpha, \beta, \gamma$ satisfy the analogues of (i) and (ii). Property (iii) seems specific to the invariant $\kappa$, which is constructed from the $\pin$-equivariant Seiberg-\!Witten Floer K-theory of $Y$. The use of $\pin$-equivariant K-theory is to be expected, because it also appeared in Furuta's and Bryan's proofs of \eqref{eq:10_8}.

Roughly, the invariant $\kappa$ is defined as follows. We use the set-up from \cite{Spectrum, swfh}: Pick a metric $g$ on $Y$ and consider a finite dimensional approximation to the Seiberg-\!Witten equations, depending on an eigenvalue cut-off $\nu \gg 0$. The resulting flow has a Conley index $I_{\nu}$, which is a pointed topological space with an action by the group $G=\pin$. After changing $I_{\nu}$ by a suitable suspension if necessary, we can arrange for the $S^1$-fixed point set of $I_{\nu}$ to be equivalent to a complex representation sphere of $G$. We then consider the reduced $G$-equivariant K-theory of $I_{\nu}$. The inclusion of the $S^1$-fixed point set into $I_{\nu}$ induces a map 
\begin{equation}
\label{eq:iota}
\iota^*: \tK_G(I_{\nu}) \to \tK_G(I_{\nu}^{S^1}).
\end{equation}
We have a Bott isomorphism 
$$\tK_G(I_{\nu}^{S^1}) \cong K_G(\pt) = \Z[w, z]/(w^2-2w, wz-2w).$$
We let
$$ k(I_{\nu}) = \min \{k \geq 0 \mid \exists \ x \in \im(\iota^*) \subseteq K_G(\pt), wx = 2^k w \},$$
and obtain $\kappa(Y)$ from $2k(I_{\nu})$ by subtracting a correction term depending on $\nu$ and $g$.

The invariant $\kappa(Y)$ can be computed explicitly in some cases. For example:

\begin{theorem}
\label{thm:brieskorn}
$(a)$ We have $\kappa(S^3)=0$.

$(b)$ Consider the Brieskorn spheres $\Sigma(2,3,m)$ with $\gcd(m,6)=1$, oriented as boundaries of negative definite plumbings. Then:
\begin{align*}
\kappa(\Sigma(2,3,12n-1)) &= 2, \hskip1cm \kappa(\Sigma(2,3,12n-5)) = 1, \\
\kappa(\Sigma(2,3,12n+1)) &= 0, \hskip1cm \kappa(\Sigma(2,3,12n+5)) = 1.
\end{align*}

$(c)$ For the same Brieskorn spheres with the orientations reversed, we have 
\begin{align*}
\kappa(-\Sigma(2,3,12n-1)) &= 0, \hskip1cm \kappa(-\Sigma(2,3,12n-5)) = 1, \\
\kappa(-\Sigma(2,3,12n+1)) &= 0, \hskip1cm \kappa(-\Sigma(2,3,12n+5)) = -1.
\end{align*}
\end{theorem}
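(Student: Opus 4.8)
\emph{Part $(a)$} is immediate from the construction: for $S^3$, with the round metric and $\nu\gg0$, there are no irreducible Seiberg--Witten solutions, so the Conley index $I_\nu$ is simply a $G$-representation sphere, $\iota^*$ in \eqref{eq:iota} is the map induced by the inclusion of its $S^1$-fixed sub-sphere, and the quantity $2k(I_\nu)-c(\nu,g)$ equals $0$ — this is precisely the normalization chosen for the correction term $c=c(\nu,g)$, just as the invariants $\alpha,\beta,\gamma$ of \cite{swfh} are normalized to vanish on $S^3$.

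For parts $(b)$ and $(c)$ the plan is to use the explicit form of the Seiberg--Witten flow on a Seifert fibered homology sphere. Since $\Sigma(2,3,m)$ is Seifert fibered over $S^2(2,3,m)$, the reducible, the irreducible critical submanifolds, their $G$-equivariant gradings, and hence the $G$-homotopy type of $I_\nu$ can all be read off from the analysis of monopoles on Seifert fibered spaces (Mrowka--Ozsv\'ath--Yu) together with the finite-dimensional models of \cite{Spectrum, swfh}. After a suitable suspension, $I_\nu$ is built from the complex representation sphere $I_\nu^{S^1}$ by attaching finitely many cells on which $S^1$ (hence $G$) acts freely off the basepoint, one for each irreducible critical submanifold, with attaching maps constrained by the gradings. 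Although the number of these cells grows with $m$, the part of the cell structure that affects $k(I_\nu)$ is concentrated near $I_\nu^{S^1}$ and depends on $m$ only through $m\bmod 12$; the period is $12$ rather than the $6$ dictated by the Seifert invariants because of the mod-$2$ refinement coming from the $\pin$-action, i.e.\ from the $w$-divisibility condition built into the definition of $k$. The reversed orientations are handled either directly (as $-\Sigma(2,3,m)$ is still Seifert fibered) or via the observation that $I_\nu$ for $-\Sigma(2,3,m)$ is, up to suspension, the $G$-equivariant Spanier--Whitehead dual of $I_\nu$ for $\Sigma(2,3,m)$; since divisibility by $w$ is not a self-dual condition, duality sends each value in $(b)$ to a generally different value, which is why the list in $(c)$ differs.

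The core computation is then to determine $\im(\iota^*)\subseteq K_G(\pt)=\Z[w,z]/(w^2-2w,wz-2w)$ in each of the four residue classes. I would extract it from the cofiber sequence $I_\nu^{S^1}\to I_\nu\to I_\nu/I_\nu^{S^1}$: in the associated long exact sequence of reduced $G$-equivariant K-theory, $\im(\iota^*)$ is the kernel of the connecting homomorphism $K_G(\pt)\cong\tK_G(I_\nu^{S^1})\to\tK^1_G(I_\nu/I_\nu^{S^1})$, and $I_\nu/I_\nu^{S^1}$ — a based space with free $S^1$-action off the basepoint and, by the concentration remark above, a small cell structure — makes this map computable. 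Keeping track of the $\pin$-action, in particular of whether the class carried by the lowest free cell is divisible by $w$, one finds that $\im(\iota^*)$ contains $2^k w$ for an explicit $k=k(m)$ depending only on $m\bmod 12$. One then reads off $\kappa=2k(I_\nu)-c(\nu,g)$, where the correction term for the natural orbifold metric is evaluated by the $G$-index theorem (Atiyah--Patodi--Singer for the negative definite plumbing bounding $\Sigma(2,3,m)$), exactly as the corresponding corrections for $\alpha,\beta,\gamma$ are evaluated in \cite{swfh}; combining the two pieces yields the tabulated integers.

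The main obstacle is the precise identification of that connecting homomorphism, equivalently of the $\pin$-equivariant attaching maps of the first few irreducible critical submanifolds: the $S^1$-equivariant gradings alone only detect the Fr\o yshov-type $d$-invariant and do not separate the four cases, so one genuinely needs the mod-$2$ reductions of the $\pin$-gradings and the $w$-action on the bottom cells, and essentially all of the content of the theorem sits here. To keep this under control I would cross-check against three independent constraints: the known monopole Floer homology and $d$-invariants of the $\Sigma(2,3,m)$; the known values of $\alpha,\beta,\gamma$ from \cite{swfh}; and Theorem~\ref{thm:main1} itself — part (ii), applied to the negative definite spin plumbing cobordism from $S^3$ to $\Sigma(2,3,m)$, gives $\kappa(\Sigma(2,3,m))\ge b_2/8$, which already forces several of the values, part (iii) constrains the rest, and part (i) fixes the parity of every entry.
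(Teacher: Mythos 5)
Your outline aligns with the paper's strategy at the level of inputs (positive scalar curvature for $S^3$; the Mrowka--Ozsv\'ath--Yu description of monopoles on $\Sigma(2,3,m)$; building the Conley index by attaching $G$-free cells to a representation sphere; the cofiber sequence for $\iota^*$; Spanier--Whitehead duality for the reversed orientations). Part $(a)$ is handled the same way as the paper, modulo a small mischaracterization: it is not a ``normalization choice'' that makes $\kappa(S^3)=0$, but a computation — for a positive scalar curvature metric the Conley index is a bare representation sphere, so $\S(S^3)=[(S^0,0,0)]$ and $\kappa$ evaluates to $0$.

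For parts $(b)$ and $(c)$, however, there is a genuine gap, which you yourself flag: you never actually determine the $\pin$-equivariant attaching maps, i.e.\ the divisibility of the relevant class in $\{G_+,S^0\}_G^{\oplus n}\cong\Z^n$, nor the resulting ideal $\i(I_\nu)\subset R(G)$ in each of the four residue classes; and the ``cross-checks'' against $\alpha,\beta,\gamma$ and Theorem~\ref{thm:main1} do not pin down the values (indeed one of the conjectural points of the paper is that $\kappa$ need not equal $\alpha$ in general). The paper closes exactly this gap by a finite, explicit argument: (1) it records from MOY the number and degrees of irreducibles in each case (one reducible plus $2n$ irreducibles forming $n$ $j$-orbits, in degree $\pm 1$ relative to the reducible depending on $m\bmod 12$); (2) it observes that each attaching map lives in $\{G_+,S^0\}_G\cong\Z$ and that the resulting $n$-tuple must be primitive because the ($S^1$-equivariant or even nonequivariant) homology of the Conley index is known from the gluing computation, so the spectrum class is $G$-homotopy equivalent to a wedge of one ``interesting'' piece ($\tG$ or $\tT$ or $S^0$) with free wedge summands; (3) it computes $k$ of $\tG$ and $\tT$ outright in Examples~\ref{ex:G} and~\ref{ex:torus}, uses Equation~\eqref{eq:kwedge} to discard the free summands, and uses Lemma~\ref{lem:dual}, the $\H$-duality $\tG\leftrightarrow\tT$, and the Wirthm\"uller isomorphism for the reversed orientations. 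Without steps (2) and (3) your argument does not determine $\i(I_\nu)$, so it does not yield the stated integers.

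Two smaller points: the ``period 12'' is just the combined behavior of the MOY degree data as $m$ ranges over units mod 12, not a mod-$2$ phenomenon coming from the $w$-divisibility; and the correction term is absorbed cleanly into the formal de-suspension by copies of $\H$ in $\S(Y)$ rather than requiring a separate APS index computation in the proof of this theorem.
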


Observe that $\kappa(-Y)$ is not determined by $\kappa(Y)$. However, we can prove that $\kappa(Y) + \kappa(-Y) \geq 0$. (See Proposition~\ref{prop:duals}.) Furthermore, observe that for the examples appearing in Theorem~\ref{thm:brieskorn}, the values for $\kappa$ coincide with those for the invariant $\alpha$ defined in \cite{swfh}. We conjecture that $\kappa \neq \alpha$ in general; see Section~\ref{sec:alphas} for a discussion of this.

Note also that when $Y_0 = Y_1 = S^3$, the bound in Theorem~\ref{thm:main1} (iii) is weaker than Furuta's bound \eqref{eq:10_8}. We can remedy this by introducing the following concept:
\begin{definition}
\label{def:FloerSplit}
 We say that a homology sphere is {\em Floer $K_G$-split} if the image of the map $\iota^*$ from \eqref{eq:iota} is an ideal of $\Z[w, z]/(w^2-2w, wz-2w)$ of the form $(z^k)$ for some $k \geq 0$. 
\end{definition}

 For example, one can show that the three-sphere $S^3$, the Brieskorn spheres $\pm \Sigma(2,3,12n+1)$ and $\pm \Sigma(2,3,12n+5)$ are all Floer $K_G$-split, but the Brieskorn spheres of the form $\pm \Sigma(2,3,12n-1)$ and $\pm \Sigma(2,3,12n-5)$ are not Floer $K_G$-split. If the starting $3$-manifold in a cobordism $W$ is Floer $K_G$-split, we can strengthen the bound in Theorem~\ref{thm:main1} (iii):

\begin{theorem}
\label{thm:main2}
Suppose that $W$ is a smooth, spin cobordism from $Y_0$ to $Y_1$, with intersection form $p(-E_8) \oplus q\hyp$ and $q > 0$. If $Y_0$ is $K_G$-split, then:
$$\kappa(Y_1) + q \geq \kappa(Y_0) + p + 1.$$
\end{theorem}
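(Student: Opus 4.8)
The plan is to run the same machinery as in the proof of Theorem~\ref{thm:main1}(iii), but to feed it the stronger structural information about $\im(\iota^*)$ supplied by Definition~\ref{def:FloerSplit}.

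\emph{The equivariant map and the K-theory square.} As in the proof of Theorem~\ref{thm:main1}, a finite-dimensional approximation of the Seiberg--Witten equations on $W$ produces a $\pin$-equivariant stable map between the Conley indices of $Y_0$ and $Y_1$, of the schematic form
$$\psi\colon\ \Sigma^{n\tilde{\H}}\,\Sigma^{m\C}\, I_{\nu_0}\ \longrightarrow\ \Sigma^{q\tR}\,\Sigma^{m'\C}\, I_{\nu_1},$$
where $q=b^+(W)$, the multiplicity $n=-\sigma(W)/16=p/2$ of the $\tilde{\H}$-suspension records the (Atiyah--Patodi--Singer corrected) quaternionic index of $\Dirac_W$ (to be read through the gradings of the spectra when $p$ is odd), and the trivial summands $\C^m,\C^{m'}$ are bookkeeping that disappears on $S^1$-fixed points. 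Applying reduced $\pin$-equivariant K-theory and the naturality of the restriction map $\iota^*$ of \eqref{eq:iota} to the pair $(\psi,\psi^{S^1})$ gives a commuting square. Since $\tilde{\H}^{S^1}=0$ while $\tR^{S^1}=\tR$, the fixed-point map $\psi^{S^1}$ is a $\Z/2$-equivariant map of complex representation spheres which, after suitable extra suspensions, may be taken to be the standard inclusion coming from the reducible locus, so $(\psi^{S^1})^*$ is multiplication by a unit. Using Bott periodicity to identify $\tK_{\pin}(I_{\nu_i}^{S^1})\cong K_{\pin}(\pt)=\Z[w,z]/(w^2-2w,wz-2w)$, and noting that the $\tilde{\H}$-suspension multiplies the restriction map by the Euler class $z$ while the $\tR$-suspension multiplies it by a power of $w$, the square collapses to a containment of ideals
$$w^{q/2}\cdot\im(\iota^*_{Y_1})\ \subseteq\ z^{n}\cdot\im(\iota^*_{Y_0}).$$

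\emph{Where the split hypothesis helps.} The image of $\iota^*$ is always an ideal of $K_{\pin}(\pt)$ (because $\iota^*$ is $K_{\pin}(\pt)$-linear), and $k(I_\nu)$ measures the $2$-adic valuation of the image of $\im(\iota^*)$ under the augmentation $w,z\mapsto 2$. The key elementary observation is that, among all ideals with a given value $k$ of this invariant, the \emph{largest} possible intersection with the principal ideal $(w)\cong\Z$ is $2^{k-1}w\Z$ (realized e.g.\ by $(2^{k-1}w)$), while the \emph{smallest} is $2^{k}w\Z$, realized precisely by the power ideal $(z^{k})$. In the general argument one knows only the number $k(I_{\nu_0})$, so one must allow $z^{n}\cdot\im(\iota^*_{Y_0})$ to meet $(w)$ in $2^{\,n+k(I_{\nu_0})-1}w\Z$; intersecting the displayed containment with $(w)$ and using $wz=2w$ (so $z$ acts as $2$ on $(w)$) then forces $k(I_{\nu_1})\geq k(I_{\nu_0})+n-q/2$, which after doubling and subtracting the $\nu$- and $g$-dependent correction terms — precisely the step that turns $n$ and $q/2$ into the topological quantities $p$ and $q$ of $W$, and produces the $+q$ on the left — recovers Theorem~\ref{thm:main1}(iii). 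If instead $Y_0$ is Floer $K_G$-split, then $\im(\iota^*_{Y_0})=(z^{k_0})$, so $z^{n}(z^{k_0})=(z^{\,n+k_0})$ meets $(w)$ in exactly $2^{\,n+k_0}w\Z$, and the identical intersection argument now yields the \emph{one unit stronger} bound $k(I_{\nu_1})\geq k(I_{\nu_0})+n-q/2+1$; after doubling and repackaging this gives $\kappa(Y_1)+q\geq\kappa(Y_0)+p+1$. The hypothesis $q>0$ enters twice: it is the ``$b^+>0$'' condition that makes the stable map $\psi$ well defined, and it ensures the $w$-power in the displayed containment is genuinely present — for $W$ a product cobordism, with $q=0$, the claimed inequality is false.

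\textbf{Main obstacle.} The real work is the precise diagram chase and Euler-class bookkeeping: identifying $\psi^{S^1}$ and arranging by suspension that $(\psi^{S^1})^*$ is a unit; handling the parity subtlety when $q$ is odd, together with the half-integral $\tilde{\H}$-degree shifts that are absorbed into the correction terms; and, above all, establishing cleanly that $(z^{k})$ is the ideal of $2$-adic invariant $k$ whose intersection with $(w)$ is smallest, so that the split hypothesis on $Y_0$ buys exactly one unit in $k$ — equivalently the ``$+2$'' passage from $p-1$ to $p+1$ for $\kappa$ — and no more. Once the displayed containment and this observation are in place, consistency with Theorem~\ref{thm:main1}(iii), the recovery of \eqref{eq:10_8} when $Y_0=Y_1=S^3$, and the assertions for the Brieskorn examples listed after Definition~\ref{def:FloerSplit} should all follow routinely.
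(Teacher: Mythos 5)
Your proof takes essentially the same route as the paper's: the ideal containment you display is precisely what the commutative diagram \eqref{eq:2cd} yields, and the observation that $K_G$-splitness forces $\i(I_{\nu_0}) \cap (w)$ to equal $2^{k_0}\Z w$ rather than possibly the larger $2^{k_0-1}\Z w$ is exactly the content of Lemma~\ref{lem:map3}, which the paper substitutes for Lemma~\ref{lem:map2} in the argument for Theorem~\ref{thm:main1}(iii). Two small slips worth fixing: $(\psi^{S^1})^*$ is multiplication by $w^{q/2}=2^{q/2-1}w$, not a unit (your own Euler-class bookkeeping in fact records this), and the cobordism map exists regardless of $b^+$ --- the hypothesis $q>0$ is needed only to guarantee a strict drop in level so that the argument of Lemma~\ref{lem:map3} applies.
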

Applying this to $Y_0= S^3$, which is Floer $K_G$-split, we obtain:

\begin{corollary}
\label{cor:1bdry}
Let $X$ be a smooth, compact, spin four-manifold with boundary a homology sphere $Y$. If the intersection form of $X$ is $p(-E_8) \oplus q\hyp$ and $q > 0$, then:
$$ q \geq p + 1- \kappa(Y).$$
\end{corollary}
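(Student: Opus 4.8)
The plan is to deduce this directly from Theorem~\ref{thm:main2} by realizing $X$ as a cobordism from $S^3$. First I would remove an open four-ball from the interior of $X$, setting $W = X \setminus \mathring{B}^4$; then $W$ is a smooth four-manifold with $\del W = (-S^3) \sqcup Y$, i.e.\ a cobordism from $S^3$ to $Y$. The spin structure on $X$ restricts to a spin structure on $W$ (which in turn induces the unique spin structure on $S^3$), and since $H_2(W;\Z) \cong H_2(X;\Z)$ the intersection form of $W$ is still $p(-E_8) \oplus q\hyp$ with $q > 0$.

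Next I would invoke the fact, recorded after Definition~\ref{def:FloerSplit}, that $S^3$ is Floer $K_G$-split. Theorem~\ref{thm:main2}, applied with $Y_0 = S^3$ and $Y_1 = Y$, then gives
$$\kappa(Y) + q \geq \kappa(S^3) + p + 1.$$
Finally, using $\kappa(S^3) = 0$ from Theorem~\ref{thm:brieskorn}(a) and rearranging yields the claimed inequality $q \geq p + 1 - \kappa(Y)$.

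I do not expect any real obstacle here: essentially all of the content has already been packaged into Theorem~\ref{thm:main2} together with the computation $\kappa(S^3) = 0$. The only points to be checked are the routine topological observations that excising a ball produces a legitimate spin cobordism without altering the intersection form, and that $S^3$ genuinely has the Floer $K_G$-split property — this last point is precisely what upgrades the output of Theorem~\ref{thm:main1}(iii), which would only give the weaker bound $q \geq p - 1 - \kappa(Y)$, to the sharper statement recorded here.
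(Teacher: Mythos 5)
Your proof is correct and matches the paper's argument exactly: excise a ball from $X$ to obtain a spin cobordism from $S^3$ to $Y$, invoke the fact that $S^3$ is Floer $K_G$-split with $\kappa(S^3)=0$ (established via $\S(S^3)=[(S^0,0,0)]$ in Section~\ref{sec:psc}), and apply Theorem~\ref{thm:main2}. No gaps.
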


When $Y=S^3$, we recover Furuta's $10/8$ Theorem \eqref{eq:10_8}. When $Y=\pm \Sigma(2,3,m)$ with $\gcd(m, 6)=1$, we get  specific bounds by combining Theorem~\ref{thm:brieskorn} (ii) and (iii) with Corollary~\ref{cor:1bdry}. In some of these cases, the bounds given by Corollary~\ref{cor:1bdry} can be obtained more easily by applying the orbifold version of Furuta's theorem to a filling of $X$. However, the bounds we get in the cases $Y=+\Sigma(2,3,12n+1)$ and $Y=+\Sigma(2,3,12n+5)$ appear to be new. We refer to Section~\ref{sec:bounds} for a detailed discussion.

The techniques developed in this paper may also be of interest in studying closed $4$-manifolds. Indeed, Bauer \cite{Bauer} proposed a strategy for proving the $11/8$ conjecture (in the simply connected case) by decomposing the $4$-manifold along homology spheres. Specifically, suppose we had a counterexample to \eqref{eq:11_8}, i.e., a closed, spin $4$-manifold $X$ with intersection form $2r(-E_8) \oplus q\hyp$ and $q < 3r$. By adding copies of $S^2 \times S^2$, we can assume that $q=3r-1$. If $\pi_1(X) =1$, then by a theorem of Freedman and Taylor \cite{FreedmanTaylor} we can find a decomposition 
\begin{equation}
\label{eq:bauer}
X = X_1 \cup_{Y_1} X_2 \cup_{Y_2} \dots \cup_{Y_{r-1}} X_r
\end{equation}
such that:
\begin{itemize}
\item $Y_i$ is an integral homology $3$-sphere for all $i$;
\item For $1 \leq i \leq r-1$, the manifold $X_i$ has intersection form $2(-E_8) \oplus 3\hyp$;
\item $X_r$ has intersection form $2(-E_8) \oplus 2\hyp$.
\end{itemize}
(There are several variations of this; e.g., one could ask for $X_1$ to have intersection form $2(-E_8) \oplus 4\hyp$ and for $X_r$ to have intersection for $2(-E_8) \oplus \hyp$, as in \cite{Bauer}.) If the homology spheres $Y_i$ are arbitrary, Theorem~\ref{thm:main1} (iii) is not sufficient to preclude the existence of such decompositions. On the other hand, Theorem~\ref{thm:main2} has the following immediate consequence: 

\begin{theorem}
\label{thm:cor}
There exists no closed four-manifold $X$ with a decomposition of the type \eqref{eq:bauer}, such that all the homology spheres $Y_i$ are Floer $K_G$-split.
\end{theorem}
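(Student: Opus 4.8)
The plan is to derive Theorem~\ref{thm:cor} as a direct consequence of Theorem~\ref{thm:main2}, by arguing by contradiction. Suppose such a decomposition \eqref{eq:bauer} exists, with all $Y_i$ Floer $K_G$-split (and set $Y_0 = Y_r = S^3$, which is also Floer $K_G$-split by the remark following Definition~\ref{def:FloerSplit}). Each $X_i$, for $1 \le i \le r-1$, is then a smooth spin cobordism from $Y_{i-1}$ to $Y_i$ with intersection form $2(-E_8) \oplus 3\hyp$, i.e. $p=2$, $q=3>0$; and $X_r$ is a spin cobordism from $Y_{r-1}$ to $Y_r = S^3$ with intersection form $2(-E_8) \oplus 2\hyp$, i.e. $p=2$, $q=2>0$. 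Since the source $Y_{i-1}$ of each piece is Floer $K_G$-split, Theorem~\ref{thm:main2} applies to every piece.

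Next I would simply add up the inequalities. Applying Theorem~\ref{thm:main2} to $X_i$ for $1 \le i \le r-1$ gives
$$\kappa(Y_i) + 3 \ge \kappa(Y_{i-1}) + 2 + 1 = \kappa(Y_{i-1}) + 3,$$
so $\kappa(Y_i) \ge \kappa(Y_{i-1})$ for each such $i$; more precisely it is cleaner to keep the inequality in the form $\kappa(Y_i) - \kappa(Y_{i-1}) \ge 0$. Applying Theorem~\ref{thm:main2} to $X_r$ gives
$$\kappa(S^3) + 2 \ge \kappa(Y_{r-1}) + 2 + 1,$$
i.e. $\kappa(Y_{r-1}) \le \kappa(S^3) - 1 = -1$ using Theorem~\ref{thm:brieskorn}(a). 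On the other hand, telescoping the chain $\kappa(Y_0) \le \kappa(Y_1) \le \dots \le \kappa(Y_{r-1})$ together with $\kappa(Y_0) = \kappa(S^3) = 0$ yields $\kappa(Y_{r-1}) \ge 0$. This contradicts $\kappa(Y_{r-1}) \le -1$, so no such decomposition can exist.

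There is essentially no hard step here once Theorem~\ref{thm:main2} is in hand; the only point requiring a word of care is the bookkeeping at the two ends of the chain, namely that one should include $Y_0 = S^3$ as the source of $X_1$ and $Y_r = S^3$ as the target of $X_r$, and invoke that $S^3$ is Floer $K_G$-split and has $\kappa(S^3)=0$. (If instead one uses the variant decomposition with $X_1$ of form $2(-E_8)\oplus 4\hyp$ and $X_r$ of form $2(-E_8)\oplus\hyp$, the same telescoping argument works: the intermediate pieces still give $\kappa(Y_i)\ge\kappa(Y_{i-1})$, the first piece gives $\kappa(Y_1)\ge\kappa(Y_0)+1$, and the last gives the contradiction since $q=1$ forces $1 \ge p+1 = 3$ after summation — indeed one sees the total deficit is the same.) The only genuine subtlety, which is really an input rather than part of this proof, is that Theorem~\ref{thm:main2} requires $q>0$ on each piece, which holds here since every $X_i$ has at least one $\hyp$ summand.
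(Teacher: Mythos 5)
Your argument is correct and matches the paper's proof essentially verbatim: both telescope Theorem~\ref{thm:main2} along the chain of cobordisms (the paper phrases the first step via Corollary~\ref{cor:1bdry}, which is just Theorem~\ref{thm:main2} with $Y_0 = S^3$, exactly as you do) and derive the contradiction $\kappa(Y_{r-1}) \geq 0$ versus $\kappa(Y_{r-1}) \leq -1$. The one point you might make fully explicit is that one applies Theorem~\ref{thm:main2} to $X_1$ and $X_r$ only after removing a ball, so that they genuinely become cobordisms with $S^3$ as one end; this is implicit in your setup and is exactly what the paper does for $X_r$.
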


In view of this result, it would be worthwhile to find topological conditions guaranteeing that a homology sphere is Floer $K_G$-split.

\medskip
\noindent {\bf Acknowledgements.} I would like to thank Mike Hopkins, Peter Kronheimer and Ron Stern for some very enlightening conversations, and the Simons Center for Geometry and Physics (where part of this work was written) for its hospitality. I am also grateful to Jianfeng Lin, Brendan Owens and the referee for comments on a previous version of this paper.

Some of the results in this article have been obtained independently by Mikio Furuta and Tian-Jun Li \cite{FurutaLi}.

\section{Equivariant K-theory}
\label{sec:eqK}

\subsection{Background}
We start by reviewing a few general facts about equivariant K-theory, mostly collected from \cite{Segal}; see also \cite{AtiyahBP}. We assume familiarity with ordinary K-theory, as in \cite{AtiyahK}.

Let $G$ be a compact topological group and $X$ a compact $G$-space. The equivariant K-theory of $X$, denoted $K_G(X)$, is the Grothendieck group associated to $G$-equivariant complex vector bundles on $X$. When $X$ is a point, $R(G) = K_G(\pt)$ is the representation ring of $G$. In general, $K_G(X)$ is an algebra over $R(G)$.

\begin{fact}
\label{fact:zero}
A continuous map $f:X \to X'$ induces a map $f^*:K_G(X') \to K_G(X)$. 
\end{fact}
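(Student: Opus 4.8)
The plan is to exhibit $f^*$ first on the level of vector bundles and then to pass to Grothendieck groups via a universal property. Recall that, by definition, $K_G(X')$ is the Grothendieck group of the abelian monoid $\mathrm{Vect}_G(X')$ of isomorphism classes of $G$-equivariant complex vector bundles on $X'$, with addition given by Whitney sum. As throughout the framework of \cite{Segal}, the map $f$ is understood to be a $G$-equivariant map (the word ``continuous'' should be read as ``continuous $G$-map''); without equivariance the construction below would not produce a $G$-bundle.

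Given a $G$-equivariant complex vector bundle $\pi \colon E \to X'$, I would form the pullback
$$ f^*E = \{ (x,e) \in X \times E \mid f(x) = \pi(e) \}, $$
with bundle projection $(x,e) \mapsto x$ and $G$-action $g\cdot(x,e) = (gx, ge)$. One checks that this action is well defined (here one uses $f(gx) = g f(x)$) and continuous, and that $f^*E \to X$ is a complex vector bundle: the fiber over $x$ is $\{x\}\times E_{f(x)}$, a complex vector space, and local triviality is inherited from that of $E$ by pulling back trivializing open sets along $f$; since $E$ is locally trivial in the $G$-equivariant sense, so is $f^*E$. Thus $f^*E \in \mathrm{Vect}_G(X)$.

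Next I would record the routine compatibilities: an isomorphism $E \cong E'$ of $G$-bundles over $X'$ pulls back to an isomorphism $f^*E \cong f^*E'$ over $X$, and there are natural $G$-bundle isomorphisms $f^*(E\oplus E') \cong f^*E \oplus f^*E'$ and $f^*(X'\times\C) \cong X\times\C$. Hence $E \mapsto f^*E$ descends to a homomorphism of abelian monoids $\mathrm{Vect}_G(X') \to \mathrm{Vect}_G(X)$, and the universal property of the Grothendieck group then yields the desired group homomorphism $f^*\colon K_G(X') \to K_G(X)$. The same reasoning with $\otimes$ in place of $\oplus$ shows $f^*$ is in fact a ring homomorphism, and it is $R(G)$-linear; one also has $(g\circ f)^* = f^*\circ g^*$ and $(\mathrm{id})^* = \mathrm{id}$, so that $K_G$ is a contravariant functor. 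There is no genuine obstacle here; the only point requiring a moment's care is verifying that the $G$-action on $f^*E$ is continuous and that $G$-local triviality is preserved, which is immediate once the definitions are written out precisely.
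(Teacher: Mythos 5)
Your proposal is correct and follows the standard pullback construction; the paper states this as a background fact cited from \cite{Segal} without proof, and your argument is exactly the routine verification one would find there. Your remark that ``continuous map'' should be read as ``continuous $G$-map'' is the right reading of the paper's convention.
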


\begin{fact}
\label{fact:res}
For every subgroup $H \subseteq G$, we have functorial restriction maps $K_G(X) \to K_H(X)$.
\end{fact}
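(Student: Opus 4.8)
The plan is to define the restriction map on equivariant vector bundles and then check that it descends to Grothendieck groups and is natural; the argument is essentially formal, so the work is in making the bookkeeping precise.

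First I would note that if $X$ is a compact $G$-space, then restricting the action along the inclusion $H \hookrightarrow G$ makes $X$ into a compact $H$-space. Given a $G$-equivariant complex vector bundle $E \to X$, let $\operatorname{Res}^G_H E$ denote the same bundle $E \to X$ with the action on the total space restricted to $H$. The bundle projection, being $G$-equivariant, is in particular $H$-equivariant, and each $h \in H$ still acts linearly on fibers covering the action of $h$ on $X$; local triviality is inherited, since $H$-orbits lie inside $G$-orbits. Thus $\operatorname{Res}^G_H E$ is an $H$-equivariant complex vector bundle, and the assignment $E \mapsto \operatorname{Res}^G_H E$ carries isomorphisms to isomorphisms and direct sums to direct sums, as none of the underlying maps are altered.

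Consequently $\operatorname{Res}^G_H$ induces a well-defined homomorphism on the Grothendieck groups of the monoids of isomorphism classes of equivariant bundles, $K_G(X) \to K_H(X)$; it is in fact a ring homomorphism, compatible with the module structures via $R(G) = K_G(\pt) \to K_H(\pt) = R(H)$. For functoriality, let $f : X \to X'$ be a $G$-equivariant map; it is automatically $H$-equivariant, and for any $G$-bundle $E' \to X'$ there is a canonical isomorphism $f^*(\operatorname{Res}^G_H E') \cong \operatorname{Res}^G_H (f^* E')$ of $H$-bundles, because the pullback is built by the same fiber-product construction regardless of the acting group. Hence the square comparing the two maps of the type in Fact~\ref{fact:zero} through the two restriction homomorphisms commutes. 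The same argument gives transitivity: for $K \subseteq H \subseteq G$, the composite $K_G(X) \to K_H(X) \to K_K(X)$ agrees with the restriction $K_G(X) \to K_K(X)$.

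The only step requiring any care — the closest thing to an obstacle in an otherwise routine argument — is verifying that "forgetting part of the group action" genuinely lands in the category of $H$-equivariant bundles in whatever precise sense has been fixed (local triviality and the compactness hypotheses), and that the Grothendieck construction is insensitive to this change of structure group; both are immediate once unwound.
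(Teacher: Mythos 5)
Your argument is correct and is the standard one: restricting the $G$-action on a $G$-equivariant bundle along $H\hookrightarrow G$ yields an $H$-equivariant bundle, this is compatible with isomorphisms and direct sums, hence descends to the Grothendieck groups, and commutes with pullbacks and with further restriction. The paper does not prove this fact itself but simply cites it from Segal's foundational paper on equivariant $K$-theory, where the construction is exactly the one you describe, so there is no divergence in approach to compare.
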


\begin{fact}
\label{fact:free}
If $G$ acts freely on $X$, then the pull-back map $K(X/G) \to K_G(X)$ is a ring isomorphism.
\end{fact}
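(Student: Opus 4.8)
The plan is to upgrade pull-back along the quotient map to an equivalence of categories and then pass to Grothendieck groups. Write $\pi\colon X\to X/G$ for the projection. Since $G$ is compact and acts freely on the compact Hausdorff space $X$, the action is principal: by the slice theorem (or, for a general compact group, a standard averaging argument) every point of $X/G$ has a neighbourhood $U$ over which $\pi$ is trivial, $\pi^{-1}(U)\cong U\times G$ equivariantly and compatibly with the projection to $U$. Pull-back sends a complex vector bundle $E\to X/G$ to $\pi^*E=X\times_{X/G}E\to X$, carrying the $G$-action $g\cdot(x,e)=(gx,e)$, which is legitimate because $\pi(gx)=\pi(x)$; the result is a $G$-equivariant bundle, and $\pi^*$ is additive, respects tensor products, $\pi^*(E_1\otimes E_2)\cong\pi^*E_1\otimes\pi^*E_2$, and sends the trivial line bundle to the trivial line bundle. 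Thus $\pi^*$ induces a ring homomorphism $K(X/G)\to K_G(X)$, which is the map in the statement (Fact~\ref{fact:zero}).

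To invert it I would send a $G$-equivariant bundle $F\to X$ to the quotient $F/G\to X/G$. The one non-formal point is that $F/G$ is again a vector bundle: over a trivializing $U$ as above, a $G$-equivariant bundle on $\pi^{-1}(U)\cong U\times G$ is determined by its restriction to the slice $U\times\{e\}$ --- there is no representation-theoretic data along an orbit because the stabilizers are trivial --- and $F/G|_U$ is canonically that restriction, hence $F/G$ is locally trivial of the same rank as $F$. This assignment is clearly additive and multiplicative as well.

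Next I would verify that the two composites are the identity up to natural isomorphism. For $E$ on $X/G$ the canonical map $(\pi^*E)/G=(X\times_{X/G}E)/G\to E$ is an isomorphism. For an equivariant bundle $F$ on $X$, the evaluation $\pi^*(F/G)\to F$ sending $(x,[f])$ to the unique point of the orbit $[f]$ lying in $F_x$ is well defined and bijective precisely because $G$ acts freely; it is continuous and fibrewise linear, hence an isomorphism of equivariant bundles, and it is natural in $F$. So $\pi^*$ is an equivalence of symmetric monoidal additive categories $\mathrm{Vect}(X/G)\simeq\mathrm{Vect}_G(X)$. Since over a compact base every (equivariant) short exact sequence of bundles splits, the relevant Grothendieck groups are just the group completions of the $\oplus$-monoids of isomorphism classes, so this equivalence descends to a ring isomorphism $K(X/G)\xrightarrow{\cong}K_G(X)$, which is $\pi^*$.

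The main obstacle is the local-triviality input used twice above: one must know that a free action of a compact group on a compact space is principal, so that $\pi$ --- and then each descended bundle $F/G$ --- is locally trivial. Granting that (standard; immediate from the slice theorem for compact Lie groups, which covers the case $G=\pin$ of interest here), everything else is a formal manipulation of the pull-back and quotient functors.
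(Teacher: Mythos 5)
The paper states this as a Fact without proof, citing Segal's foundational paper on equivariant $K$-theory; your argument is essentially Segal's own (Proposition~2.1 there). Your proof is correct: pullback $\pi^*$ and the orbit construction $F\mapsto F/G$ are quasi-inverse equivalences between $\mathrm{Vect}(X/G)$ and the category of $G$-vector bundles on $X$, and since short exact sequences of (equivariant) bundles over a compact base split, this equivalence passes directly to a ring isomorphism of Grothendieck groups. You also correctly isolate the one non-formal input --- that a free action of a compact Lie group on a compact Hausdorff space makes $\pi\colon X\to X/G$ a principal bundle, hence both $\pi$ and the descended bundles $F/G$ are locally trivial --- and note that this covers $G=\pin$. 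The only caveat worth recording is that the paper phrases the section for an arbitrary compact topological group, while your local-triviality step (and indeed the standard statement) really uses that $G$ is a Lie group; that is the hypothesis under which the Fact is actually applied in the paper.
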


\begin{fact}
\label{fact:trivial}
If $G$ acts trivially on $X$, then the natural map $R(G) \otimes K(X) \to K_G(X)$ is an isomorphism of $R(G)$-algebras. 
\end{fact}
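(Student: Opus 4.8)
The plan is to show that the natural map $\alpha\colon R(G)\otimes K(X)\to K_G(X)$ --- which on generators sends $[V]\otimes[F]$ to the class of the $G$-bundle $V\otimes F$, with $G$ acting on the fibre of $V$ and trivially on $F$ and on $X$ --- is an isomorphism of $R(G)$-algebras by exhibiting an explicit inverse coming from the isotypic decomposition of equivariant bundles over a trivial base. Here $V$ ranges over a complete set $\hat G$ of isomorphism classes of irreducible complex $G$-representations. That $\alpha$ is a homomorphism of $R(G)$-algebras is immediate from the definition: on generators $[V\otimes F]\cdot[V'\otimes F']=[(V\otimes V')\otimes(F\otimes F')]$ with $G$ acting diagonally, which is $\alpha$ applied to the product, and $R(G)$-linearity is the special case $F'=\underline{\C}$.

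To build the inverse, let $E\to X$ be a $G$-equivariant complex vector bundle, and for $V\in\hat G$ set $E^{(V)}=\operatorname{Hom}(V,E)^G$, the bundle of $G$-equivariant homomorphisms. Since $G$ acts trivially on $X$ this is a $G$-subbundle of the ordinary bundle $\operatorname{Hom}(V,E)$, carrying the trivial $G$-action. I claim $E^{(V)}$ is locally trivial: it is the image of the fibrewise idempotent on $\operatorname{Hom}(V,E)$ obtained by averaging over $G$ with respect to Haar measure, which varies continuously in $x$; and its fibre dimension $x\mapsto\dim E^{(V)}_x=\langle\chi_{E_x},\chi_V\rangle=\int_G\chi_{E_x}(g)\overline{\chi_V(g)}\,dg$ is a continuous integer-valued function, hence locally constant, so this idempotent-valued family has image a subbundle. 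On each connected component $\sum_{V}(\dim V)\operatorname{rk}(E^{(V)})=\operatorname{rk}(E)$, and after reducing to the case where $X$ is a finite $CW$ complex we conclude only finitely many $E^{(V)}$ are nonzero. We then define $\beta\colon K_G(X)\to R(G)\otimes K(X)$ by $\beta([E]-[E'])=\sum_{V\in\hat G}[V]\otimes\bigl([E^{(V)}]-[(E')^{(V)}]\bigr)$; the formation of $E^{(V)}$ is additive in $E$ and carries $G$-bundle isomorphisms to isomorphisms, so $\beta$ is a well-defined homomorphism of abelian groups.

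It remains to check $\alpha\beta=\operatorname{id}$, $\beta\alpha=\operatorname{id}$, and multiplicativity of $\beta$. For $\alpha\beta$, the canonical evaluation map $\bigoplus_{V\in\hat G}V\otimes E^{(V)}\to E$, $(v,\phi)\mapsto\phi(v)$, is $G$-equivariant and, by complete reducibility of the representations of the compact group $G$, an isomorphism on each fibre, hence a $G$-bundle isomorphism; so $\alpha(\beta[E])=\bigl[\bigoplus_V V\otimes E^{(V)}\bigr]=[E]$. For $\beta\alpha$, one computes fibrewise $(V\otimes F)^{(W)}=\operatorname{Hom}(W,V)^G\otimes F$, which by Schur's lemma is $F$ if $W\cong V$ and $0$ otherwise, giving $\beta\alpha([V]\otimes[F])=[V]\otimes[F]$. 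Multiplicativity of $\beta$ then follows from $\alpha$ being a ring map together with $\alpha,\beta$ being mutually inverse bijections. Hence $\alpha$ is a bijective $R(G)$-algebra homomorphism, i.e. an isomorphism of $R(G)$-algebras.

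The only step that is not pure representation theory applied one fibre at a time is the verification that $E^{(V)}$ is an honest vector bundle --- local constancy of the multiplicity function, and the passage from a continuous family of idempotents to a subbundle --- so that is where I expect to have to be careful; everything else (Schur's lemma, complete reducibility, the evaluation isomorphism, the behaviour of $\operatorname{Hom}(W,V\otimes F)$) is classical and fibrewise. Alternatively, one may simply cite \cite{Segal}, where this appears as a basic structural result, but the argument above is short and self-contained.
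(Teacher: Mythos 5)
Your proof is correct, and it is essentially the standard argument (the paper states this as a Fact without proof, citing Segal; your argument is in substance Segal's Proposition 2.2). The isotypic decomposition via $E^{(V)}=\operatorname{Hom}(V,E)^G$, the evaluation isomorphism $\bigoplus_V V\otimes E^{(V)}\to E$, and the deduction of multiplicativity of $\beta$ from bijectivity plus multiplicativity of $\alpha$ are all handled correctly. Two minor remarks on the one step you yourself flagged: the rank of the image of a continuous family of idempotents is automatically locally constant (it equals the trace), so your character computation, while correct, is not an independent ingredient; and for the finiteness of the sum you do not actually need to retreat to finite CW complexes --- for a compact $X$ with trivial $G$-action, the fibrewise $G$-representation type of $E$ is a locally constant function of $x$ into a discrete set, hence takes finitely many values on the compact $X$, so only finitely many $V\in\hat G$ appear. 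With that cleaned up, the argument works verbatim in the generality the paper needs (compact $G$-spaces).
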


Now suppose that $X$ has a distinguished base point, fixed under $G$. We define the reduced equivariant $K$-theory of $X$, denoted $\tK_G(X)$, as the kernel of the restriction map $K_G(X) \to K_G(\pt)$. 

\begin{fact}
\label{fact:freebased}
If the action of $G$ on $X$ is free away from the basepoint, then the pull-back map $\tK(X/G) \to \tK_G(X)$ is a ring isomorphism.
\end{fact}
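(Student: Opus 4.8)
The plan is to reduce the statement to the free case, Fact~\ref{fact:free}. The key observation is that a based compact $G$-space $X$ on which $G$ fixes the basepoint $x_0$ and acts freely everywhere else is exactly the one-point compactification $X = Z^+$ of the locally compact free $G$-space $Z := X \setminus \{x_0\}$, with $x_0$ in the role of the point at infinity. Under the quotient map $q \colon X \to X/G$ we have $q^{-1}([x_0]) = \{x_0\}$ (the action is free off $x_0$), so $q$ restricts to the projection of the principal $G$-bundle $Z \to Z/G$; moreover this orbit map is proper, because $G$ is compact, hence extends over one-point compactifications, giving $X/G = Z^+/G = (Z/G)^+$.

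First I would identify $\tK_G(X) = \tK_G(Z^+)$ with the compactly supported equivariant K-theory $K_{G,c}(Z)$, and likewise $\tK(X/G) = \tK((Z/G)^+)$ with $K_c(Z/G)$; this is the standard description of reduced K-theory of a one-point compactification (indeed it is the usual definition of $K_{G,c}$). Next, since $Z \to Z/G$ is a principal $G$-bundle, pull-back sets up a bijection between complex vector bundles on $Z/G$ and $G$-vector bundles on $Z$ — this is the mechanism behind Fact~\ref{fact:free}, and by descent it holds over an arbitrary, not necessarily compact, base; since the orbit map is proper, the bijection is compatible with the "trivial outside a compact set" condition. Hence $q^* \colon K_c(Z/G) \to K_{G,c}(Z)$ is an isomorphism. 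Composing the three identifications shows that $q^* \colon \tK(X/G) \to \tK_G(X)$ is an isomorphism, and it is a ring homomorphism automatically, being a pull-back (Fact~\ref{fact:zero}).

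I do not expect any genuinely hard step: the only real work is bookkeeping in the formalism of compactly supported (equivariant) K-theory — setting it up, checking that $\tK_G(Z^+)$ computes it, and checking the identity $Z^+/G = (Z/G)^+$ — all standard (see \cite{Segal, AtiyahBP}) but slightly outside the list of Facts above, so that is where I would be most careful. An alternative route that stays within compact spaces is to excise a $G$-contractible compact neighborhood $V$ of $x_0$: then $\tK_G(X) \cong K_G(X,V) \cong K_G(W, \partial W)$, where $W := \overline{X \setminus V}$ is compact with free $G$-action, so Fact~\ref{fact:free} (in its relative form) applies to $(W,\partial W)$; comparing with the parallel computation of $\tK(X/G)$ via the five lemma yields the same conclusion. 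Either way the essential input is just Fact~\ref{fact:free}, and the only property of $X$ actually used, beyond compactness, is that the fixed basepoint has a $G$-contractible neighborhood — which is automatic for the $G$-CW complexes (Conley indices) that occur later in the paper.
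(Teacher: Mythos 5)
The paper does not prove this statement: it appears as one of a list of background Facts collected from \cite{Segal} and \cite{AtiyahK}, so there is no in-text proof to compare against. Your main argument is correct. Since $X$ is compact Hausdorff, $X \cong Z^+$ with $Z = X \setminus \{x_0\}$; for the compact Lie group $G$ acting on the locally compact Hausdorff space $Z$, the orbit map $Z \to Z/G$ is proper, which gives both $X/G \cong (Z/G)^+$ and the compatibility of the descent equivalence for the principal $G$-bundle $Z \to Z/G$ with compact supports; hence $q^*$ induces an isomorphism of compactly supported $K$-groups, i.e.\ $\tK(X/G) \xrightarrow{\cong} \tK_G(X)$, which is automatically a ring map since it is a pullback.

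Your alternative excision route depends on $(X, V)$ being a good $G$-pair ($G$-cofibered, or $G$-NDR) for $\tK_G(X) \cong K_G(X, V)$ --- a hypothesis you already flag. That is automatic for the finite $G$-CW complexes (Conley indices) to which the Fact is applied in this paper, but not for an arbitrary compact $G$-space. The compactly supported route is therefore the one that proves the Fact at its stated level of generality; in substance it is the relative form of the free-action isomorphism as treated in \cite{Segal}.
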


\begin{fact}
\label{fact:prod}
There is a natural product map $\tK_G(X) \otimes \tK_G(X') \to \tK_G(X \wedge X')$.
\end{fact}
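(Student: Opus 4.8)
The plan is to obtain the product as the reduced incarnation of the external product in unreduced equivariant K-theory. For compact $G$-spaces $X$ and $X'$, write $\pi\colon X\times X'\to X$ and $\pi'\colon X\times X'\to X'$ for the projections. Using the pull-back maps of Fact~\ref{fact:zero} and the tensor product of $G$-equivariant vector bundles (part of the $R(G)$-algebra structure on $K_G$), the assignment $([E],[F])\mapsto [\pi^*E\otimes\pi'^*F]$ defines a map
$$ K_G(X)\times K_G(X')\longrightarrow K_G(X\times X') $$
which is biadditive (indeed $R(G)$-bilinear), hence factors through a homomorphism $\mu\colon K_G(X)\otimes K_G(X')\to K_G(X\times X')$. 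It is natural in $(X,X')$ because pull-back and tensor product of bundles are.

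Next I would record the standard splitting coming from the cofibration $X\vee X'\xrightarrow{\,i\,}X\times X'\xrightarrow{\,p\,}X\wedge X'$ of pointed $G$-spaces. The inclusions $X=X\times\{x_0'\}\hookrightarrow X\times X'$ and $X'=\{x_0\}\times X'\hookrightarrow X\times X'$ are sections of $\pi$ and $\pi'$, so $\pi^*$ and $\pi'^*$ provide a splitting of the restriction map $i^*\colon\tK_G(X\times X')\to\tK_G(X\vee X')\cong\tK_G(X)\oplus\tK_G(X')$. Feeding this into the exact sequence of the cofibration in equivariant K-theory (for which we appeal to \cite{Segal}) shows that the long exact sequence degenerates into a short exact sequence
$$ 0\longrightarrow\tK_G(X\wedge X')\xrightarrow{\,p^*\,}\tK_G(X\times X')\xrightarrow{\,i^*\,}\tK_G(X)\oplus\tK_G(X')\longrightarrow 0 ; $$
in particular $p^*$ is injective.

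Now given $a\in\tK_G(X)$ and $b\in\tK_G(X')$, set $c=\mu(a\otimes b)\in K_G(X\times X')$. Restricting $c$ to the base point $(x_0,x_0')$ yields, by naturality of $\mu$, the class $(a|_{x_0})\cdot(b|_{x_0'})=0$ in $K_G(\pt)$, so $c\in\tK_G(X\times X')$. Restricting $c$ to $X\times\{x_0'\}\cong X$ yields $a\cdot(b|_{x_0'})=0$, where $b|_{x_0'}\in K_G(\pt)=R(G)$ acts through the $R(G)$-algebra structure of $K_G(X)$; likewise the restriction of $c$ to $\{x_0\}\times X'$ vanishes. Hence $i^*c=0$, so by the short exact sequence above there is a unique element $a\cdot b\in\tK_G(X\wedge X')$ with $p^*(a\cdot b)=c$. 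The assignment $(a,b)\mapsto a\cdot b$ is biadditive because $\mu$ and $p^*$ are additive and $p^*$ is injective, so it descends to the desired homomorphism $\tK_G(X)\otimes\tK_G(X')\to\tK_G(X\wedge X')$. Naturality in $(X,X')$ follows from that of $\mu$, the compatibility of the collapse maps with $f\times g$ and $f\wedge g$ for pointed $G$-maps $f,g$, and the injectivity of $p^*$.

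The one non-formal ingredient is the exactness of the cofibration sequence in equivariant K-theory that underlies the splitting above; this is classical (see \cite{Segal}), and in all applications in this paper $X$ and $X'$ are $G$-CW complexes, so no point-set difficulties arise. Everything else — biadditivity of $\mu$, the vanishing of the restrictions of $c$ to $X\vee X'$, and naturality — is immediate from the corresponding properties of pull-back and tensor product of equivariant vector bundles.
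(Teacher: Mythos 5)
The paper offers no proof of this Fact: it is one of the background statements collected in Section~2, attributed to \cite{Segal} (see also \cite{AtiyahBP}), so there is no in-paper argument to compare against. Your construction is the standard one — build the unreduced external product $K_G(X)\otimes K_G(X')\to K_G(X\times X')$ from pull-backs and tensor product, use the split cofibration $X\vee X'\to X\times X'\to X\wedge X'$ (the splitting being $\pi^*\oplus\pi'^*$, which forces the long exact sequence from Fact~\ref{fact:les} to degenerate and makes $p^*$ injective), check that the external product of reduced classes restricts to zero on $X\vee X'$, and lift uniquely along $p^*$. All the steps are correct, including the observation that $c|_{X\times\{x_0'\}}=a\cdot(b|_{x_0'})=0$ because $b$ is reduced, and the deduction of biadditivity and naturality from the injectivity of $p^*$. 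This is precisely the argument the cited sources would give, so your proof is consistent with what the paper is implicitly relying on.
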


If $V$ is any real representation of $G$, let $\Sigma^V X = V^+ \wedge X$ denote the (reduced) suspension of $X$ by $V$. When $V=n\R$ is a trivial representation, we simply write $\Sigma^n X$ for $\Sigma^{n\R} X$.

\begin{fact}
\label{fact:bott}
If $V$ is a complex representation of $G$, we have an equivariant Bott periodicity isomorphism, $\tK_G(X) \cong \tK_G(\Sigma^V X)$. This is given by multiplication with a Bott class $b_V \in \tK_G(V^+)$, under the natural map $ \tK_G(V^+) \otimes \tK_G(X) \to \tK_G(\Sigma^V X)$. The Bott isomorphism is functorial with respect to based continuous maps $f:X \to X'$.
\end{fact}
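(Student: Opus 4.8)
The plan is to produce the Bott class $b_V$ by an explicit algebraic construction, reduce the assertion to periodicity for the trivial one-dimensional representation when the $G$-action is trivial, handle an arbitrary complex representation by either the equivariant projective-bundle formula or Atiyah's index-theoretic proof, and read off functoriality directly from the construction. First I would construct $b_V$. Identify $V^+$ with the quotient $B(V)/S(V)$ of the closed unit ball of $V$ by its boundary sphere, so that $\tK_G(V^+) = K_G(B(V), S(V))$ is computed by difference classes of finite complexes of $G$-equivariant complex vector bundles on $B(V)$ that are exact over $S(V)$. Over $B(V)$ take the Koszul complex
$$0 \To \Lambda^0 V \To \Lambda^1 V \To \cdots \To \Lambda^{\dim_{\C} V} V \To 0,$$
whose differential at a point $v$ is exterior multiplication by $v$. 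Each $\Lambda^k V$ carries the induced $G$-action, the differential is $G$-equivariant since $gv \wedge g\omega = g(v \wedge \omega)$, and the complex is exact for $v \neq 0$, in particular over $S(V)$; this defines $b_V \in \tK_G(V^+)$. The canonical isomorphism $\Lambda^{*}(V \oplus W) \cong \Lambda^{*}V \otimes \Lambda^{*}W$ of Koszul complexes shows that $b_{V \oplus W}$ is carried to $b_V \otimes b_W$ by the external product of Fact~\ref{fact:prod}, so multiplication by a Bott class is compatible with direct sums of representations.

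Next I would prove that $x \mapsto b_V \cdot x$ is an isomorphism $\tK_G(X) \to \tK_G(\Sigma^V X)$. For $V = \C$ with trivial action one has $\Sigma^V X = \Sigma^2 X$, and one runs a standard proof of periodicity (say the Atiyah--Bott computation of $K_G$ of the projective bundle $\mathbb{P}(L \oplus \C)$ for line bundles $L$ over $X$) verbatim in the $G$-category; the only non-formal ingredient is the base case $X = \pt$, i.e. that $\tK_G(S^2)$ is free of rank one over $R(G) = K_G(\pt)$ on $b_{\C}$, which follows from Fact~\ref{fact:trivial} together with the classical computation $\tilde K(S^2) \cong \Z$. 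For a general complex representation $V$ I would argue by one of two routes. Route (a): establish the equivariant projective-bundle formula of \cite{Segal}, that $K_G(\mathbb{P}(V \oplus \C))$ is a free $R(G)$-module with basis $1, H, \dots, H^{\dim_{\C}V}$, $H$ the dual of the tautological line bundle, modulo the single relation determined by $\lambda_{-1}$ of $V \oplus \C$; splitting off $\mathbb{P}(V)$ identifies $\tK_G(V^+)$ with the free rank-one $R(G)$-module on $b_V$, and running the same computation over $X$ with the product bundle $X \times V$ yields the Thom isomorphism $\tK_G(X) \cong \tK_G(\Sigma^V X)$. Route (b): follow Atiyah's proof \cite{AtiyahBP} by constructing an explicit two-sided inverse, sending a complex of bundles on $X \times B(V)$ that is exact on $X \times S(V)$ to the index of the associated family of Dolbeault-type elliptic operators along the fibers of $X \times V \to X$; the two compositions are computed to be the identity using the multiplicativity above and the single normalization that the index of the Dolbeault operator on $\C$ pairs to $1$ with $b_{\C}$. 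I would take route (b), since it treats an arbitrary compact group $G$ and an arbitrary complex $V$ uniformly.

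Finally, functoriality: the external product of Fact~\ref{fact:prod} is natural and $b_V$ is pulled back from a point, so for any based $G$-map $f : X \to X'$ the square relating $f^{*}$, $(\operatorname{id}_{V^+} \wedge f)^{*}$ and multiplication by $b_V$ commutes, which is exactly the asserted naturality. The main obstacle is the periodicity statement itself — equivalently, that $b_{\C}$ generates $\tK_G(S^2)$ over $R(G)$, or that $b_V$ is a Thom class — which is not a formal consequence of Facts~\ref{fact:zero}--\ref{fact:prod} and requires genuine input, either the structure of $K_G$ of a projective bundle or the index theory of the Dolbeault complex. Once that input is in place, the equivariant bookkeeping, the multiplicative reduction to generating representations, and the functoriality are all routine.
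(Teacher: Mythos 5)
The paper does not prove Fact~\ref{fact:bott}; it is stated as background material, collected ``mostly from \cite{Segal}; see also \cite{AtiyahBP}.'' Your sketch --- constructing $b_V$ via the Koszul/Thom complex, checking multiplicativity $b_{V\oplus W}=b_V\cdot b_W$, and then proving the isomorphism either by Segal's equivariant projective-bundle formula or by Atiyah's index-theoretic elliptic-operator argument, with functoriality falling out of naturality of the external product --- is precisely the standard argument given in those references, so it matches the proof the paper is implicitly invoking.
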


\begin{fact}
\label{fact:bott2}
Let $V$ be a complex representation of $G$. The composition of the Bott isomorphism with the map 
$\tK_G(\Sigma^V X) \to \tK_G(X)$ induced by the inclusion $X \hookrightarrow \Sigma^V X$ is a map $\tK_G(X) \to \tK_G(X)$ given by multiplication with the K-theoretic Euler class
$$ \lambda_{-1}(V) = \sum_d (-1)^d [\Lambda^i V] \in R(G).$$
\end{fact}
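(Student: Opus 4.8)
The plan is to unwind the two maps making up the composite and then to invoke the standard description of the Bott class in terms of a Koszul complex. First I would pin down the inclusion $X \hookrightarrow \Sigma^V X$: since $\Sigma^V X = V^+ \wedge X$, this map is $j \wedge \id_X$, where $j : S^0 \hookrightarrow V^+$ is the based map carrying the non-basepoint of $S^0$ to the origin $0 \in V$. By Fact~\ref{fact:bott}, the Bott isomorphism sends $x \in \tK_G(X)$ to the image of $b_V \otimes x$ under the product pairing $\tK_G(V^+) \otimes \tK_G(X) \to \tK_G(V^+ \wedge X)$ of Fact~\ref{fact:prod}.

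Next I would use the naturality of that product pairing with respect to the pair $(j, \id_X)$. This yields a commuting square comparing $\tK_G(V^+) \otimes \tK_G(X) \to \tK_G(V^+ \wedge X)$ with $\tK_G(S^0) \otimes \tK_G(X) \to \tK_G(S^0 \wedge X) = \tK_G(X)$, so that $(j \wedge \id_X)^*(b_V \otimes x) = j^*(b_V) \otimes x$. Under the canonical identification $\tK_G(S^0) \cong K_G(\pt) = R(G)$, the latter pairing is simply the $R(G)$-module structure on $\tK_G(X)$. Hence the composite $\tK_G(X) \to \tK_G(X)$ is multiplication by the single element $j^*(b_V) \in R(G)$, that is, by the restriction of the Bott class to the origin of $V$.

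It remains to identify $j^*(b_V)$ with $\lambda_{-1}(V) = \sum_i (-1)^i [\Lambda^i V]$, and this is the only step that is not purely formal. Here I would represent $b_V \in \tK_G(V^+)$ by the equivariant Koszul complex on $V$: over $v \in V$ this is $0 \to \Lambda^0 V \to \Lambda^1 V \to \cdots \to \Lambda^{\dim_\C V} V \to 0$ with differentials given by exterior multiplication by $v$; it is acyclic away from the origin and hence defines a class on the Thom space $V^+$. Restricting to $v = 0$ kills all the differentials, so the class on the fibre over the origin is exactly the formal alternating sum $\sum_i (-1)^i [\Lambda^i V] = \lambda_{-1}(V) \in R(G)$. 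Assembling the three steps gives the claim. The only real care needed is bookkeeping: verifying that the identification $\tK_G(S^0) \cong R(G)$ and the orientation/conjugation conventions built into the chosen model for $b_V$ match the sign convention in the statement. (Alternatively, one can reduce to this computation using the multiplicativity $e(V \oplus W) = e(V)\,e(W)$ of the resulting Euler class together with the case where $G$ acts trivially, as in \cite{Segal}.)
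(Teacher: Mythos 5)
The paper does not prove this statement: it appears as one of the ``Facts'' in Section~\ref{sec:eqK}, which the paper explicitly describes as a review of standard material ``mostly collected from \cite{Segal}; see also \cite{AtiyahBP}.'' So there is no in-paper proof to compare against. Taking your argument on its own merits, it is correct and is exactly the standard proof one would give. The reduction of the composite to multiplication by $j^*(b_V) \in R(G)$ via naturality of the smash-product pairing is the right formal step, and the identification of $j^*(b_V)$ with $\lambda_{-1}(V)$ by restricting the Koszul-complex model of the Bott/Thom class to the origin is the content of the statement. You are also right to flag that the only real care is in conventions: some references build the Thom class from $\Lambda^\bullet V^*$ rather than $\Lambda^\bullet V$ (and likewise differ on whether the Koszul differential is exterior multiplication or contraction by $v$), which can shift the Euler class by a conjugate; one should check that the chosen model matches the sign/conjugation convention in the displayed formula. (As a minor remark, the index in the paper's displayed sum $\sum_d (-1)^d[\Lambda^i V]$ is a typo; it should read $\sum_i (-1)^i [\Lambda^i V]$, as you have it.)
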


Bott periodicity for $V=\C \cong \R^2$ says that $\tK_G(\Sigma^{2} X) \cong \tK_G(X)$. For $i \in \Z$, we can define the reduced K-cohomology groups of $X$ by
$$ \tK_G^i(X) = \begin{cases}
\tK_G(X) & \text{if} \ i \text{ is even}, \\
\tK_G(\Sigma X) & \text{if} \ i \text{ is odd}.
\end{cases}$$

\begin{fact}
\label{fact:les}
If $A \subseteq X$ is a closed $G$-subspace (containing the base point), there is a long exact sequence:
\begin{equation}
\label{eq:pair}
 \to \tK_G^i(X \amalg_A CA) \to \tK^i_G(X) \to \tK^i_G(A) \to \tK_G^{i+1}(X \amalg_A CA) \to \dots 
 \end{equation}
 where $CA$ denotes the cone on $A$.
\end{fact}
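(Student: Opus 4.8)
The plan is to reduce the statement to the exactness of the standard cofiber sequence in reduced equivariant $K$-theory. The space $X \amalg_A CA$ is, up to $G$-homotopy equivalence, the mapping cone of the inclusion $j: A \hookrightarrow X$; since $A$ is a closed $G$-subspace containing the basepoint and the inclusion is a cofibration, collapsing the cone $CA$ gives a based $G$-homotopy equivalence $X \amalg_A CA \simeq X/A$. So it suffices to produce the six-term (in fact infinite, via Bott periodicity) exact sequence
$$
\cdots \to \tK^i_G(X/A) \xrightarrow{q^*} \tK^i_G(X) \xrightarrow{j^*} \tK^i_G(A) \to \tK^{i+1}_G(X/A) \to \cdots,
$$
where $q: X \to X/A$ is the quotient map. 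First I would establish exactness at $\tK^i_G(X)$: the composite $q^* \circ j^*$ vanishes because $j$ factors through a point after composing with $q$... more precisely because $A \hookrightarrow X \to X/A$ is the constant map, so by Fact~\ref{fact:zero} the composite on $K$-theory is zero (the reduced theory kills the constant map's image). The inclusion $\ker j^* \subseteq \im q^*$ is the substantive point and follows from the clutching/extension description of equivariant vector bundles: a bundle on $X$ trivial on $A$ (equivariantly, compatibly with the $G$-structure) descends to $X/A$, using that $A \hookrightarrow X$ is a $G$-cofibration so the trivialization can be pushed into a collar.

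Next I would extend this three-term exact sequence to the left using the Puppe sequence. Iterating the cofiber construction, $X \to X/A$ has cofiber $G$-homotopy equivalent to $\Sigma A$, whose cofiber is $\Sigma X$, and so on; applying the three-term exactness to each stage of the Puppe sequence
$$
A \to X \to X/A \to \Sigma A \to \Sigma X \to \Sigma(X/A) \to \cdots
$$
and splicing yields a long exact sequence involving $\tK_G$ of all the iterated suspensions. Then I invoke Bott periodicity (Fact~\ref{fact:bott} with $V = \C$) to identify $\tK_G(\Sigma^2 Z) \cong \tK_G(Z)$ naturally, which both makes the sequence two-periodic and lets me reindex the suspended terms as $\tK^{i+1}_G$, exactly matching the statement. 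The connecting map $\tK^i_G(A) \to \tK^{i+1}_G(X/A)$ is the composite of $j^*$ for the inclusion $\Sigma A \hookrightarrow$ (cofiber of $X \to X/A$) with the Bott identification.

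The main obstacle, and the step I would spend the most care on, is the exactness at the quotient in the equivariant setting — i.e., that an equivariant vector bundle on $X$ restricting to a trivial (more precisely, pulled-back-from-a-point) equivariant bundle on $A$ actually comes from $X/A$. In the non-equivariant case this is routine, but equivariantly one must know that a $G$-equivariant trivialization over $A$ extends to a $G$-invariant collar neighborhood, which uses the existence of $G$-invariant tubular/collar neighborhoods for the cofibration $A \hookrightarrow X$ — valid here since $G$ is compact and all spaces in sight are the compact $G$-ANRs (Conley indices) we care about. I would cite the corresponding development in Segal's paper \cite{Segal} rather than reprove it, since the remaining bookkeeping — naturality of the connecting maps, compatibility of Bott periodicity with the Puppe maps — is formal.
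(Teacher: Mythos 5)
The paper does not prove Fact~\ref{fact:les}; it is one of the background ``Facts'' collected from Segal's paper (and Atiyah--Bott--Patodi), so there is no in-paper proof to compare against. Your argument is the standard one and is essentially the proof found in \cite{Segal}: establish three-term exactness at $\tK_G(X)$ by the clutching/extension argument, extend via the Puppe sequence, and fold the resulting half-infinite sequence into a two-periodic one with Bott periodicity. Two small remarks. First, a bookkeeping slip: in proving exactness at $\tK^i_G(X)$ you write ``the composite $q^* \circ j^*$,'' but the vanishing composite is $j^* \circ q^* = (q \circ j)^*$, since $q \circ j : A \to X/A$ is the constant map; your surrounding explanation makes clear you understand this, so it is only a notational transposition. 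Second, you begin by collapsing $CA$ to replace $X \amalg_A CA$ with $X/A$, which requires $A \hookrightarrow X$ to be a $G$-cofibration; the statement as given in the paper makes no such assumption, and is formulated with the mapping cone $X \amalg_A CA$ precisely so that the Puppe sequence applies without it. This is harmless in all applications in the paper (everything is a finite $G$-CW pair), but if you want the proof to match the stated generality you should run the extension argument directly on the mapping cone rather than passing first to $X/A$. You correctly flag that the substantive equivariant input is that a $G$-bundle over $X$ equivariantly trivial on $A$ descends, which uses a $G$-invariant collar and compactness of $G$; that is indeed the only non-formal step.
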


A quick consequence of Fact~\ref{fact:les} is:
\begin{fact}
\label{fact:wedge}
If $X$ is a wedge sum $A \vee B$, then $\tK_G(X) \cong \tK_G(A) \oplus \tK_G(B)$.
\end{fact}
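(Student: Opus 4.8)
The plan is to apply the long exact sequence of Fact~\ref{fact:les} with $X = A \vee B$ and the closed $G$-subspace taken to be $A$ (which contains the base point), and then to exploit the retraction $A \vee B \to A$ that collapses $B$.

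First I would identify the mapping cone $X \amalg_A CA$. Since the inclusion $A \hookrightarrow A \vee B$ is a based $G$-cofibration — as it is for all the spaces relevant here, e.g.\ finite $G$-CW complexes and their suspensions — the space $X \amalg_A CA$ is $G$-homotopy equivalent, through based maps, to the quotient $X/A = (A \vee B)/A$, which is $B$. Substituting this into \eqref{eq:pair} yields
$$ \cdots \to \tK_G^i(B) \to \tK_G^i(X) \xrightarrow{\ \iota^*\ } \tK_G^i(A) \xrightarrow{\ \delta\ } \tK_G^{i+1}(B) \to \cdots, $$
where $\iota^*$ is the map induced by the inclusion $\iota : A \hookrightarrow X$.

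Next I would observe that the retraction $r : A \vee B \to A$ collapsing $B$ to the base point satisfies $r \circ \iota = \operatorname{id}_A$, so that $\iota^* \circ r^*$ is the identity on $\tK_G^i(A)$. Hence $\iota^*$ is (split) surjective in every degree, which forces every connecting homomorphism $\delta$ to vanish. The long exact sequence therefore decomposes into short exact sequences
$$ 0 \to \tK_G^i(B) \to \tK_G^i(X) \xrightarrow{\ \iota^*\ } \tK_G^i(A) \to 0 $$
that are split by $r^*$, giving $\tK_G^i(X) \cong \tK_G^i(A) \oplus \tK_G^i(B)$; taking $i = 0$ is the assertion.

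There is no real obstacle here; the only point requiring a word of care is the identification $X \amalg_A CA \simeq B$, which rests on $A \hookrightarrow A \vee B$ being a cofibration in the category of based $G$-spaces under consideration. Alternatively one could bypass Fact~\ref{fact:les} entirely and build the isomorphism by hand from the two collapse maps $A \vee B \to A$, $A \vee B \to B$ and the two inclusions, but routing the argument through the long exact sequence is the shortest path.
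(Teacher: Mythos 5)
Your argument is exactly the one the paper has in mind when it says Fact~\ref{fact:wedge} is ``a quick consequence of Fact~\ref{fact:les}'': apply the long exact sequence to the pair $(A \vee B, A)$, identify the cofiber with $B$, and split the resulting sequence using the retraction onto $A$. The proof is correct and matches the paper's intended route.
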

  
The {\em augmentation ideal} $\a \subseteq R(G)$ is defined as the kernel of the forgetful map (augmentation homomorphism) $R(G) \cong K_G(\pt)  \to K(\pt) \cong \Z$. The following fact is closely related to the Atiyah-Segal completion theorem; see \cite[proof of Proposition 4.3]{AtiyahSegal} and \cite[3.1.6]{AtiyahK}: 
 
 \begin{fact}
 \label{fact:complete}
If $X$ is a finite, based $G$-CW complex and the $G$-action is free away from the basepoint, then the elements of the augmentation ideal $\a \subset R(G)$ act nilpotently on $\tK_G(X) \cong \tK(X/G)$.
 \end{fact}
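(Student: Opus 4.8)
The plan is to reduce the statement to the non-equivariant Atiyah--Segal completion theorem applied to the classifying space, together with the identification $\tK_G(X) \cong \tK(X/G)$ from Fact~\ref{fact:freebased}. First I would recall that since the $G$-action on $X$ is free away from the (fixed) basepoint, the quotient $X/G$ is a finite based CW complex, and Fact~\ref{fact:freebased} gives a ring isomorphism $\tK_G(X) \cong \tK(X/G)$ compatible with the $R(G)$-module structure, where $R(G)$ acts on $\tK(X/G)$ through the composite $R(G) \to \tK(BG)^{\wedge} \to \End(\tK(X/G))$; concretely, a representation $\rho$ of $G$ acts on $\tK(X/G)$ by tensoring with the associated bundle $(X \times_G \rho) \to X/G$, pulled back from the universal such bundle on $BG$.

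The key point is then a finiteness/nilpotence statement. Since $X$ is a finite $G$-CW complex with free action off the basepoint, there is a $G$-map $X \to S^0 * (EG)_+$ — more concretely, the classifying map for the free action lands the ``non-basepoint part'' of $X$ in some finite skeleton $E_n G$ of a model for $EG$, so $X/G$ maps to a finite skeleton of $BG$. Hence every class in $\tK(X/G)$ is pulled back from $K^*(B_nG)$ for some finite $n$. Now I invoke the standard fact (this is \cite[3.1.6]{AtiyahK} and the argument in \cite[proof of Prop.~4.3]{AtiyahSegal}) that the augmentation ideal $\a \subset R(G)$ acts nilpotently on the reduced K-theory of any finite skeleton of $BG$: this is exactly the content of the Atiyah--Segal theorem combined with the fact that $\tK(B_nG)$ is killed by a power of $\a$ because $K^*(BG) = R(G)^{\wedge}_{\a}$ and the tower of skeleta has surjective (hence, after a shift, pro-nilpotent) structure maps. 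Pulling back along $X/G \to B_nG$, the same power of $\a$ annihilates multiplication by $\a$-elements on $\tK(X/G)$, i.e.\ the elements of $\a$ act nilpotently on $\tK_G(X)$.

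An alternative, more self-contained route avoids $BG$ altogether and runs by induction on the $G$-CW structure of $X$. The base case is a single free cell $G \times D^k$ with its boundary collapsed, whose reduced equivariant K-theory is $\tK_G(G_+ \wedge S^k) \cong \tK(S^k)$ with $R(G)$ acting through the augmentation $R(G) \to \Z$, so $\a$ acts as zero. For the inductive step, attaching a free cell gives a cofiber sequence of based $G$-spaces, hence (by Fact~\ref{fact:les}) a long exact sequence of $R(G)$-modules relating $\tK_G$ of the smaller complex, the bigger complex, and a wedge of free cells; since $\a$ acts nilpotently on the two outer terms (by induction and the base case), a standard diagram chase with the exact sequence shows $\a^{N_1+N_2}$ kills the middle term. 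Finiteness of the $G$-CW structure is what makes the exponent uniform.

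I expect the main obstacle to be purely expository rather than mathematical: making precise the $R(G)$-module structure on $\tK(X/G)$ under the isomorphism of Fact~\ref{fact:freebased} and checking it agrees with the one pulled back from $BG$, so that the cited completion-theorem input applies verbatim. The inductive argument sidesteps this at the cost of bookkeeping the nilpotence exponents through the long exact sequences; either way the real content is entirely in the references, so in the paper I would simply give the one-paragraph reduction and cite \cite{AtiyahSegal, AtiyahK}.
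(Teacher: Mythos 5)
The paper gives no proof here; it only points to \cite[proof of Prop.\ 4.3]{AtiyahSegal} and \cite[3.1.6]{AtiyahK}, and your first argument is essentially the one those references contain, so the approach is the intended one. However, the sentence ``every class in $\tK(X/G)$ is pulled back from $K^*(B_nG)$'' is false and should be dropped: for $X = G_+ \wedge S^2$ one has $X/G \cong S^2$ and $\tK(S^2) \cong \Z$, while the classifying map $S^2 \to BG$ of the trivial bundle is nullhomotopic, so nothing nonzero is pulled back from any skeleton. Fortunately the claim is not needed: what your argument really uses is that the \emph{$R(G)$-module structure}, not the classes themselves, factors through the ring map $R(G) \to K(B_nG) \to K(X/G)$ coming from the classifying map of the free part (cf.\ Fact~\ref{fact:free2}). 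Since this ring map carries $\a$ into $\tK(B_nG)$, and $\tK(B_nG)$ is a nilpotent ideal by \cite[3.1.6]{AtiyahK} because $B_nG$ is a finite CW complex, some power of $\a$ is sent to zero in $K(B_nG)$, hence to zero in $K(X/G)$, hence acts by zero on $\tK(X/G)$. The aside about the tower of skeleta being ``pro-nilpotent'' is an unnecessary detour; the nilpotence is just Atiyah's finiteness result. Your second, inductive argument is correct and has the merit of being self-contained; the base case (free cell, trivial $G$-bundle, so $\a$ acts by zero) and the additive bookkeeping of nilpotence exponents through the long exact sequence of Fact~\ref{fact:les} are standard, and the finiteness of the $G$-CW structure is indeed what yields a uniform exponent.
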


One can also define the equivariant $K$-groups when $X$ is only locally compact (see \cite{Segal}), e.g., for the classifying bundle $EG$. The following is a consequence of the Atiyah-Segal completion theorem; see \cite[Proposition 4.3]{AtiyahSegal} or \cite[Section XIV.5]{MayBook}: 

\begin{fact}
\label{fact:EG}
The ring $K_G(EG) \cong K(BG)$ is isomorphic to $R(G)^{\wedge}_{\a}$, the completion of $R(G)$ at the augmentation ideal. The projection $EG \to \pt$ induces a map $K_G(\pt)\to K_G(EG)$, which corresponds to the natural map from $R(G)$ to its completion.
\end{fact}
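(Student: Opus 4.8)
The plan is to obtain this from the Atiyah--Segal completion theorem, which I will recall in outline, since the statement bundles together a ring isomorphism and the identification of the map induced by $EG\to\pt$. Fix a model for $EG$ as an increasing union $EG=\bigcup_n EG_n$ of \emph{compact} free $G$-CW complexes (for instance iterated joins of $G$), so that each $BG_n:=EG_n/G$ is a finite CW complex. Fact~\ref{fact:free} applies to each compact $EG_n$ and yields ring isomorphisms $K_G(EG_n)\cong K(BG_n)$, compatible in $n$; passing to the limit --- using that (following \cite{Segal}) $K_G(EG)$ is the appropriate inverse limit of the $K_G(EG_n)$, and that $K(BG)=\varprojlim_n K(BG_n)$ up to a $\varprojlim^1$ term which vanishes by Mittag--Leffler --- gives $K_G(EG)\cong K(BG)$. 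So the substance is the isomorphism $K(BG)\cong R(G)^\wedge_\a$.

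For that, the projections $EG_n\to\pt$ induce, by Fact~\ref{fact:zero}, compatible ring maps $\varphi_n\colon R(G)=K_G(\pt)\to K_G(EG_n)\cong K(BG_n)$, and the point is to control $\ker\varphi_n$. First, $\a^{N_n}\subseteq\ker\varphi_n$ for some $N_n$: writing $K_G(EG_n)=\tK_G\bigl((EG_n)_+\bigr)$ with a disjoint basepoint, the $G$-action on $(EG_n)_+$ is free away from the basepoint, so Fact~\ref{fact:complete} says the finitely generated ideal $\a$ acts nilpotently, i.e.\ $\a^{N_n}$ annihilates $K_G(EG_n)$ and in particular lies in $\ker\varphi_n$. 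Second --- and this is the substance of the completion theorem --- one needs the reverse estimate: for each $m$ there is $n(m)$ with $\ker\varphi_n\subseteq\a^m$ for all $n\geq n(m)$, so that the pro-systems $\{K(BG_n)\}_n$ and $\{R(G)/\a^n\}_n$ become pro-isomorphic. Granting both, taking $\varprojlim_n$ yields $K(BG)\cong\varprojlim_n R(G)/\a^n=R(G)^\wedge_\a$ (no $\varprojlim^1$ obstruction, the transition maps being surjective), and by construction the composite of $\varphi_n$ with the map into the limit is exactly the canonical map $R(G)\to R(G)^\wedge_\a$; since that composite is the map $K_G(\pt)\to K_G(EG)$ induced by $EG\to\pt$, the second assertion follows.

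The genuinely hard step is the reverse estimate above: that the geometric (skeletal) filtration of $K(BG)$ is cofinal with the $\a$-adic one. In a self-contained account I would import this from Atiyah--Segal \cite{AtiyahSegal}; alternatively, since only $G=\pin$ is needed in this paper, one can verify it by hand from the explicit structure of $R(\pin)$ and $K^*(B\pin)$ --- restrict along the maximal torus $S^1\subset\pin$, use $K^*(BS^1)=\Z[[t]]$, handle the remaining $\Z/2$ via a transfer/Gysin argument, and compare filtrations directly. Everything else --- the naturality of Fact~\ref{fact:free} and Fact~\ref{fact:zero} that links the two assertions, and the $\varprojlim^1$ bookkeeping --- is formal.
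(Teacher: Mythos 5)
Your proposal is correct and matches the paper's approach: the paper treats this as a standard consequence of the Atiyah--Segal completion theorem and simply cites \cite[Proposition 4.3]{AtiyahSegal} and \cite[Section XIV.5]{MayBook}, which is exactly what your argument reduces to after the formal bookkeeping with the Milnor filtration of $EG$, Facts~\ref{fact:free} and \ref{fact:complete}, and the $\varprojlim^1$ considerations. Your expansion correctly isolates the nontrivial input (the cofinality of the skeletal and $\a$-adic filtrations) and correctly notes that the remainder is formal; one minor point worth flagging is that the relevant $\varprojlim^1$ term sits on the \emph{odd} $K$-groups $K^{-1}(BG_n)$ via the Milnor sequence, and its vanishing is most cleanly seen as part of the pro-isomorphism statement of Atiyah--Segal rather than by a direct Mittag--Leffler check on even $K$-groups.
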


The following is an immediate corollary of Fact~\ref{fact:EG}:

\begin{fact}
\label{fact:free2}
Let $X$ be a compact space with a free $G$-action. Let $Q = X/G$ and denote by $\pi$ the projection $ X \to \pt$. The induced map $\pi^*$ from $R(G) \cong K_G(\pt)$ to $K(Q) \cong K_G(X)$ can also be described as the composition 
$$ R(G) \to R(G)^{\wedge}_{\a} \cong K(BG) \to K(Q),$$
where the first map is completion and the second is induced by the classifying map $Q \to BG$ for $X$.   
\end{fact}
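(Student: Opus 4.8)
The plan is to deduce the statement directly from Fact~\ref{fact:EG}, using only the universal property of $EG$, functoriality of equivariant K-theory, and the naturality of the free-action isomorphism of Fact~\ref{fact:free}; there is essentially nothing to compute.

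First, since $X$ carries a free $G$-action it is the total space of a principal $G$-bundle $X \to Q$, which is classified by a $G$-equivariant map $c\colon X \to EG$, unique up to $G$-homotopy; it descends to the classifying map $\bar c\colon Q \to BG$. Because both $c$ and $EG \to \pt$ are $G$-equivariant, the projection $\pi\colon X \to \pt$ factors on the nose as $X \xrightarrow{\,c\,} EG \to \pt$, hence $\pi^* = c^* \circ (EG \to \pt)^*$ as maps on $K_G$.

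Next, Fact~\ref{fact:EG} identifies $(EG \to \pt)^*\colon K_G(\pt) \to K_G(EG)$ with the composite $R(G) \to R(G)^{\wedge}_{\a} \cong K(BG)$, the first arrow being completion at the augmentation ideal. It then remains only to identify $c^*\colon K_G(EG) \to K_G(X)$, under the pull-back isomorphisms $K(BG) \xrightarrow{\sim} K_G(EG)$ and $K(Q) \xrightarrow{\sim} K_G(X)$ of Fact~\ref{fact:free}, with $\bar c^*\colon K(BG) \to K(Q)$. This is just naturality of those isomorphisms with respect to the $G$-map $c$ lying over $\bar c$: an equivariant bundle on $EG$ pulled back along $c$ corresponds to the associated bundle on $BG$ pulled back along $\bar c$, and the ring structures match. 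Splicing the three steps gives that $\pi^*$ is the composition $R(G) \to R(G)^{\wedge}_{\a} \cong K(BG) \xrightarrow{\bar c^*} K(Q)$, which is exactly the asserted description.

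The one point I would take care to address — the only place where the argument is not purely formal — is that $EG$ is not compact, so Facts~\ref{fact:free} and~\ref{fact:EG} must be read in the locally-compact formulation of equivariant K-theory of \cite{Segal}; alternatively one filters $EG$ by the compact finite joins $G * \cdots * G$ in Milnor's model, applies the compact statements (together with the continuity of $K$ and $K_G$) on each stage, and passes to the inverse limit. With that dispensed with, the rest is straightforward diagram-chasing.
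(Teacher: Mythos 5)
Your argument is correct and is precisely how the paper intends the reader to see this as an ``immediate corollary'' of Fact~\ref{fact:EG}: factor $\pi$ through the classifying $G$-map $c\colon X \to EG$, apply Fact~\ref{fact:EG} to the leg $EG \to \pt$, and use naturality of the free-action isomorphism of Fact~\ref{fact:free} to identify $c^*\colon K_G(EG) \to K_G(X)$ with $\bar c^*\colon K(BG) \to K(Q)$. Your caveat about the non-compactness of $EG$, resolved via Segal's locally compact formulation or the finite-join filtration, addresses the only genuine subtlety, and is handled correctly.
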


\subsection{Pin(2)-equivariant K-theory}
From now on we specialize to the group $G = \pin$. If $\H = \C \oplus j \C$ denotes the algebra of quaternions, recall that $\pin$ can be defined as $S^1 \cup jS^1 \subset \H$. There is a short exact sequence
$$
 1 \To S^1 \To G \To \Z/2 \To 1.
$$

As in \cite{swfh}, we introduce notation for the following real representations of $G$:
\begin{itemize}
\item the trivial representation $\R$;
\item the one-dimensional sign representation $\tR$ on which $S^1 \subset G$ acts trivially and $j$ acts by multiplication by $-1$;
\item the quaternions $\H$, acted on by $G$ via left multiplication.
\end{itemize}

We also denote by $\tC$ the complexification $\tR \otimes_{\R} \C$; this is isomorphic to $\tR^2$ as a real representation.

\begin{fact}[\cite{Furuta, Schmidt}]
The representation ring $R(G)$ of $G=\pin$ is generated by $\tc = [\tC]$ and $h = [\H]$, subject to the relations $\tc^2 = 1$ and $\tc h = h$. 
\end{fact}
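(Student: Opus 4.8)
The plan is to compute $R(G)$ for $G = \pin$ directly from the structure of its irreducible representations, then identify the presentation in terms of $\tc$ and $h$. First I would recall the complex irreducible representations of $\pin = S^1 \cup jS^1$. The group $G$ sits in the extension $1 \to S^1 \to G \to \Z/2 \to 1$, so its one-dimensional complex representations are pulled back from $\Z/2$: there are exactly two, the trivial one $\R \otimes \C$ and the sign representation $\tC$, giving the class $\tc = [\tC]$ with $\tc^2 = 1$. For the higher-dimensional irreducibles, I would induce characters from $S^1$: the character $z \mapsto z^n$ on $S^1$ induces a $2$-dimensional representation of $G$, and induction from $n$ and $-n$ give isomorphic representations because conjugation by $j$ sends $z$ to $z^{-1}$ (equivalently $\bar z$). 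For $n \geq 1$ these induced representations are irreducible, and for $n = 0$ the induced representation splits as $\R\oplus \tR$ over $\R$, i.e. $1 + \tc$ over $\C$. So the complete list of irreducibles is $\{1, \tc\}$ together with $\{W_n\}_{n \geq 1}$, where $W_n = \operatorname{Ind}_{S^1}^G(z^n)$.

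Next I would identify $h = [\H]$. Viewing $\H = \C \oplus j\C$ with $G$ acting by left multiplication, $S^1$ acts on the first $\C$ summand by $z \cdot \zeta = z\zeta$ and on $j\C$ by $z \cdot j\zeta = jz^{-1}\zeta = j(\bar z \zeta)$; the element $j$ swaps the two summands appropriately. This is exactly the induced representation $W_1$, so $h = [W_1]$. Then I would establish the two relations: $\tc h = h$ follows because $\tc \otimes W_n \cong W_n$ as a direct computation on characters (tensoring the induced representation by a character pulled back from the quotient $\Z/2$ just reindexes, and here it gives back $W_n$), and in particular $\tc h = h$. To see that $\tc$ and $h$ generate, I would show $[W_n]$ is a polynomial in $h$ with coefficients involving $\tc$: using the character of $W_n$ restricted to $S^1$ (which is $z^n + z^{-n}$) and the multiplicative structure, one gets $W_1 \otimes W_n \cong W_{n+1} \oplus W_{n-1}$ for $n \geq 2$ and $W_1 \otimes W_1 \cong W_2 \oplus 1 \oplus \tc$; so $h \cdot [W_n] = [W_{n+1}] + [W_{n-1}]$, which inductively expresses every $[W_n]$ as a $\Z[\tc]$-polynomial in $h$. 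Hence $\tc, h$ generate $R(G)$ as a ring.

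Finally, to see that there are \emph{no further relations}, i.e. that $R(G) \cong \Z[\tc, h]/(\tc^2 - 1, \tc h - h)$, I would argue on ranks. The quotient ring $A := \Z[\tc,h]/(\tc^2-1,\tc h - h)$ is a free $\Z$-module: a $\Z$-basis is $\{1, \tc\} \cup \{h^n : n \geq 1\}$ (since $\tc h^n = h^n$ for $n \geq 1$). The surjection $A \twoheadrightarrow R(G)$ sends this basis to $\{[1], [\tc]\} \cup \{[W_1^{\otimes n}]\}$, and by the recursion above each $[W_1^{\otimes n}]$ equals $[W_n]$ plus lower terms, so the images are $\Z$-linearly independent in $R(G)$ (which is free on the irreducible characters). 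A surjection of free $\Z$-modules carrying a basis to a linearly independent set is an isomorphism, so the presentation holds. I expect the main obstacle to be the bookkeeping in the induced-representation computations — correctly handling how $j$ acts and verifying the Clebsch–Gordan-type decomposition $W_1 \otimes W_n \cong W_{n+1} \oplus W_{n-1}$ (with the correction term $1 + \tc$ at $n=1$) — since a sign error there would corrupt the recursion; everything else is formal once the irreducibles are pinned down. Since this fact is cited to \cite{Furuta, Schmidt}, I would in practice just refer to those sources for the detailed verification.
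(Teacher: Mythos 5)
The paper states this as a Fact with references to \cite{Furuta, Schmidt} and gives no proof of its own, so there is no internal argument to compare against. Your sketch is a correct and complete proof by the standard route: classify the complex irreducibles of $\pin$ (two one-dimensionals pulled back from $\Z/2$ and the two-dimensionals $W_n = \operatorname{Ind}_{S^1}^G(z^n)$ for $n \geq 1$), identify $h$ with $[W_1]$, verify the two relations via characters, use the Clebsch--Gordan recursion $h[W_n] = [W_{n+1}] + [W_{n-1}]$ with base case $[W_0] = 1 + \tc$ to show generation, and finish with the triangular change-of-basis argument that the surjection $\Z[\tc,h]/(\tc^2-1,\tc h - h) \to R(G)$ carries the spanning set $\{1,\tc\}\cup\{h^n\}_{n\geq 1}$ to a $\Z$-linearly independent set (leading term $[W_n]$ in $h^n$), hence is an isomorphism. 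The one bookkeeping step worth double-checking, as you flag, is the $n=1$ correction $W_1 \otimes W_1 \cong W_2 \oplus 1 \oplus \tc$, which your character computation handles correctly; note also that the relation $\tc h = h$ feeds back into the recursion (e.g.\ $h^3 = [W_3] + 3[W_1]$), but this only affects the lower-order terms and not the unitriangular leading coefficient, so the linear-independence argument goes through.
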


It will be convenient to use the generators:
$$w=\lambda_{-1}(\tC)= 1-\tc, \ \ \ \ \  z=\lambda_{-1}(\H)=2-h.$$ We obtain:
$$R(G) = \Z[w, z]/(w^2-2w, zw-2w).$$
The augmentation homomorphism is
\begin{equation}
\label{eq:aug}
R(G) \to \Z, \ \ \ w, z \mapsto 0.
\end{equation}
Therefore, the augmentation ideal of $R(G)$ is $\a = (w, z)$.

Observe also that restriction to the subgroup $S^1 \subset G$ induces the map
\begin{align}
\label{eq:rs1}
R(G) &\to R(S^1) = \Z[\theta, \theta^{-1}], \\ \notag
 w &\mapsto 0, \\ \notag
  z &\mapsto 2-(\theta + \theta^{-1}),
\end{align}
where $\theta$ is the class of the standard one-dimensional representation of $S^1$.

\section{The equivariant K-theory of spaces of type SWF} \label{sec:spacesSWF} 

In \cite[Section 2.3]{swfh} we defined a class of topological spaces with a $\pin$-action, called spaces of type $\swf$. These appear naturally in the context of finite dimensional approximation in Seiberg-\!Witten Floer theory; see Section~\ref{sec:swf} below. In \cite{swfh}, we found three numerical quantities (denoted $a$, $b$, and $c$) coming from the $\pin$-equivariant homology of a space of type $\swf$. Our goal in this section is to extract another quantity, denoted $k$, from the $\pin$-equivariant K-theory of a space of type $\swf$.

\subsection{A numerical invariant}
\label{sec:k}
We recall the following definition from \cite[Section 2.3]{swfh}:

\begin{definition}
Let $s \geq 0$. A {\em space of type $\swf$ (at level $s$)} is a pointed, finite $G$-CW complex $X$ with the following properties:
\begin{enumerate}[(a)]
\item The $S^1$-fixed point set $X^{S^1}$ is $G$-homotopy equivalent to the sphere $(\tR^{s})^+$;
\item The action of $G$ is free on the complement $X - X^{S^1}$.
\end{enumerate}
\end{definition}

We shall focus our attention on spaces of type $\swf$ at an {\em even} level. This is because if $s=2t$ is even, the $S^1$-fixed point set of $X$ is $G$-equivalent to the complex representation sphere $(\tC^t)^+$, so we can use equivariant Bott periodicity (Fact~\ref{fact:bott}) to get
$$ \tK_G(X^{S^1}) \cong \tK_G(S^0) \cong R(G).$$

We let $\iota: X^{S^1} \to X$ denote the inclusion, and let $\i(X)$ be the ideal of $R(G)$ with the property that the image of the induced map $\iota^*: \tK_G(X) \to \tK_G(X^{S^1})$ is $\i(X) \cdot b_{t\tC}$, where $b_{t\tC}$ is the Bott class.

\begin{lemma}
\label{lem:ii}
For any space $X$ of type $\swf$ at an even level, there exists $k \geq 0$ such that $w^k \in \i(X)$ and  $z^k \in \i(X)$.
\end{lemma}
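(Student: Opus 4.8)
The plan is to run the cofiber sequence of the inclusion $\iota\colon X^{S^1}\hookrightarrow X$ through equivariant $K$-theory and feed the result into the Atiyah--Segal nilpotence statement, Fact~\ref{fact:complete}. Set $A=X^{S^1}$. Since $A$ is the fixed-point set of the subgroup $S^1\subset G$, it is a $G$-subcomplex of the finite $G$-CW complex $X$, so $(X,A)$ is a good pair, $X\amalg_A CA$ is $G$-homotopy equivalent to the finite based $G$-CW complex $X/A$, and property (b) of a space of type $\swf$ tells us that $G$ acts freely on $X-A$, hence freely away from the basepoint on $X/A$. The same is true of the suspension $\Sigma(X/A)=S^1\wedge(X/A)$, on which $G$ acts trivially in the $S^1$-coordinate. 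Consequently Fact~\ref{fact:complete} applies to both $X/A$ and $\Sigma(X/A)$: every element of the augmentation ideal $\a=(w,z)$ acts nilpotently on $\tK_G(X/A)$ and on $\tK_G^1(X/A)\cong\tK_G(\Sigma(X/A))$.

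Next I would read off the cokernel of $\iota^*$ from the long exact sequence of Fact~\ref{fact:les}. The relevant segment,
$$\tK_G(X)\xrightarrow{\ \iota^*\ }\tK_G(A)\xrightarrow{\ \delta\ }\tK_G^1(X/A),$$
exhibits $\operatorname{coker}(\iota^*)\cong\im(\delta)$ as an $R(G)$-submodule of $\tK_G^1(X/A)$, so $w$ and $z$ act nilpotently on $\operatorname{coker}(\iota^*)$ as well. Because the level $s=2t$ is even, $A\simeq_G(\tC^t)^+$, and equivariant Bott periodicity (Fact~\ref{fact:bott}) identifies $\tK_G(A)$ with $R(G)\cdot b_{t\tC}\cong R(G)$ as an $R(G)$-module, the generator being the Bott class. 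Under this identification $\im(\iota^*)=\i(X)\cdot b_{t\tC}$ by the very definition of $\i(X)$, so $\operatorname{coker}(\iota^*)\cong R(G)/\i(X)$ is the cyclic $R(G)$-module generated by the class of $1$. Nilpotence of $w$ and $z$ on this module then yields $a,b\ge 0$ with $w^a\cdot\overline 1=z^b\cdot\overline 1=0$, i.e.\ $w^a,z^b\in\i(X)$, and $k=\max(a,b)$ works.

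I do not expect a genuinely hard step: all of the mathematical content sits inside Fact~\ref{fact:complete}, which is a form of the Atiyah--Segal completion theorem. The care required is organizational --- extracting $\operatorname{coker}(\iota^*)$ correctly from the six-term exact sequence, checking the ``free away from the basepoint'' hypothesis for the suspension $\Sigma(X/A)$ (so that nilpotence is available on the \emph{odd} group $\tK_G^1(X/A)$, which is where the obstruction to lifting lives), and verifying that the Bott isomorphism is $R(G)$-linear, so that the cokernel really is $R(G)/\i(X)$ as an $R(G)$-module rather than merely as an abelian group.
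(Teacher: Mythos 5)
Your proof is correct and follows the same route as the paper: both proofs apply the long exact sequence (Fact~\ref{fact:les}) to the pair $(X, X^{S^1})$, invoke the nilpotence of the augmentation ideal (Fact~\ref{fact:complete}) on $\tK_G^1(X/X^{S^1})$, and conclude via exactness that high powers of $w$ and $z$ land in $\i(X)$. Your phrasing in terms of $\operatorname{coker}(\iota^*)\cong R(G)/\i(X)$ and your explicit check that Fact~\ref{fact:complete} applies to the odd $K$-group via $\Sigma(X/X^{S^1})$ are modest elaborations of what the paper leaves implicit, not a different argument.
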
 

\begin{proof}
Apply the long exact sequence \eqref{eq:pair} to $A= X^{S^1} \subseteq X$:
$$ \dots \to \tK_G(X) \xrightarrow{\iota^*} \tK_G(X^{S^1}) \to \tK_G^1(X/X^{S^1}) \to \dots$$
 By the definition of spaces of type $\swf$, we know that $X/X^{S^1}$ has a free $G$-action away from the basepoint. By Fact~\ref{fact:complete}, we know that the elements $z, w \in \a$ act nilpotently on $\tK_G^1(X/X^{S^1}) \cong \tK^1((X/X^{S^1})/G)$. 

If $k$ is such that $z^k$ and $w^k$ act by $0$ on $\tK_G^1(X/X^{S^1})$, then the exact sequence implies that $z^k$ and  $w^k$ are in $\i(X)$.
\end{proof}

In $R(G)$ we have $w^2=wz=2w$, so $w \cdot w^k = w \cdot z^k = 2^k w$. In light of Lemma~\ref{lem:ii}, we can make the following:

\begin{definition}
\label{def:kx}
Given a space $X$ of type $\swf$ at an even level, we let
$$k(X) =  \min \{k \geq 0 \mid \exists \ x \in \i(X), \ w x=2^k w\}.$$
\end{definition}

Let us understand how the quantity $k$ behaves under suspensions. Note that if $X$ is a space of type $\swf$ at level $2t$, then $\Sigma^{\H}X$ is of type $\swf$ at the same level, and $\Sigma^{\tC} X$ is of type $\swf$ at level $2t+2$.

\begin{lemma}
\label{lem:susp}
If $X$ is a space of type $\swf$ at an even level, then
$$ \i(\Sigma^{\tC}X) = \i(X), \ \ \ \i(\Sigma^{\H} X) = z \cdot \i(X),$$
and consequently
$$ k(\Sigma^{\tC}X) = k(X), \ \ \ k(\Sigma^{\H} X) = k(X) + 1.$$
\end{lemma}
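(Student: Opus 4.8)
The plan is to compute the effect of the two suspensions on the map $\iota^*$ and the Bott classes, using the functoriality of Bott periodicity (Fact~\ref{fact:bott}) together with Fact~\ref{fact:bott2}. The key point is that $\iota: X^{S^1} \hookrightarrow X$ is compatible with suspension: for any representation $V$ of $G$, the inclusion $\iota_V: (\Sigma^V X)^{S^1} \to \Sigma^V X$ is just $\Sigma^V \iota$ composed with the identification $(\Sigma^V X)^{S^1} = \Sigma^{V^{S^1}}(X^{S^1})$. So I would first set up the commutative square relating $\iota^*$ for $X$ and for $\Sigma^V X$, with vertical Bott isomorphisms.

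For $V = \tC$: since $\tC^{S^1} = \tC$ (i.e. $S^1$ acts trivially on $\tC$), suspending by $\tC$ raises the level of both $X$ and $X^{S^1}$ by $2$, and the Bott isomorphisms on both sides are multiplication by $b_{\tC}$. The square
$$
\begin{CD}
\tK_G(X) @>{\iota^*}>> \tK_G(X^{S^1}) \\
@V{\cdot b_{\tC}}V{\cong}V @V{\cdot b_{\tC}}V{\cong}V \\
\tK_G(\Sigma^{\tC} X) @>{\iota_{\tC}^*}>> \tK_G(\Sigma^{\tC} X^{S^1})
\end{CD}
$$
commutes by functoriality of Bott, so $\im(\iota_{\tC}^*) = \im(\iota^*) \cdot b_{\tC}$. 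Tracking the definition of $\i$: the image of $\iota^*$ for $X$ is $\i(X) \cdot b_{t\tC}$, and the image for $\Sigma^{\tC}X$ should be $\i(\Sigma^{\tC}X) \cdot b_{(t+1)\tC}$; since $b_{(t+1)\tC} = b_{t\tC} \cdot b_{\tC}$, comparing gives $\i(\Sigma^{\tC}X) = \i(X)$, hence $k(\Sigma^{\tC}X) = k(X)$.

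For $V = \H$: now $\H^{S^1} = 0$, so $\Sigma^{\H}X$ is still of type $\swf$ at level $2t$, and $(\Sigma^{\H}X)^{S^1} = X^{S^1}$ with the \emph{identity} (no suspension) on the fixed-point side. The relevant square has the Bott isomorphism $\cdot\, b_{\H}$ on the left (from $\tK_G(X)$ to $\tK_G(\Sigma^{\H}X)$) but on the right the map $\tK_G(X^{S^1}) \to \tK_G((\Sigma^{\H}X)^{S^1}) = \tK_G(X^{S^1})$ induced by the inclusion $X^{S^1} \hookrightarrow \Sigma^{\H}(X^{S^1})$ composed with... — more precisely, restricting $b_{\H} \in \tK_G(\H^+)$ along $S^1^+ = S^0 \hookrightarrow \H^+$ gives, by Fact~\ref{fact:bott2}, the K-theoretic Euler class $\lambda_{-1}(\H) = z \in R(G)$. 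So the right vertical map is multiplication by $z$. Chasing the square: $\im(\iota_{\H}^*) = z \cdot \im(\iota^*)$, which in terms of the ideals reads $\i(\Sigma^{\H}X) \cdot b_{t\tC} = z \cdot \i(X) \cdot b_{t\tC}$, i.e. $\i(\Sigma^{\H}X) = z \cdot \i(X)$. Finally, since $w \cdot z = 2w$ in $R(G)$, multiplying $\i(X)$ by $z$ shifts the minimal power in Definition~\ref{def:kx} by exactly one: if $x \in \i(X)$ with $wx = 2^k w$ then $zx \in z\i(X)$ with $w(zx) = 2wx = 2^{k+1}w$; conversely any $y \in z\i(X)$ satisfies $wy = w z x' = 2wx' \in 2^{\ge 1}w$, so $k(\Sigma^{\H}X) = k(X)+1$.

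The main obstacle I expect is the bookkeeping around the fixed-point sets and Bott classes — making sure that the identification $(\Sigma^V X)^{S^1} \cong \Sigma^{V^{S^1}}(X^{S^1})$ is $G$-equivariant and compatible with the chosen Bott classes on $X^{S^1}$, and correctly invoking Fact~\ref{fact:bott2} to get that the restriction of $b_{\H}$ to the $S^1$-fixed sphere is the Euler class $z = \lambda_{-1}(\H)$ rather than something else. Everything else is formal diagram-chasing plus the elementary ring computation in $R(G) = \Z[w,z]/(w^2 - 2w, zw - 2w)$.
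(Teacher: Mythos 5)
Your proposal is correct and takes essentially the same approach as the paper: both the $\tC$ case (via $(\Sigma^{\tC}X)^{S^1} = \Sigma^{\tC}(X^{S^1})$ and functoriality of Bott) and the $\H$ case (via the Euler class $\lambda_{-1}(\H) = z$ and a commutative square) match the paper's argument, with the only cosmetic difference that the paper places the inclusion-induced maps $\tK_G(\Sigma^{\H}X) \to \tK_G(X)$ and $\tK_G((\Sigma^{\H}X)^{S^1}) \to \tK_G(X^{S^1})$ as the horizontal arrows (identified under Bott with multiplication by $z$ and the identity respectively), whereas you put the Bott isomorphism and multiplication by $z$ as the vertical arrows — the same diagram, transposed. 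The final $R(G)$ calculation showing $k(\Sigma^{\H}X) = k(X)+1$ is also the same; your converse step is stated a bit loosely ($wy \in 2^{\ge 1}w$) but the intent — that $wy = 2^j w$ with $y = zx'$ forces $wx' = 2^{j-1}w$ and hence $j-1 \geq k(X)$ — is clear and correct.
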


\begin{proof}
The statements about $\Sigma^{\tC}X$ follow from the fact that $(\Sigma^{\tC} X)^{S^1} = \Sigma^{\tC}(X^{S^1})$, together with the functoriality of the Bott isomorphism (Fact~\ref{fact:bott}).

To get the statements about $\Sigma^{\H}X$, note that inclusions of subspaces produce a commutative diagram
$$\begin{CD}
\tK_G(\Sigma^{\H}X) @>>> \tK_G(X) \\
@V{\iota_2^*}VV @VV{\iota_1^*}V \\
\tK_G((\Sigma^{\H} X)^{S^1}) @>{\cong}>> \tK_G(X^{S^1}).
\end{CD}$$
Since $(\Sigma^{\H} X)^{S^1} = X^{S^1}$, the bottom horizontal map is just the identity. Under the Bott isomorphism identification $\tK_G(\Sigma^{\H}X)\cong \tK_G(X)$, the top horizontal map is multiplication by $\lambda_{-1}(\H) = z$. This implies that
$$ \i(\Sigma^{\H}X)= z \cdot \i(X) \subseteq R(G).$$ 

If we are given $x \in \i(X)$ with $wx=2^kw$, we get $w(zx) = 2wx = 2^{k+1}w$. Conversely, if we have $x \in \i(X)$ with $w(zx) = 2^kw$, then $2wx = 2^kw$, so $wx = 2^{k-1}w$. Therefore, $k(\Sigma^{\H}X) = k(X) +1$.
\end{proof}

\subsection{Examples}
 \label{sec:exk}
 The simplest example of a space of type $\swf$ is $S^0$, for which $\i(S^0)=(1)$ and $ k(S^0)= 0.$ 
From Lemma~\ref{lem:susp} we get that
$$ \i((\tC^{t} \oplus \H^l)^+) = (z^l) \ \ \text{and} \ \ k( (\tC^t \oplus \H^l)^+) = l. $$

Further, observe that if $X$ is a space of type $\swf$ at level $2t$, and $X'$ is a space with a free $G$-action away from the basepoint, then the wedge sum $X \vee X'$ is also of type $\swf$ at level $2t$. The term $\tK_G(X')$ in $\tK_G(X \vee X') \cong \tK_G(X) \oplus \tK_G(X')$ does not interact with the $S^1$-fixed point set through the map $\iota^*$. Therefore,
\begin{equation}
\label{eq:kwedge}
 \i(X \vee X') = \i(X) \ \ \text{and} \ \ k(X \vee X') = k(X).
 \end{equation}
  
Combining the observations above, we find that if a space $X$ decomposes as a wedge sum of a representation sphere $(\tC^t \oplus \H^k)^+$ and a free $G$-space, then $\i(X)$ is of the form $(z^k)$.
We call such spaces {\em split}. 

We now introduce the following notion, which will play an important role in the paper:
 
\begin{definition}
\label{def:ksplit}
A space of type $\swf$ at an even level is called {\em $K_G$-split} if the ideal $\i(X) \subseteq R(G)$ is of the form $(z^k)$ for some $k \geq 0$.
\end{definition}

Thus, the $K_G$-split spaces are those that are indistinguishable from split spaces in terms of the ideal $\i(X)$. 

Let us give two examples of spaces of type $\swf$ that are not $K_G$-split. These examples were also considered in \cite[Section 2.4]{swfh}, and arise from the following construction. Suppose that $G$ acts freely on a finite $G$-CW-complex $Z$, and let $Q = Z/G$ be the respective quotient. Let $$\tilde Z = \bigl( [0,1]\times Z \bigr ) / (0,z) \sim (0, z') \text{ and } (1,z) \sim (1, z') \text{ for all } z,z' \in Z$$ denote the unreduced suspension of $Z$, where $G$ acts trivially on the $[0,1]$ factor. We view $\tilde Z$ as a pointed $G$-space, with one of the two cone points being the basepoint. Then $\tilde Z$ is of type $\swf$ at level $0$. There is a long exact sequence:
$$ \dots \To \tK_G(\tilde Z) \To \tK_G(S^0) \To \tK^{1}_G(\Sigma Z_+) \To \dots$$
Because $G$ acts freely on $Z$, we have $\tK^{1}_G(\Sigma Z_+) \cong \tK_G(Z_+) \cong K(Q)$, and the above sequence can be written
\begin{equation}
\label{eq:z}
0 \To K^1(Q) \To \tK_G(\tilde Z) \xrightarrow{\phantom{b}\iota^*} R(G) \xrightarrow{\phantom{b}\pi^*} K(Q) \To \tK^1_G(\tilde Z) \To 0.
\end{equation}
Here, the map $\pi^*$ is the one described in Fact~\ref{fact:free2}. Exactness tells us that the ideal $\i(\tZ)$ is the kernel of $\pi^*$.

\begin{example}
\label{ex:G}
Take $Z = G$, acting on itself via left multiplication, so that the quotient $Q$ is a single point. In the exact sequence \eqref{eq:z}, the map $\pi^*$ is the augmentation homomorphism $R(G) \to \Z$ from \eqref{eq:aug}. Therefore,
$$\i(\tilde G) = (w, z) \ \ \text{and} \ \ k(\tilde G) =  1.$$ 
Further, since $K^1(Q) =0$, we can compute
$$ \tK_G(\tilde G) \cong (w,z) \ \ \text{and} \ \ \tK^1_G(\tilde G)=0.$$
\end{example}

\begin{example}
\label{ex:torus}
Now let $Z$ be the torus
$$ T = S^1 \times jS^1 \subset \C \oplus j\C = \H,$$
with the $G$-action coming from $\H$. The quotient is $Q \cong S^1$. Since the inclusion of a point in $S^1$ induces an isomorphism 
$$K(S^1) \xrightarrow{\cong} K(pt) = \Z,$$
we deduce that the ideal $\i(\tilde T)$ is the same as in the previous example, that is,
\begin{equation}
\label{eq:zpn}
\i(\tilde T) = (w, z) \ \ \text{and} \ \ k(\tilde T) =  1.
 \end{equation}
 Further, from \eqref{eq:z}, we get that 
 $  \tK_G^1(\tilde T) = 0$ and $\tK_G(\tilde T)$ is an extension of $(w,z)$ by $K^1(Q) \cong \Z$. (Here, $\Z$ is $R(G)/(w,z)$ as an $R(G)$-module.)
\end{example}

\subsection{Properties}
We now turn to some general properties of the invariant $k(X)$.

\begin{lemma}
\label{lem:map0}
Let $X$ and $X'$ be spaces of type $\swf$ at even levels. Suppose that there exists a based, $G$-equivariant homotopy equivalence from $\Sigma^{r \R} X$ to $\Sigma^{r\R} X'$, for some $r \geq 0$. Then, we have $\i(X) = \i(X')$ and $k(X) = k(X')$.
\end{lemma}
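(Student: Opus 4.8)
The plan is to reduce the statement to the two basic suspension results already established. The key observation is that $\Sigma^{r\R}X$ is not itself a space of type $\swf$ at an even level, since real suspension changes the $S^1$-fixed point set by a factor of $(r\R)^+$, which is odd-dimensional when $r$ is odd. To fix this, I would suspend once more by the \emph{same} trivial representation, i.e., pass to $\Sigma^{2r\R}X = \Sigma^{r\R}(\Sigma^{r\R}X)$. Since a $G$-homotopy equivalence $\Sigma^{r\R}X \simeq \Sigma^{r\R}X'$ induces a $G$-homotopy equivalence $\Sigma^{2r\R}X \simeq \Sigma^{2r\R}X'$, and since $\Sigma^{2r\R}X$ is genuinely a space of type $\swf$ at level $2t+2r$, we have $\i(\Sigma^{2r\R}X) = \i(\Sigma^{2r\R}X')$ and $k(\Sigma^{2r\R}X) = k(\Sigma^{2r\R}X')$ by Fact~\ref{fact:zero} (a homotopy equivalence induces an isomorphism on $\tK_G$ compatible with the maps $\iota^*$).

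Next I would show that suspension by $2r\R$ does not change $\i$ or $k$. Since $2r\R \cong r\C$ as a real representation — more precisely $\R^2 \cong \C$ as $G$-representations when $G$ acts trivially — we have $\Sigma^{2r\R}X \cong \Sigma^{r\C}X$. But $\C$ (with trivial $G$-action) is a complex representation of $G$, so Fact~\ref{fact:bott} gives an equivariant Bott periodicity isomorphism, and the argument is parallel to the $\Sigma^{\tC}X$ case of Lemma~\ref{lem:susp}: the fixed point set satisfies $(\Sigma^{r\C}X)^{S^1} = \Sigma^{r\C}(X^{S^1})$, and functoriality of the Bott isomorphism identifies $\iota^*$ for $\Sigma^{r\C}X$ with $\iota^*$ for $X$ under compatible Bott isomorphisms. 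Hence $\i(\Sigma^{2r\R}X) = \i(X)$ and $k(\Sigma^{2r\R}X) = k(X)$, and likewise for $X'$.

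Chaining these equalities gives $\i(X) = \i(\Sigma^{2r\R}X) = \i(\Sigma^{2r\R}X') = \i(X')$, and the statement about $k$ follows since $k$ is defined purely from the ideal $\i$.

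The only delicate point I anticipate is bookkeeping with the Bott classes: in Definition~\ref{def:kx} the image of $\iota^*$ is described as $\i(X)\cdot b_{t\tC}$ for a Bott class attached to the fixed-point sphere $(\tC^t)^+$, and after suspending by $r\C$ the relevant Bott class becomes $b_{t\tC}\cdot b_{r\C}$ (a product of Bott classes under Fact~\ref{fact:prod} / Fact~\ref{fact:bott}). One must check that multiplication by the extra factor $b_{r\C}$ is an isomorphism (it is, being a Bott class for a complex representation) and that it intertwines the relevant maps, so that the ideal extracted is unchanged; this is exactly the content of the functoriality clause in Fact~\ref{fact:bott}, applied as in the proof of Lemma~\ref{lem:susp}. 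Everything else is a formal diagram chase.
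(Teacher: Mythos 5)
Your overall plan (reduce to suspension by a trivial complex representation, then run Bott periodicity through a commutative diagram and compare the two $\iota^*$ maps) is the same as the paper's, and it works. But one step as written is not literally correct and should be fixed: you claim that $\Sigma^{2r\R}X$ is ``genuinely a space of type $\swf$ at level $2t+2r$.'' It is not, because the $S^1$-fixed point set of $\Sigma^{2r\R}X$ is $(\R^{2r}\oplus\tR^{2t})^+$, and $\R$ and $\tR$ are \emph{not} isomorphic as $G$-representations (on $\tR$ the element $j$ acts by $-1$, while on $\R$ it acts trivially). Consequently the fixed-point set is not of the form $(\tR^s)^+$, and the definition of $\i$ as stated does not apply to $\Sigma^{2r\R}X$. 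This matters because the sentence ``we have $\i(\Sigma^{2r\R}X)=\i(\Sigma^{2r\R}X')$ by Fact~\ref{fact:zero}'' is invoking the invariant $\i$ on a space where it is not defined.

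You are clearly aware of the actual issue — your final paragraph correctly notes that after suspension the relevant Bott class is $b_{t\tC}\cdot b_{r\C}$ rather than $b_{(t+r)\tC}$, which is exactly the acknowledgment that $(\Sigma^{r\C}X)^{S^1}\simeq(\tC^t\oplus\C^r)^+$ is not $(\tC^{t+r})^+$. The clean fix is to not claim $\Sigma^{2r\R}X$ is of type $\swf$ at all, but rather to note (as the paper does) that because $U := 2r\R \cong r\C$ is a \emph{complex} $G$-representation with trivial $S^1$-action, the Bott isomorphism of Fact~\ref{fact:bott} and its functoriality give a four-term commutative ladder
$$\xymatrix@C=1.3em{
\tK_G(X') \ar[r]^-{\cong} \ar[d] & \tK_G(\Sigma^U X') \ar[r]^-{\cong} \ar[d] & \tK_G(\Sigma^U X) \ar[r]^-{\cong} \ar[d] & \tK_G(X) \ar[d] \\
\tK_G\bigl((X')^{S^1}\bigr) \ar[r]^-{\cong} & \tK_G\bigl((\Sigma^U X')^{S^1}\bigr) \ar[r]^-{\cong} & \tK_G\bigl((\Sigma^U X)^{S^1}\bigr) \ar[r]^-{\cong} & \tK_G\bigl(X^{S^1}\bigr),
}$$
whose outer columns are the two $\iota^*$ maps; commutativity then gives $\i(X)=\i(X')$ and hence $k(X)=k(X')$ directly, without ever needing to know that the intermediate spaces are of type $\swf$. (The paper passes to $(r+1)\R$ if $r$ is odd rather than to $2r\R$; either choice of trivial complex representation works.) With that rephrasing your proof is the paper's proof.
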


\begin{proof}
By suspending the $G$-equivalence $\Sigma^{r\R} X \to \Sigma^{r\R} X'$ with another copy of $\R$ we can assume that $r$ is even, so that $U:=r \R = (r/2) \C$ is a complex representation. Consider the commutative diagrams
$$\xymatrix{
\tK_G(X') \ar[r]^{\cong} \ar[d] & \tK_G(\Sigma^U X')  \ar[r]^{\cong} \ar[d] & \tK_G(\Sigma^U X) \ar[r]^{\cong} \ar[d] & \tK_G(X) \ar[d] \\
\tK_G\bigl ((X')^{S^1} \bigr) \ar[r]^{\cong} & \tK_G\bigl( (\Sigma^U X')^{S^1} \bigr)  \ar[r]^{\cong} & \tK_G\bigl( (\Sigma^U X)^{S^1} \bigr) \ar[r]^{\cong} & \tK_G\bigl( (X)^{S^1} \bigr).
}$$
Here, in each row, the first map is a Bott isomorphism, the second comes from the $G$-equivalence in the hypothesis, and the third is the inverse to a Bott isomorphism. The vertical arrows are given by restriction. Comparing the first vertical arrow to the last we obtain the desired conclusions.
\end{proof}

\begin{lemma}
\label{lem:map1}
Let $X$ and $X'$ be spaces of type $\swf$ at the same even level $2t$, and suppose that $f: X \to X'$ is a $G$-equivariant map whose $S^1$-fixed point set map is an $G$-homotopy equivalence. Then:
$$ k(X) \leq k(X').$$
\end{lemma}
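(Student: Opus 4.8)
The plan is to use the $S^1$-fixed point set map of $f$ to build a commutative diagram comparing the two restriction maps $\iota^*$, and then track where the Bott class $b_{t\tC}$ goes. Concretely, first I would form the commutative square
$$\begin{CD}
\tK_G(X') @>{f^*}>> \tK_G(X) \\
@V{(\iota')^*}VV @VV{\iota^*}V \\
\tK_G\bigl((X')^{S^1}\bigr) @>{(f^{S^1})^*}>> \tK_G\bigl(X^{S^1}\bigr),
\end{CD}$$
which exists by naturality of the restriction map (Fact~\ref{fact:zero}) applied to the inclusions $\iota, \iota'$ and the map $f$. Since both $X$ and $X'$ are of type $\swf$ at the same even level $2t$, both fixed-point sets are $G$-equivalent to $(\tC^t)^+$, and the hypothesis says $f^{S^1}$ is a $G$-homotopy equivalence between them; hence $(f^{S^1})^*$ is an isomorphism.

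The next step is to identify what $(f^{S^1})^*$ does to the Bott class. Under the Bott periodicity identifications $\tK_G\bigl((X')^{S^1}\bigr)\cong R(G)\cdot b_{t\tC}$ and $\tK_G\bigl(X^{S^1}\bigr)\cong R(G)\cdot b_{t\tC}$, the map $(f^{S^1})^*$ is multiplication by some unit $u\in R(G)$ (the only units being $\pm 1$ and $\pm\tc$, but I will not even need this). The key point is simply that $(f^{S^1})^*$ is an $R(G)$-module isomorphism, so it carries the submodule $\i(X')\cdot b_{t\tC} = \im\bigl((\iota')^*\bigr)$ isomorphically onto $\i(X')\cdot u\cdot b_{t\tC} = \i(X')\cdot b_{t\tC}$ again (since $\i(X')$ is an ideal and $u$ a unit). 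Chasing the square: $\im(\iota^*)\supseteq \iota^*\bigl(f^*(\tK_G(X'))\bigr) = (f^{S^1})^*\bigl(\im((\iota')^*)\bigr) = \i(X')\cdot b_{t\tC}$. Therefore $\i(X')\subseteq \i(X)$.

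Finally I would deduce the inequality on $k$ from this inclusion of ideals. By Lemma~\ref{lem:ii}, $k(X')$ is realized by some $x'\in\i(X')$ with $wx' = 2^{k(X')}w$; since $\i(X')\subseteq\i(X)$, the same $x'$ lies in $\i(X)$, so by the definition of $k(X)$ as a minimum (Definition~\ref{def:kx}) we get $k(X)\leq k(X')$, as desired.

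The only mildly delicate point is making sure the Bott-class bookkeeping is consistent across the square — i.e., that the vertical maps are the restriction maps $(\iota')^*$ and $\iota^*$ used to define $\i(X')$ and $\i(X)$, and that the horizontal iso $(f^{S^1})^*$ respects the $R(G)$-module structure so that multiplication by the unit $u$ doesn't change the ideal. This is exactly the same functoriality-of-Bott argument used in Lemma~\ref{lem:map0} and Lemma~\ref{lem:susp}, so no new ideas are needed; I expect the whole proof to be three or four lines once the diagram is drawn. (One could alternatively note that $f^*$ need not be injective or surjective, which is why the argument only yields the inclusion $\i(X')\subseteq\i(X)$ and the one-sided bound $k(X)\leq k(X')$, rather than equality.)
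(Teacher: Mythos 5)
Your proof is correct and is essentially the paper's own argument: the paper forms the same commutative square, notes that the bottom arrow is an isomorphism, and reads off $\i(X')\subseteq\i(X)$, hence $k(X)\leq k(X')$. The extra care you take in checking that $(f^{S^1})^*$ is multiplication by a unit of $R(G)$ (so that it preserves the ideal under the Bott identifications) is a detail the paper leaves implicit, but it is the right justification.
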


\begin{proof}
Analyzing the commutative diagram
$$\begin{CD}
\tK_G(X') @>{f^*}>> \tK_G(X) \\
@V{(\iota')^*}VV @VV{\iota^*}V \\
\tK_G((X')^{S^1}) @>{\cong}>> \tK_G(X^{S^1}),
\end{CD}$$
we see that $\i(X') \subseteq \i(X)$. This implies $k(X) \leq k(X')$.
\end{proof}

\begin{lemma}
\label{lem:map2}
Let $X$ and $X'$ be spaces of type $\swf$ at levels $2t$ and $2t'$, respectively, such that $t < t'$. Suppose that $f: X \to X'$ is a $G$-equivariant map whose $G$-fixed point set map is a homotopy equivalence. Then:
$$ k(X) +t \leq  k(X') + t'.$$
\end{lemma}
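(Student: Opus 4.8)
The plan is to reduce this to the previous lemmas by ``trading'' the difference in levels for a suspension by $\H$. Note first that $\dim_{\C}((X')^{S^1}) - \dim_{\C}(X^{S^1}) = t' - t > 0$, and since the $G$-fixed point set of a space of type $\swf$ at level $2t$ is a point (the $G$-action is free away from $X^{S^1}$, which is itself $G$-equivalent to $(\tC^t)^+$ with $G$ acting through $G \to \Z/2$, so $X^G = ((X^{S^1})^{\Z/2})$, a single point when $s=2t>0$, or $S^0$ when $t=0$), the hypothesis that the $G$-fixed point map of $f$ is a homotopy equivalence is automatically satisfied in many cases; what we actually need is to compare $X$ and $X'$ after making their $S^1$-fixed point sets agree. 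The key idea: $\Sigma^{\H} X$ is a space of type $\swf$ at the same level $2t$ as $X$, so iterating, $\Sigma^{(t'-t)\H} X$ is of type $\swf$ at level $2t$ still, but now we want to instead suspend so levels match. Concretely, I would consider $\Sigma^{(t'-t)\tC} X$, which by the remark before Lemma~\ref{lem:susp} is of type $\swf$ at level $2t + 2(t'-t) = 2t'$, the same level as $X'$.

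The main step is then to produce, from $f: X \to X'$, a $G$-equivariant map $\tilde f : \Sigma^{(t'-t)\tC} X \to X'$ whose $S^1$-fixed point set map is a $G$-homotopy equivalence, so that Lemma~\ref{lem:map1} applies and gives $k(\Sigma^{(t'-t)\tC} X) \leq k(X')$. By Lemma~\ref{lem:susp}, $k(\Sigma^{(t'-t)\tC} X) = k(X)$, and this does not yet see the level shift. So instead I would suspend by $\H$'s: the correct comparison uses that $(\Sigma^{(t'-t)\H}X)^{S^1} = X^{S^1} = (\tC^t)^+$, while we want something at level $2t'$. This suggests the right move is: the $S^1$-fixed point set of $X'$ is $(\tC^{t'})^+ = (\tC^t)^+ \wedge (\tC^{t'-t})^+$. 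The inclusion $X^{S^1} = (\tC^t)^+ \hookrightarrow (\tC^{t'})^+ = (X')^{S^1}$ realizing the bottom cells, combined with $f$, should be used to build a $G$-map $\Sigma^{(t'-t)\tC}X \to X'$ extending $f$ on the $X$ summand and the standard inclusion on fixed points. Then Lemma~\ref{lem:map1} yields $k(\Sigma^{(t'-t)\tC} X) \leq k(X')$; but since $\i(\Sigma^{\tC}X) = \i(X)$ gives no shift, I must instead route through $\Sigma^{\H}$: writing $\tC^{t'-t}$ and $\H^{t'-t}$ as real representations of the same dimension $2(t'-t)$, there is a (non-equivariant but $S^1$-equivariantly, hence after restriction usable) comparison. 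The cleanest path: observe $\Sigma^{\H}(\text{type }\swf\text{ at level }2s)$ has the same level, and $\i(\Sigma^{\H}X) = z\cdot \i(X)$, so $k(\Sigma^{(t'-t)\H}X) = k(X) + (t'-t)$; if I can produce a $G$-map $g: \Sigma^{(t'-t)\H} X \to X'$ with $S^1$-fixed-point map a $G$-homotopy equivalence (both fixed sets are $(\tC^{t'})^+$ once we note $(\Sigma^{(t'-t)\H}X)^{S^1} = X^{S^1}$... which has the wrong level).

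I realize the correct bookkeeping is: we cannot match levels by $\H$-suspension. So the genuine approach is to suspend $X$ by $(t'-t)\tC$ to land at level $2t'$, apply Lemma~\ref{lem:map1} to get $k(\Sigma^{(t'-t)\tC}X) \leq k(X')$, and then separately relate $k(\Sigma^{(t'-t)\tC}X)$ to $k(X) + (t'-t)$ from below. For the latter: there is a $G$-map $(\tC^{t'-t})^+ \wedge X \to (\H^{t'-t})^+ \wedge X$? No — rather, restrict attention to the ideal level and use that $\tC$ and $\H$ have a relation in $R(G)$, namely $z w = 2w$ and $w = \lambda_{-1}(\tC)$, $z = \lambda_{-1}(\H)$, so the Euler classes satisfy $w \cdot z = 2w$; iterating, Fact~\ref{fact:bott2} shows the self-map of $\tK_G$ induced by including $X$ into $\Sigma^{\tC}X$ is multiplication by $w$, and into $\Sigma^{\H}X$ is multiplication by $z$. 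This is precisely the content that makes Lemma~\ref{lem:susp} work, and it shows that after suspending by $(t'-t)\tC$, the ideal is unchanged: $\i(\Sigma^{(t'-t)\tC}X) = \i(X)$. Hence $k(\Sigma^{(t'-t)\tC}X) = k(X)$, giving only $k(X) \leq k(X')$ — too weak. Therefore the level term must enter through a different mechanism: the map $f$ on $G$-fixed points being an equivalence forces, via the long exact sequences of the pairs $(X, X^{S^1})$ and $(X', (X')^{S^1})$ together with the $S^1$-equivariant (i.e.\ after restriction to $S^1$) structure, an inclusion of ideals $\i(X') \subseteq z^{t'-t}\i(X)$ inside $R(G)$. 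This is the main obstacle: establishing $\i(X') \subseteq z^{t'-t}\,\i(X)$, which I expect follows by comparing the $S^1$-equivariant K-theory — restricting to $S^1 \subset G$, where by \eqref{eq:rs1} the class $z$ maps to $2 - (\theta+\theta^{-1})$, the Euler class of $\C_{(1)}$ times that of $\C_{(-1)}$ — and using that the $S^1$-fixed point sets differ by $(t'-t)$ complex dimensions, so that localizing/comparing Bott classes contributes exactly a factor $z^{t'-t}$. Granting $\i(X') \subseteq z^{t'-t}\i(X)$, if $x' \in \i(X')$ satisfies $wx' = 2^{k(X')}w$, write $x' = z^{t'-t}x$ with $x \in \i(X)$; then $wx' = w z^{t'-t} x = 2^{t'-t} wx$, so $wx = 2^{k(X') - (t'-t)}w$, whence $k(X) \leq k(X') - (t'-t)$, i.e.\ $k(X) + (t'-t) \leq k(X')$. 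Rearranged, $k(X) + t \leq k(X') + t'$, which is the claim. The hard part will be the K-theoretic comparison $\i(X') \subseteq z^{t'-t}\i(X)$; I would prove it by functoriality of the diagram
$$\begin{CD}
\tK_G(X') @>{f^*}>> \tK_G(X) \\
@V{(\iota')^*}VV @VV{\iota^*}V \\
\tK_G((X')^{S^1}) @>{}>> \tK_G(X^{S^1}),
\end{CD}$$
identifying the bottom map — induced by the inclusion of the bottom cell $(\tC^t)^+ \hookrightarrow (\tC^{t'})^+$, which under the two Bott identifications with $R(G)$ is multiplication by the Euler class $\lambda_{-1}(\tC^{t'-t})$; but $\lambda_{-1}(\tC) = w$ with $w^2 = 2w$, so $\lambda_{-1}(\tC^{t'-t}) = w^{t'-t}$, not $z^{t'-t}$ — forcing the realization that the fixed-point inclusion is instead via $\H$-summands when the levels are compared correctly in the Seiberg–Witten setup, i.e.\ the extra $t'-t$ complex dimensions in $(X')^{S^1}$ come paired (by the structure of finite-dimensional approximation) with $\H$-summands in the free part, making the relevant Euler class $z^{t'-t}$. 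Reconciling this — pinning down that the bottom map of the square is multiplication by $z^{t'-t}$ under the natural identifications — is the crux; once it is in hand, the algebraic manipulation above finishes the proof.
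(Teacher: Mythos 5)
Your proposal gets the setup right (the commutative square relating $\tK_G$ of $X,X'$ to their $S^1$-fixed sets) but the central algebraic claim you need is false, and the argument does not close. You propose to show $\i(X') \subseteq z^{t'-t}\,\i(X)$ and then run the algebra $wx' = 2^{t'-t}wx$. That inclusion is already wrong for $X = S^0$ and $X' = \tC^+$ with $f$ the inclusion of $\{0,\infty\}$: here $\i(X) = \i(X') = R(G)$, but $R(G) \not\subseteq zR(G)$. Worse, the inequality you would deduce from it, $k(X) + (t'-t) \leq k(X')$, is strictly stronger than the lemma and is false in this same example ($1 \not\leq 0$).

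The ingredient you are missing is how to pin down the element of $R(G)$ by which $(f^{S^1})^*$ acts. The paper enlarges your two-row diagram to a three-row one: below the row of $S^1$-fixed sets sits the row of $G$-fixed sets, and the vertical maps are multiplication by $w^{t'}$ and $w^t$ (Euler class of $\tC^{t'}$, $\tC^t$). Since the $G$-fixed map is a homotopy equivalence, this forces $(f^{S^1})^*$ to be multiplication by some $y$ with $w^t y = w^{t'}$. Separately, restricting to $S^1$ (Fact~\ref{fact:res}) kills the middle horizontal map, because $f^{S^1}$ is a map from a $2t$-sphere to a $2t'$-sphere with $t<t'$, hence null; so $y$ lies in $\ker(R(G)\to R(S^1))$, i.e.\ $y = cw$. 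Combining, $cw^{t+1} = w^{t'}$ gives $c = 2^{t'-t-1}$, so $(f^{S^1})^*$ is multiplication by $2^{t'-t-1}w$. Note this equals $w^{t'-t}$, which you actually computed in your last paragraph before talking yourself out of it; the issue is that the relevant relation is $w^{t'-t}\i(X')\subseteq \i(X)$, not $\i(X')\subseteq z^{t'-t}\i(X)$. With the correct relation, if $x'\in\i(X')$ has $wx' = 2^{k'}w$ then $w^{t'-t}x' = 2^{k'+t'-t-1}w \in \i(X)$ and $w\cdot(2^{k'+t'-t-1}w) = 2^{k'+t'-t}w$, giving exactly $k(X) \leq k' + t' - t$. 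Your penultimate calculation $\lambda_{-1}(\tC^{t'-t}) = w^{t'-t}$ was the right instinct; the gap is that you dismissed it and never supplied the argument (via the $G$-fixed row and the $S^1$-restriction) that justifies this identification of the map, nor the correct algebra using $w^{t'-t}$ rather than $z^{t'-t}$.
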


\begin{proof}
Note that the $G$-fixed point set of a space of type $\swf$ is homotopy equivalent to $S^0$. We have commutative diagrams:
\begin{equation}
\label{eq:2cd}
\begin{CD}
\tK_G(X') @>{f^*}>> \tK_G(X) \\
@V{(\iota')^*}VV @VV{\iota^*}V \\
\tK_G((X')^{S^1}) @>{(f^{S^1})^*}>> \tK_G(X^{S^1})\\
@V{\cdot w^{t'}}VV @VV{\cdot w^t}V \\
\tK_G((X')^G) @>{\cdot 1}>> \tK_G(X^G).
\end{CD}
\end{equation}
The bottom four groups are all isomorphic to $R(G)$, and we identified three of the maps with multiplications by elements of $R(G)$ using these isomorphisms; see Facts~\ref{fact:bott} and \ref{fact:bott2}. We deduce that the middle horizontal map $(f^{S^1})^*$ in \eqref{eq:2cd} is multiplication by an element $y \in R(G)$ such that 
\begin{equation}
\label{eq:y}
w^t \cdot y = w^{t'}.
\end{equation}

On the other hand, since $t < t'$, in view of Fact~\ref{fact:bott}, the map
$$ (f^{S^1})^*: \tK_{S^1}((X')^{S^1}) \to \tK_{S^1}(X^{S^1})$$
is zero. If we apply restriction maps $\tK_G \to \tK_{S^1}$ to the middle row in \eqref{eq:2cd} (compare Fact~\ref{fact:res}), we see that $y$ must be mapped to $0$ under the map $R(G) \to R(S^1)$ given by \eqref{eq:rs1}. This implies that $y=cw$ for some $c \in \Z$, and from \eqref{eq:y} we get 
$$ 2^t c w = c w^{t+1} = w^{t'} = 2^{t'-1}w,$$
so $c = 2^{t'-t-1}$. We deduce that $(f^{S^1})^*$ is multiplication by $2^{t'-t-1}w$.

Let $x \in \i(X')$ be such that $wx = 2^{k'} w$, where $k'=k(X')$. From the top diagram in \eqref{eq:2cd} we see that 
$$(f^{S^1})^*(x) = 2^{t'-t-1}wx = 2^{k'+t'-t-1}w \in \i(X).$$
Since $w(2^{k'+t'-t-1}w) = 2^{k'+t'-t}w$, we get that $k(X) \leq k'+t'-t$. 
\end{proof}

In the presence of the $K_G$-split assumption, we can strengthen Lemma~\ref{lem:map2}:
\begin{lemma}
\label{lem:map3}
Let $X$ and $X'$ be spaces of type $\swf$ at levels $2t$ and $2t'$, respectively, such that $t < t'$ and $X$ is $K_G$-split. Suppose that $f: X \to X'$ is a $G$-equivariant map whose $G$-fixed point set map is a homotopy equivalence. Then:
$$ k(X) + t + 1 \leq k(X')+t'.$$
\end{lemma}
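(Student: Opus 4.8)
The plan is to follow the proof of Lemma~\ref{lem:map2} verbatim up to the point where it produces an explicit element of $\i(X)$, and then to squeeze out the extra summand $1$ from the $K_G$-split hypothesis by an elementary computation inside $R(G)$. Concretely, the diagram \eqref{eq:2cd} together with the restriction-to-$S^1$ argument still show that, under the Bott identifications $\tK_G(X^{S^1})\cong R(G)\cong \tK_G((X')^{S^1})$, the middle horizontal map $(f^{S^1})^*$ is multiplication by $2^{t'-t-1}w$; none of that used anything about $\i(X)$. Choosing $x\in\i(X')$ with $wx=2^{k'}w$ for $k':=k(X')$, the top square of \eqref{eq:2cd} then gives
$$(f^{S^1})^*(x) \;=\; 2^{t'-t-1}\,wx \;=\; 2^{k'+t'-t-1}\,w \;\in\; \i(X),$$
and the hypothesis $t<t'$ makes the exponent $k'+t'-t-1$ a nonnegative integer. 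This is exactly where Lemma~\ref{lem:map2} stopped, concluding only $k(X)\le k'+t'-t$, since a generic element $2^{N}w\in\i(X)$ satisfies $w\cdot 2^{N}w=2^{N+1}w$.

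The new input is to intersect $\i(X)$ with the principal ideal $(w)$. Writing $k:=k(X)$, the $K_G$-split hypothesis says $\i(X)=(z^{k})$. Using that $R(G)=\Z[w,z]/(w^2-2w,\,zw-2w)$ is free as an abelian group on $\{1,z,z^2,\dots\}\cup\{w\}$ and that $z^{j}w=2^{j}w$, one checks directly that $(w)=\Z\cdot w$ and, more to the point, that $(z^{k})\cap(w)=2^{k}\Z\cdot w$: if $z^{k}r\in(w)$ with $r=p(z)+bw$, then $z^{k}r=z^{k}p(z)+2^{k}bw$, and comparing with the basis forces $p=0$, so $z^{k}r=2^{k}bw$. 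Since the element $2^{k'+t'-t-1}w$ produced above lies in $\i(X)=(z^{k})$ and also in $(w)$, it lies in $2^{k}\Z\cdot w$, i.e.\ $2^{k}\mid 2^{k'+t'-t-1}$ in $\Z$, hence $k\le k'+t'-t-1$. Rearranging gives $k(X)+t+1\le k(X')+t'$, as desired. (The extreme case $k(X)=0$ needs no separate treatment: then $2^{k}=1$ divides everything, and indeed $t'>t$ already forces $k(X')+t'\ge t+1$.)

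I do not anticipate a genuine obstacle: the geometric and topological content is entirely inherited from Lemma~\ref{lem:map2}, and what remains is a one-line computation in the explicitly presented ring $R(G)$. The only things to watch are bookkeeping items — keeping the Bott-periodicity identifications of $\tK_G(X^{S^1})$ and $\tK_G((X')^{S^1})$ with $R(G)$ consistent throughout, exactly as in the proof of Lemma~\ref{lem:map2}, and verifying that the exponent $k'+t'-t-1$ is genuinely $\ge 0$ so that the divisibility $2^{k}\mid 2^{k'+t'-t-1}$ is meaningful.
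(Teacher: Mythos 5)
Your proof is correct and follows essentially the same route as the paper: inherit the fact that $2^{k'+t'-t-1}w\in\i(X)$ from the proof of Lemma~\ref{lem:map2}, then use $\i(X)=(z^{k})$ to see that the only multiples of $w$ in $\i(X)$ are $\Z\cdot 2^{k}w$, forcing $k\le k'+t'-t-1$. Your phrasing via the intersection $(z^k)\cap(w)=2^k\Z w$ is just a mild repackaging of the paper's observation that an element $z^k(\lambda w+P(z))$ of $(z^k)$ is a multiple of $w$ only when $P=0$.
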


\begin{proof}
Since $X$ is $K_G$-split, we must have $\i(X) = (z^k)$ where $k = k(X)$. The only modification is now in the last step of the proof of Lemma~\ref{lem:map2}. We know that $ 2^{k'+t'-t-1}w \in \i(X)$. An arbitrary element of $\i(X)$ is of the form $z^k(\lambda w + P(z))= \lambda 2^{k}w + z^kP(z),$ where $\lambda \in \Z$ and $P$  is a polynomial in $z$. For such an element to be a multiple of $w$ we must have $P(z)=0$. We get that $2^{k'+t'-t-1}w = \lambda 2^kw$, and hence $k' + t'-t-1 \geq k$.
\end{proof}

Finally, we mention the behavior of $k$ under equivariant Spanier-Whitehead duality. Let $V$ be a finite dimensional representation of $G$. Recall from \cite[Section XVI.8]{MayBook} that two pointed, finite $G$-spaces $X$ and $X'$ are {\em equivariantly $V$-dual} if there exist $G$-maps $\eps: X' \wedge X \to V^+$ and $\eta: V^+ \to X \wedge X'$ such that the following two diagrams are stably homotopy commutative:
$$\xymatrix{
V^+ \wedge X \ar[r]^{\eta \wedge \id} \ar[dr]^{\gamma} & X \wedge X' \wedge X  \ar[d]^{\id \wedge \eps}\\
& X \wedge V^+
}
 \ \  \ \  \ \ \ 
 \xymatrix{
 X' \wedge V^+ \ar[r]^{\id \wedge \eta} \ar[d]_{\gamma} & X' \wedge X \wedge X  \ar[d]^{\eps \wedge \id} \\
 V^+ \wedge X' \ar[r]_{r \wedge \id} & V^+ \wedge X',
 }
$$
where $r: V^+ \to V^+$ is the sign map, $r(v)=-v$, and $\gamma$ are the transpositions.

\begin{lemma}
\label{lem:dual}
Let $X$ and $X'$ be spaces of type $\swf$ at levels $2t$ resp. $2t'$, such that $X$ and $X'$ are equivariantly $V$-dual for some $V \cong \tC^s \oplus \H^l$, with $s, l \geq 0$. Then:
$$ k(X) + k(X') \geq l.$$
\end{lemma}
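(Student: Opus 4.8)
The plan is to exploit the duality pairing $\eps: X' \wedge X \to V^+$ at the level of reduced $K$-theory, together with the two restriction maps to the $S^1$-fixed point sets, and then run an argument parallel to the proof of Lemma~\ref{lem:map2}. First I would record what the duality gives us on $S^1$-fixed points. Since $(X \wedge X')^{S^1} = X^{S^1} \wedge (X')^{S^1}$ and $V^{S^1} = (\tC^s)^{S^1} = \tC^s$ (the $\H^l$ summand has no $S^1$-fixed vectors beyond the origin), restricting $\eps$ and $\eta$ to $S^1$-fixed point sets exhibits $X^{S^1} \simeq (\tC^t)^+$ and $(X')^{S^1} \simeq (\tC^{t'})^+$ as equivariantly $\tC^s$-dual spaces. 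Applying $\eps^{S^1}$ after the Bott identifications $\tK_G(X^{S^1}) \cong R(G)$, $\tK_G((X')^{S^1}) \cong R(G)$ yields a pairing $R(G) \otimes R(G) \to R(G)$ which, by functoriality of Bott (Fact~\ref{fact:bott}) and the identification of $\tK_G$ of representation spheres, is multiplication $R(G) \otimes R(G) \to R(G)$ followed by multiplication by the Bott class $b_{s\tC}$; in particular it is surjective onto a free rank-one submodule. The upshot is that $t + t' = s$, and more importantly the pairing on $S^1$-fixed $K$-theory is the \emph{standard} multiplication pairing on $R(G)$ (up to the unit $b_{s\tC}$).

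Next I would transport the ideals $\i(X), \i(X')$ through this pairing. The duality maps $\eps, \eta$ produce, via Fact~\ref{fact:prod} and Fact~\ref{fact:zero}, a pairing $\tK_G(X) \otimes \tK_G(X') \to \tK_G(V^+) \cong R(G)$, compatible under the restriction maps $\iota^*, (\iota')^*$ with the multiplication pairing on the $S^1$-fixed point sets just described. Concretely: if $a \in \tK_G(X)$ restricts to $\alpha \cdot b_{t\tC}$ and $a' \in \tK_G(X')$ restricts to $\alpha' \cdot b_{t'\tC}$ with $\alpha \in \i(X)$, $\alpha' \in \i(X')$, then the image of $a \otimes a'$ in $\tK_G(V^+) \cong R(G)$ restricts, on the $S^1$-fixed point set $(V^+)^{S^1} = (\tC^s)^+$, to $\alpha \alpha' \cdot b_{s\tC}$. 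But $\tK_G(V^+) \to \tK_G((V^+)^{S^1})$ is itself of the form $\i(V^+) \cdot b_{s\tC}$, and since $V^+ = (\tC^s \oplus \H^l)^+$ is a (split) representation sphere, Lemma~\ref{lem:susp} and the examples in Section~\ref{sec:exk} give $\i(V^+) = (z^l)$. Therefore $\alpha \alpha' \in (z^l)$ for all $\alpha \in \i(X)$, $\alpha' \in \i(X')$, i.e. $\i(X) \cdot \i(X') \subseteq (z^l)$.

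Finally I would extract the numerical inequality. Pick $x \in \i(X)$ with $wx = 2^{k(X)}w$ and $x' \in \i(X')$ with $wx' = 2^{k(X')}w$. Then $xx' \in \i(X)\i(X') \subseteq (z^l)$, so $xx' = z^l \cdot g$ for some $g \in R(G)$. Multiplying by $w$ and using $wz = 2w$ repeatedly, $w \cdot xx' = 2^{k(X)+k(X')} w$ on one hand, and $w z^l g = 2^l w g$ on the other; writing $g = \lambda + (\text{terms in }\a)$ and applying the augmentation-type reduction (multiplication by $w$ kills nothing but forces $wg = (\text{integer})w$, and reducing mod the relation $w^2 = 2w$, $wz=2w$ shows $wg \in w\Z$ with $wg = g(0,0)\cdot w$ modified by powers of $2$), one concludes $2^{k(X)+k(X')} \in 2^l \Z$, hence $k(X) + k(X') \geq l$. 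The one genuinely delicate point is making the compatibility in the previous paragraph precise — i.e. checking that the Spanier-Whitehead pairing in $\tK_G$ really does intertwine the $\iota^*$ maps with the multiplication pairing on $R(G)$ up to the Bott class $b_{s\tC}$, with the correct $\H^l$ contribution $\lambda_{-1}(\H^l) = z^l$ appearing as the ``Euler class'' discrepancy (Fact~\ref{fact:bott2}). This is the step I expect to be the main obstacle, since it requires tracking the duality diagrams through suspension isomorphisms and the non-complex representation $\H^l$ carefully; everything after that is formal manipulation in $R(G) = \Z[w,z]/(w^2 - 2w, zw - 2w)$.
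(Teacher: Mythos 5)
Your overall strategy is essentially the paper's, reorganized: the paper first uses Lemma~\ref{lem:map1} applied to $\eps$ and $\eta$ to prove $\i(X\wedge X')=\i(V^+)=(z^l)$ and hence $k(X\wedge X')=l$, and then invokes the elementary inequality $k(X\wedge X')\leq k(X)+k(X')$ coming from the product (Fact~\ref{fact:prod}); you merge these two steps into the single statement $\i(X)\cdot\i(X')\subseteq(z^l)$, which is equivalent and from which the numerical conclusion follows as you compute. Your final arithmetic in $R(G)$ is correct (if $xx'=z^lg$ then $w\cdot xx'=2^lwg$ with $wg\in w\Z$, forcing $2^{k(X)+k(X')}\in 2^l\Z$). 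One small directional slip: the pairing into $\tK_G(V^+)$ should be $\tK_G(X)\otimes\tK_G(X')\to\tK_G(X\wedge X')\xrightarrow{\eta^*}\tK_G(V^+)$ using $\eta$, not $\eps$ — the map $\eps^*$ goes the other way, from $\tK_G(V^+)$ into $\tK_G(X'\wedge X)$.

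The genuine gap is exactly the one you flag as ``the main obstacle,'' and Bott functoriality alone does not fill it. Knowing that $X^{S^1}\simeq(\tC^t)^+$ and $(X')^{S^1}\simeq(\tC^{t'})^+$ are $\tC^s$-dual via $\eps^{S^1},\eta^{S^1}$ gives you the dimension count $t+t'=s$, but a priori $\eps^{S^1}:(\tC^{t'})^+\wedge(\tC^t)^+\to(\tC^s)^+$ is just \emph{some} $G$-map between representation spheres of the same dimension; its induced map on $\tK_G$ is multiplication by an element of $R(G)$ which could fail to be a unit, in which case your claimed compatibility of the pairing with $\iota^*,(\iota')^*$ (with only the expected $z^l$ ``Euler class'' discrepancy) would break. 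What you actually need is that $\eps^{S^1}$ and $\eta^{S^1}$ are $G$-homotopy equivalences. The paper gets this by a Borsuk--Ulam/degree argument: view $\eps^{S^1},\eta^{S^1}$ as $\Z/2$-equivariant self-maps of $(\tC^s)^+$; their restrictions to the $\Z/2$-fixed locus form a duality between $S^0$ and $S^0$, hence are bijections, so each map is (up to $\Z/2$-equivalence) the unreduced suspension of a $\Z/2$-map $S(\tC^s)\to S(\tC^s)$, and such maps are classified up to $\Z/2$-equivalence by their (odd) degree; the duality diagrams force $d(\eps^{S^1})\,d(\eta^{S^1})=\pm1$, so both are $G$-homotopy equivalences. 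Once that is in place, your argument goes through: the commutative square relating $\eta^*$ on $\tK_G$ to $(\eta^{S^1})^*$ on fixed-point $\tK_G$ shows $\i(X\wedge X')\subseteq\i(V^+)=(z^l)$, and composing with the product gives $\i(X)\cdot\i(X')\subseteq(z^l)$ as you wanted.
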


\begin{proof}
Consider the duality maps $\eps$ and $\eta$. Their restriction to the $S^1$-fixed point sets induce a $V^{S^1}$-duality between $X^{S^1} \simeq (\tC^t)^+$ and $(X')^{S^1} \simeq (\tC^{t'})^+$. Since $V^{S^1} \cong \tC^s$, this implies that $t+t'=s$. Let us view $\eps^{S^1}$ and $\eta^{S^1}$ as $G$-equivariant (that is, $\Z/2$-equivariant) maps from the sphere $(\tC^s)^+$ to itself. Their restrictions to the $\Z/2$-fixed point sets induce a duality between $S^0$ and $S^0$, that is, a bijection. This means that, up to $\Z/2$-equivalence, the maps $\eps^{S^1}$ and $\eta^{S^1}$ are unreduced suspensions of $\Z/2$-equivariant maps from the unit sphere $S(\tC^s)$ to itself. Up to $\Z/2$-equivalence, such maps $S(\tC^s) \to S(\tC^s)$ are determined by their degree (which must be odd by the Borsuk-Ulam theorem); see \cite{Olum}. We conclude that the maps $\eps^{S^1}$ and $\eta^{S^1}$ are determined (up to $G$-homotopy equivalences) by their degrees $d(\eps^{S^1}), d(\eta^{S^1}) \in \Z$. The duality diagrams imply that $d({\eps^{S^1}}) d(\eta^{S^1}) = \pm 1$, so $\eps^{S^1}$ and $\eta^{S^1}$ must be $G$-homotopy equivalences. Applying Lemma~\ref{lem:map1} to both of these maps we deduce that:
\begin{equation}
\label{eq:dualy} 
k(X \wedge X') = k(V^+)=l.
\end{equation}

Next, recall from Fact~\ref{fact:prod} that we have a product map $\tK_G(X) \otimes  \tK_G(X') \to \tK_G(X \wedge X')$. If $x \in \i(X)$ and $x' \in \i(X')$ are such that $wx=2^{k(X)}w$ and $wx'=2^{k(X')}w$, then $xx' \in \i(X)$ and 
$$ w(x x') = 2^{k(X)}wx'= 2^{k(X)+k(X')} w.$$
We deduce that:
\begin{equation}
\label{eq:smash}
 k(X \wedge X') \leq k(X) + k(X').
 \end{equation}
Combining this with \eqref{eq:dualy}, the conclusion follows.
\end{proof}

To see an example when the inequality~\eqref{eq:smash} is strict, let $X$ be the space $\tilde G$ from Example~\ref{ex:G}, and let $X'$ be the space $\tilde T$ from Example~\ref{ex:torus}. We showed that $k(\tilde G) = k(\tilde T)=1$, and it is observed in \cite[Example 2.14]{swfh} that $\tilde G$ and $\tilde T$ are $\H$-dual. From Equation~\eqref{eq:dualy} we get that $ k(\tilde G \wedge \tilde T) = 1 < k(\tilde G) + k(\tilde T) = 2.$ 

\section{Pin(2)-equivariant Seiberg-Witten Floer K-theory}
\label{sec:swf}

In this section we use the methods in \cite{Spectrum} and \cite{swfh} to construct $\pin$-equivariant Seiberg-\!Witten Floer K-theory. We will start by working in the setting of rational homology spheres (equipped with a spin structure), but when we discuss applications we will specialize to integral homology spheres.

\subsection{Finite dimensional approximation}
\label{sec:fda}
Let us briefly review the construction of equivariant Seiberg-\!Witten Floer spectra. We refer to \cite{Spectrum} and \cite{swfh} for more details.

Let $Y$ be a rational homology three-sphere, $g$ is a metric on $Y$, $\s$ a spin structure on $Y$, and $\Spin$ the spinor bundle for $\s$. Consider the global Coulomb slice in the Seiberg-\!Witten configuration space:
$$V = i \ker d^* \oplus \Gamma(\Spin) \subset i\Omega^1(Y) \oplus \Gamma(\Spin).$$ 

Using the quaternionic structure on spinors, we find an action of the group $G=\pin$ on $V$. Precisely,  an element $e^{i\theta} \in S^1$ takes $(a, \phi)$ to $(a, e^{i\theta}\phi)$, whereas $j \in G$ takes $(a, \phi)$ to $(-a, j\phi)$.

Let $\rho:TY \to \text{End}(\Spin)$ denote the Clifford multiplication, and $\dirac : \Gamma(\Spin) \to \Gamma(\Spin)$ the Dirac operator. The Chern-Simons-Dirac functional $\csd: \Conf(Y, \s) \to \R$, given by:
\[
\csd(a,\phi) = \frac{1}{2} \bigl(\int_Y \langle \phi, \dirac \phi + \rho(a)\phi \rangle dvol - \int_Y a \wedge da \bigr), 
\]
is invariant under the $G$-action. Its gradient (in a suitable metric) is the Seiberg-\!Witten map, which decomposes as a sum 
$$ \ell + c: V \to V,$$
where $\ell$ is the linearization $\ell(a,\phi) = (*da, \dirac \phi)$. We refer to the gradient flow of $\csd$ as the Seiberg-\!Witten flow.

The map $\ell$ is an elliptic, self-adjoint operator. We denote by $\vnt$ the finite-dimensional subspace of $V$ spanned by the eigenvectors of $\ell$ with eigenvalues in the interval $(\tau,\nu]$. Note that, as a $G$-representation, $\vnt$ decomposes as a direct some of some copies of $\tR$ and some copies of $\H$. We write this decomposition as
$$ \vnt = \vnt(\tR) \oplus \vnt(\H).$$

Next, we consider the gradient flow of the restriction $\csd|_{\vnt}$, where $\nu \geq 0$ and $\tau \ll 0$. We view this as a finite dimensional approximation to the Seiberg-\!Witten flow. The eigenvalue cut-offs $\nu$ and $\tau$ can be chosen independently. However, for simplicity, we shall restrict to the case $\tau = - \nu$. 

We pick $R \gg 0$ (independent of $\nu$) such that all the finite energy Seiberg-\!Witten flow lines are inside the ball $B(R)$ in a suitable Sobolev completion of $V$. We then look at the approximate Seiberg-\!Witten flow on $\vnu$. It can be shown that the points lying on trajectories of this flow that stay inside $B(R)$ form an isolated invariant set. To this set one can associate an equivariant Conley index $I_{\nu}$, which is a pointed $G$-space, well-defined up to canonical $G$-homotopy equivalence. (Roughly, one can think of the Conley index as the quotient of $\vnu \cap B(R)$ by the subset of $\vnu \cap \del B(R)$ where the flow exits the ball.) The following facts are established in \cite{Spectrum, swfh}:

\begin{proposition}
\label{prop:conley}
$(a)$ The Conley index $I_{\nu}$ is a space of type $\swf$ at level $\dim V^0_{-\nu}(\tR)$.

\noindent $(b)$ When we vary the choices in its construction, the Conley index $I_{\nu}$ changes as follows:
\begin{enumerate}[(i)]
\item When we vary the radius $R$, it only changes by a $G$-equivalence;
\item When we change the cut-off $\nu$ to some $\nu' > \nu$, the space $I_{\nu'}$ is $G$-equivalent to the suspension of $I_{\nu}$ by the representation $V^{-\nu}_{-\nu'}$;
\item If we vary the Riemannian metric $g$ by a small homotopy, we can choose a cut-off $\nu$ such that the operator $\ell$ does not have $\nu$ or $-\nu$ as an eigenvalue during the homotopy. Then $I_{\nu}$ only changes by a $G$-equivalence.  
\end{enumerate}
\end{proposition}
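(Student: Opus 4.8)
The plan is to recall the construction of the $\pin$-equivariant Conley index from \cite{Spectrum,swfh} and verify the four asserted properties; compared with the $S^1$-equivariant theory of \cite{Spectrum}, the only genuinely new feature is that every map, homotopy and index pair must be chosen $G$-invariantly, which is harmless since $G=\pin$ is a compact Lie group. Throughout I keep the notation of Section~\ref{sec:fda}: the approximate Seiberg--Witten flow on $\vnu$ has vector field $\ell$ plus the $\vnu$-projection of the compact term $c$; the radius $R\gg0$ is chosen so that all finite-energy trajectories lie in $B(R)$; and $S_\nu\subseteq B(R)\cap\vnu$ is the isolated $G$-invariant set of points whose trajectory stays in $B(R)$ for all time, with equivariant Conley index $I_\nu$.

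For part (a): $S_\nu$ is an isolated invariant set of a smooth $G$-flow on a compact smooth $G$-manifold, so within the equivariant Conley index formalism of \cite{Spectrum,swfh} the index $I_\nu$ has the $G$-homotopy type of a finite $G$-CW complex. To compute $I_\nu^{S^1}$, observe that the $S^1$-fixed subspace of $\vnu$ is the reducible locus $\vnu(\tR)=\{\phi=0\}$, on which $c$ vanishes identically, so the restricted flow is the linear flow $\dot a=-*da$. Since $Y$ is a rational homology sphere, $*d$ is injective on coclosed $1$-forms, so the origin is the only trajectory that stays bounded, and its Conley index is the one-point compactification of the unstable subspace, which with the paper's sign convention is the negative-eigenvalue part $V^0_{-\nu}(\tR)\cong\tR^{s}$, $s=\dim V^0_{-\nu}(\tR)$; by the equivariant form of the statement that the fixed-point set of a Conley index computes the Conley index of the restricted flow, $I_\nu^{S^1}\simeq_G(\tR^s)^+$. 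Finally, $G$ acts freely on $\{(a,\phi):\phi\neq0\}$: the $S^1$-action is visibly free there, while an element $je^{i\theta}$ fixing $(a,\phi)$ would force $a=0$ and $e^{-i\theta}j\phi=\phi$, impossible for $\phi\neq0$ since $\phi$ and $j\phi$ are $\C$-linearly independent. Hence $G$ acts freely on $I_\nu\setminus I_\nu^{S^1}$, so $I_\nu$ is a space of type $\swf$ at level $s$.

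Parts (b)(i) and (b)(iii) are instances of continuation invariance of the equivariant Conley index. For (i), all sufficiently large radii trap the same a priori bounded $G$-invariant set, and the index does not depend on the isolating neighbourhood, so the resulting indices are canonically $G$-equivalent. For (iii), a small metric homotopy with no eigenvalue of $\ell$ crossing $\pm\nu$ keeps $\vnu$ of constant dimension and $G$-representation type, and produces a continuous family of $G$-flows with common isolating neighbourhood $B(R)$, so continuation gives a $G$-equivalence between the endpoints. For (b)(ii), replacing $\nu$ by $\nu'>\nu$ (and $\tau=-\nu$ by $-\nu'$) enlarges the ambient space to
$$V^{\nu'}_{-\nu'}=V^{-\nu}_{-\nu'}\oplus\vnu\oplus V^{\nu'}_{\nu},$$
and on the two new summands $\ell$ has eigenvalues of absolute value $\geq\nu$, which dominate the uniformly bounded term $c$ on $B(R)$; this confines $S_{\nu'}$ to a region where the flow can be deformed, $G$-equivariantly and through flows having $B(R)$ as isolating neighbourhood, to the product of the flow on $\vnu$ with the linear flow $\dot y=-\ell y$ on $V^{-\nu}_{-\nu'}\oplus V^{\nu'}_{\nu}$. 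By continuation and the multiplicativity of the Conley index under products, $I_{\nu'}$ is $G$-equivalent to $I_\nu$ smashed with the Conley index of that linear flow; the linear flow is attracting on $V^{\nu'}_{\nu}$ and repelling on $V^{-\nu}_{-\nu'}$, so its index is $(V^{-\nu}_{-\nu'})^+$, and therefore $I_{\nu'}\simeq_G\Sigma^{V^{-\nu}_{-\nu'}}I_\nu$.

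I expect the \emph{main obstacle} to be the deformation-to-a-product step in (b)(ii): one must make precise, uniformly in the relevant parameters, that the large linear eigenvalues in the new directions push $S_{\nu'}$ into a product-like region and that the straightening homotopy can be carried out through flows sharing a fixed $G$-isolating neighbourhood — these are exactly the estimates of \cite{Spectrum}, now to be tracked $G$-equivariantly. A secondary prerequisite is importing from \cite{Spectrum} the a priori bound $R$, independent of $\nu$, without which none of the finite-dimensional Conley indices would even be defined.
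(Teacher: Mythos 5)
Your proposal is correct and follows the same line of argument as the references the paper itself cites: the paper states Proposition~\ref{prop:conley} without proof, simply attributing it to \cite{Spectrum, swfh}, and your sketch — identifying $I_\nu^{S^1}$ with the Conley index of the linear reducible flow $(V^0_{-\nu}(\tR))^+$, checking freeness of $G$ on $\{\phi\neq 0\}$, invoking continuation invariance for (b)(i) and (b)(iii), and deforming to a product flow for (b)(ii) — is exactly the argument carried out in those references. Your identification of the two genuine technical inputs (the $\nu$-independent a priori bound $R$ and the $G$-equivariant deformation-to-a-product estimates) matches where the real work lies in \cite{Spectrum}, adapted $G$-equivariantly in \cite{swfh}.
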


For a fixed metric $g$, we can build a universe made of the negative eigenspaces of $\ell$ (together with infinitely many copies of the trivial $G$-representation), and construct a spectrum $\swf(Y, \s, g)$ as the formal de-suspension $\Sigma^{-V^0_{-\nu}} I_{\nu}$; see \cite[Section 3.4]{swfh}. In view of properties (i) and (ii) in Proposition~\ref{prop:conley}(b), the spectrum $\swf(Y, \s, g)$ is independent of $R$ and $\nu$, up to $G$-equivalence. We call $\swf(Y, \s, g)$ the {\em Seiberg-\!Witten Floer spectrum} of the triple $(Y, \s, g)$.

When we vary the metric $g$, it is difficult to identify the universes that provide coordinates for our spectra. Note that, for fixed $\nu$, the dimension of $V^0_{-\nu}$ changes according to the spectral flow of the operator $\ell = *d \oplus \dirac$. The operator $*d$ has trivial spectral flow, but the Dirac operator has spectral flow given by the formula
\begin{equation}
\label{eq:spflow}
 \text{S.F.}(\dirac) = n(Y, \s, g_0) - n(Y, \s, g_1).
 \end{equation}
Here, $g_0$ and $g_1$ are the initial and final metrics, and the quantities 
$$n(Y, \s, g_i) \in \tfrac{1}{8} \Z \subset \Q$$ are linear combinations of the eta invariants associated to $*d$ and $\dirac$, for each metric. Alternatively, given a metric $g$ on $Y$, we can pick a compact spin 
four-manifold $W$ with boundary $Y$, let $\Dirac(W)$ be the Dirac operator on $W$ (with Atiyah-Patodi-Singer boundary conditions), and set
\begin{equation}
\label{eq:n}
 n(Y, \s, g) = \ind_{\C}\Dirac(W) + \frac{\sigma(W)}{8}.
 \end{equation}

Although $n(Y, \s, g)$ is in general one-eighth of an integer, as we vary $g$ (and keep $Y$ and $\s$ fixed) it changes by elements of $\Z$. Also, when $Y$ is an integral homology sphere, we have $n(Y, \s, g) \in \Z$, and its parity is given by the Rokhlin invariant $\mu$:
\begin{equation}
\label{eq:nmod2}
n(Y, \s, g) \mod 2 = \mu(Y) \in \Z/2.
\end{equation}

Looking at \eqref{eq:spflow}, one is prompted to consider a formal de-suspension of $\swf(Y, \s, g)$ by $n(Y, \s, g)/2$ copies of the representation $\H$. (The factor of $1/2$ comes from the fact that \eqref{eq:spflow} counts complex dimensions of the eigenspaces of $\dirac$, rather than quaternionic dimensions.) This produces an invariant of $Y$ in the form of an equivalence class of formally de-suspended spaces. The relevant definition is given below.

\subsection{Stable even equivalence}
Consider the set of triples $(X, m, n)$, where $X$ is a space of type $\swf$ at an even level, $m \in \Z$ and $n \in \Q$. We introduce the following equivalence relation on such triples:
\begin{definition}
\label{def:see}
We say that $(X, m,n)$ is {\em stably even equivalent} to $(X', m', n')$ if $n-n' \in \Z$, and there exist $M, N, r \geq 0$ and a $G$-homotopy equivalence 
$$ \Sigma^{r \R} \Sigma^{(M-m) \tC} \Sigma^{ (N-n) \H} X \xrightarrow{\sim} \Sigma^{r \R} \Sigma^{(M-m') \tC} \Sigma^{(N-n') \H} X.$$
\end{definition}
Thus, a triple could be thought of as a ``$G$-equivariant suspension spectrum,'' given by the formal de-suspension of $X$ by $m$ copies of the representation $\tC$ and $n$ copies of the representation $\H$.

We denote by $\E$ the set of stable even equivalence classes of triples $(X, m, n)$. Informally, we will refer to the elements of $\E$ as {\em spectrum classes}.

If $(X, m, n)$ is a triple as above, we define its (reduced) equivariant Borel cohomology, with coefficients in an Abelian group $A$, by
$$ \tH^*_G(X, m, n; A) := \tH_G^{*+2m+4n}(X; A)$$
and its (reduced) equivariant K-cohomology by
$$ \tK^*_G(X, m, n) := \tK_G^{*+2m+4n}(X).$$
We also set
$$k(X, m, n) = k(X)-n,$$
where $k$ is the invariant defined in Section~\ref{sec:k}.

\begin{lemma}
\label{lem:evens}
Let $(X, m, n)$ be a triple as above. Then, the following are invariants of the spectrum class $\S=[(X, m, n)] \in \E$: 
\begin{itemize}
\item The isomorphism class of Borel cohomology, $\tH^*_G(\S; A) := [\tH^*_G(X, m, n; A)]$, as a graded module over $H^*(BG; A)$; 
\item The isomorphism class of equivariant K-cohomology, $\tK_G^*(\S) := [\tK^*_G(X, m, n)]$, as a graded module over $R(G)$;
\item The quantity $k(\S) := k(X, m, n) \in \Q$.
\end{itemize}
\end{lemma}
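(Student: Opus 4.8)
The plan is to verify invariance under each of the three types of suspension operation that generate the stable even equivalence relation, plus the shift in the parameters $m$ and $n$. Concretely, by Definition~\ref{def:see}, $[(X,m,n)]=[(X',m',n')]$ means that after suspending both sides by copies of $\R$, $\tC$, and $\H$ one obtains $G$-homotopy equivalent spaces, and that $n-n'\in\Z$. So it suffices to check: (1) replacing $X$ by $\Sigma^{\tC} X$ while decreasing $m$ by $1$ leaves all three quantities unchanged; (2) replacing $X$ by $\Sigma^{\H} X$ while decreasing $n$ by $1$ leaves all three quantities unchanged; (3) replacing $X$ by $\Sigma^{r\R}X$ (with $r$ even, or pairing it with an extra $\R$ to make it even) leaves them unchanged; and (4) an honest $G$-homotopy equivalence $X\simeq X'$ induces isomorphisms of the module structures and preserves $k$.

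First I would handle Borel cohomology and K-cohomology together, since the arguments are formally identical. For an $\R$-suspension, $\tH_G^{*}(\Sigma^{\R}X)\cong \tH_G^{*-1}(X)$ and $\tK_G^{*}(\Sigma^{\R}X)\cong\tK_G^{*-1}(X)$ (the latter by the definition of odd K-groups together with Bott periodicity), but the grading conventions $\tH^*_G(X,m,n)=\tH^{*+2m+4n}_G(X)$ and the analogous K-theory convention are set up precisely so that the shift is absorbed; one checks the bookkeeping with $r\R$ and confirms the module structure over $H^*(BG;A)$ resp.\ $R(G)$ is respected because the suspension isomorphism is one of modules. For a $\tC$-suspension paired with decreasing $m$ by one: $\Sigma^{\tC}X$ suspends the degree by $2$, and $2(m-1)+4n = 2m+4n-2$, so again the two shifts cancel; the relevant isomorphism for K-theory is equivariant Bott periodicity (Fact~\ref{fact:bott}), which is an isomorphism of $R(G)$-modules, and for Borel cohomology it is the Thom isomorphism for the $G$-vector bundle $\tC$ over a point, again compatible with the $H^*(BG)$-action. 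For an $\H$-suspension paired with decreasing $n$ by one: $\Sigma^{\H}X$ shifts degree by $4$, and $2m+4(n-1)=2m+4n-4$, so the shifts cancel; again Bott periodicity for the complex representation $\H$ gives the $R(G)$-module isomorphism, and the Thom isomorphism handles Borel cohomology. Finally an outright $G$-homotopy equivalence gives the module isomorphisms by functoriality.

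For the quantity $k(\S)=k(X)-n$ I would invoke the lemmas already proved. Lemma~\ref{lem:map0} shows $\i(X)$ and hence $k(X)$ are unchanged under $\R$-suspension (so $k(X)-n$ is unchanged in case (3)), and the same lemma, or just functoriality of the Bott isomorphism, handles a bare $G$-homotopy equivalence. Lemma~\ref{lem:susp} gives exactly what is needed for the representation suspensions: $k(\Sigma^{\tC}X)=k(X)$, so $k(\Sigma^{\tC}X)-n = k(X)-n$ matches the requirement that $m$ not appear in the formula at all; and $k(\Sigma^{\H}X)=k(X)+1$, so $k(\Sigma^{\H}X)-(n-1) = k(X)+1-n+1$— wait, one must be careful with the direction. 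In Definition~\ref{def:see} the de-suspension index $n$ is what is fixed, and the equivalence suspends $X$ by $(N-n)\H$; increasing the formal de-suspension by one copy of $\H$ means replacing $(X,m,n)$ by $(\Sigma^{\H}X,m,n+1)$, and then $k(\Sigma^{\H}X)-(n+1) = (k(X)+1)-(n+1) = k(X)-n$, as desired. So $k(\S)$ is well-defined. The only genuinely delicate point—the main obstacle, such as it is—is to make sure the parity/integrality hypothesis $n-n'\in\Z$ in Definition~\ref{def:see} is actually used consistently: $k(\S)\in\Q$ takes values in a single coset of $\Z$ determined by $\S$, and it is only the $\Z$-shifts in $n$ that are allowed, which is exactly why $k(X)-n$ (rather than, say, $k(X)-2n$) is the right normalization and why no fractional ambiguity creeps in. Assembling these observations, all three items are invariants of $\S$, completing the proof.
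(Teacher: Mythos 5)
Your proof is correct in substance and follows essentially the same route as the paper's (which simply cites invariance of the two cohomology theories under complex-representation suspension, together with Lemmas~\ref{lem:map0} and~\ref{lem:susp} for $k$). One small slip: in your enumeration of generating moves you say ``replacing $X$ by $\Sigma^{\tC}X$ while \emph{decreasing} $m$ by $1$'' (and likewise for $\H$), but the correct shift is to \emph{increase} $m$ by $1$: since $\tH^{j}_G(\Sigma^{\tC}X;A)\cong\tH^{j-2}_G(X;A)$, one needs $2m'+4n-2=2m+4n$, i.e.\ $m'=m+1$, and the same arithmetic gives $(\Sigma^{\H}X,m,n+1)\sim(X,m,n)$, consistent with Definition~\ref{def:see} (taking $M,N$ large, the triples $(\Sigma^{\tC}X,m+1,n)$ and $(X,m,n)$ suspend to literally the same space). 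You catch this for the $\H$ case at the end, and the error never threatens the conclusion — for $k(\S)$ the $m$-index is irrelevant, and for the two cohomology theories the bookkeeping works out once the sign is flipped — so the argument goes through.
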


\begin{proof}
The first two statements follow from the invariance of the two theories under suspensions by complex representations; compare \cite[Remark 2.3]{swfh} and \ref{fact:bott}.

The third statement follows from the behavior of $k$ under $G$-equivalences (after stabilization by copies of $\R$) and under suspensions by $\tC$ and $\H$. These were established in Lemma~\ref{lem:map0} and Lemma~\ref{lem:susp}, respectively. 
\end{proof}

\begin{remark} 
In the definition of stable even equivalence we only allowed de-suspensions by copies of $\tC = \tR \oplus \tR$ and $\H$, which are complex representations of $G$. We did this because equivariant cohomology and equivariant K-theory are invariant (up to a shift in degree) under such representations, whereas they are not invariant under suspending by an arbitrary real representation such as $\tR$. If we had been interested only in the equivariant cohomology with $\Z/2$ coefficients (as we were in \cite{swfh}), then we could have allowed de-suspensions by $\tR$, and dropped the condition on $X$ to be at an even level.

Note also the presence of arbitrary suspensions by $\R$ in Definition~\ref{def:see}. This is not  necessary for constructing a $3$-manifold invariant as a spectrum class (which we do in Section~\ref{sec:swfclass} below), but it makes computations more accessible. For example, when we compute some spectrum classes in Section~\ref{sec:brieskorn}, we will be free to use standard facts from equivariant stable homotopy.
\end{remark}

\subsection{The Seiberg-Witten Floer spectrum class}
\label{sec:swfclass}
If $Y$ is a rational homology sphere with a spin structure $\s$, let $g, R$ and $\nu$ be as in Subsection~\ref{sec:fda}. Recall from Proposition~\ref{prop:conley}(a) that the Conley index $I_{\nu}$ is a space of type $\swf$ at level $\dim V^0_{-\nu}(\tR)$. Define 
$$ \S(Y, \s) = 
\begin{cases} [(I_{\nu}, \tfrac{1}{2}\dim V(\tR)^0_{-\nu}, \dim_{\H} V(\H)^0_{-\nu} + \tfrac{1}{2}n(Y, \s, g)] & \text{if} \ I_{\nu} \text{ is at an even level,} \\ 
[(\Sigma^{\tR} I_{\nu},  \tfrac{1}{2}(\dim V(\tR)^0_{-\nu} + 1), \dim_{\H} V(\H)^0_{-\nu} + \tfrac{1}{2}n(Y, \s, g)] & \text{if} \ I_{\nu} \text{ is at an odd level.}
\end{cases}
$$

\begin{proposition}
\label{prop:inv}
The spectrum class $\S(Y, \s) \in \E$ is an invariant of the pair $(Y, \s)$.
\end{proposition}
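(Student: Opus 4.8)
The plan is to show that $\S(Y,\s)$ is independent of the auxiliary choices made in its construction, namely the radius $R$, the eigenvalue cut-off $\nu$, and the metric $g$; these are precisely the choices entering the definition of the Conley index $I_\nu$ together with the correction term $n(Y,\s,g)$. The tool for each case is Proposition~\ref{prop:conley}(b), which records how $I_\nu$ transforms, combined with the fact (Definition~\ref{def:see}) that stable even equivalence absorbs suspensions by $\tC$ and $\H$ as well as de-suspension shifts in the integer entry $m$ and the rational entry $n$. So the proof is really a bookkeeping argument: check that each elementary move changes the representatives $(I_\nu,\tfrac12\dim V(\tR)^0_{-\nu},\dim_\H V(\H)^0_{-\nu}+\tfrac12 n(Y,\s,g))$ in a way that is invisible in $\E$.

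First I would dispense with $R$: by Proposition~\ref{prop:conley}(b)(i), varying $R$ changes $I_\nu$ only by a $G$-homotopy equivalence, and it changes none of the dimension counts or $n(Y,\s,g)$, so the triple is literally unchanged up to $G$-equivalence. Next, varying $\nu$ to some $\nu'>\nu$: by (b)(ii), $I_{\nu'}\simeq \Sigma^{V^{-\nu}_{-\nu'}} I_\nu$, and since $V^{-\nu}_{-\nu'}$ splits as $V^{-\nu}_{-\nu'}(\tR)\oplus V^{-\nu}_{-\nu'}(\H)$, this is a suspension by copies of $\tR$ (hence, noting that the number of $\tR$ summands changes by an even amount because we only compare even-level representatives, by copies of $\tC$) and copies of $\H$. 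The dimension shifts in the $m$- and $n$-entries are exactly compensated: $\tfrac12\dim V(\tR)^0_{-\nu'} - \tfrac12\dim V(\tR)^0_{-\nu}$ equals the number of $\tC$'s suspended, and likewise for $\H$, while $n(Y,\s,g)$ is untouched. (If the parity of the level flips, the first branch of the definition of $\S(Y,\s)$ is replaced by the second, and an extra $\Sigma^{\tR}$ is inserted on both sides; one checks the two branches agree by suspending once more by $\tR$, which is allowed after stabilizing, using Lemma~\ref{lem:map0}-style reasoning — this is why the level-parity dichotomy was built into the definition.) Thus the spectrum class is independent of $\nu$.

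The main point, and the one requiring the formula \eqref{eq:spflow}, is independence of the metric $g$. Given two metrics $g_0,g_1$, connect them by a path; along a generic such path, as in Proposition~\ref{prop:conley}(b)(iii), we may choose $\nu$ avoiding $\pm\nu$ as eigenvalues of $\ell$ except at finitely many crossing times, so between crossings $I_\nu$ changes only by $G$-equivalence. At a crossing an eigenvalue of $\ell = {*}d\oplus\dirac$ passes through $0$ from above $-\nu$, changing $\dim V^0_{-\nu}$. Since $*d$ has trivial spectral flow, all such crossings come from $\dirac$, which contributes to $V(\H)^0_{-\nu}$, and by \eqref{eq:spflow} the net change in $\dim_\C V(\H)^0_{-\nu}$ over the whole path is $n(Y,\s,g_0)-n(Y,\s,g_1)$. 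The half of this in quaternionic dimensions is $\tfrac12(n(Y,\s,g_0)-n(Y,\s,g_1))$, which is exactly the change in the $\tfrac12 n(Y,\s,g)$ summand — so the quaternionic-de-suspension entry $\dim_\H V(\H)^0_{-\nu}+\tfrac12 n(Y,\s,g)$ is invariant, as is the real entry. Each individual crossing produces an honest suspension of $I_\nu$ by $\H$ (matched by a shift in the $n$-entry) or by $\tC$, all invisible in $\E$; and the condition $n-n'\in\Z$ in Definition~\ref{def:see} is satisfied because $n(Y,\s,g)$ changes by integers under metric variation. The hardest part of the write-up is this metric step: one has to argue carefully that the local-to-global spectral-flow accounting matches the Conley-index transformations crossing-by-crossing, and handle the parity subtlety when a crossing flips whether $I_\nu$ sits at an even or odd level — but all the analytic content is already packaged in Proposition~\ref{prop:conley} and formula \eqref{eq:spflow}, so what remains is careful bookkeeping of suspensions against index shifts.
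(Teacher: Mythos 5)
Your proof is correct and takes essentially the same approach as the paper's (which is a one-line reference to Proposition~\ref{prop:conley}(b) and the spectral flow formula \eqref{eq:spflow}); you have simply written out the bookkeeping that the paper leaves implicit. One small inaccuracy in the metric step: when an eigenvalue of $\dirac$ crosses $0$ (with $\nu$ held fixed and $\pm\nu$ avoiding the spectrum), the Conley index $I_\nu$ does \emph{not} get suspended by $\H$ --- it is built from the flow on $V^\nu_{-\nu}$, which is unchanged. What changes is the internal splitting $V^\nu_{-\nu} = V^0_{-\nu}\oplus V^\nu_0$, hence the formal de-suspension index $\dim_\H V(\H)^0_{-\nu}$ in the triple; the statement ``each individual crossing produces an honest suspension of $I_\nu$ by $\H$'' conflates the zero-crossing mechanism (relevant for metric variation) with the $(-\nu)$-crossing mechanism (relevant for varying $\nu$). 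Your net computation that $\dim_\H V(\H)^0_{-\nu}+\tfrac12 n(Y,\s,g)$ is unchanged is nonetheless correct, and the ``or by $\tC$'' option never occurs since, as you note, $*d$ has no zero modes on a rational homology sphere, so $\dim V(\tR)^0_{-\nu}$ and the parity of the level are fixed under metric homotopy.
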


\begin{proof}
This is a consequence of Proposition~\ref{prop:conley}(b) and the formula \eqref{eq:spflow} for the spectral flow. Compare \cite[Theorem 1]{Spectrum}.
\end{proof}

We refer to $\S(Y, \s)$ as the {\em Seiberg-\!Witten Floer spectrum class of $(Y, \s)$}. 

In view of Lemma~\ref{lem:evens} and Proposition~\ref{prop:inv}, we define the {\em $G$-equivariant Seiberg-\!Witten Floer cohomology} of $(Y, \s)$, with coefficients in an Abelian group $A$, as
$$ \swfh^*_G(Y, \s; A) := \tH^*_G(\S(Y, \s); A).$$
(One can also define equivariant Seiberg-\!Witten Floer homology in a similar manner.) Further, we define the {\em $G$-equivariant Seiberg-\!Witten Floer K-cohomology} of $(Y, \s)$ as
$$ \swfk^*_G(Y, \s) := \tK^*_G(\S(Y, \s)).$$
This group in degree $2n(Y, \s, g) \in \tfrac{1}{4}\Z$ can be called the {\em $G$-equivariant Seiberg-\!Witten Floer K-theory} of $(Y, \s)$. 

We define:
\begin{equation}
\label{eq:kappa}
 \kappa(Y, \s) := 2k(\S(Y, \s)) \in \tfrac{1}{8} \Z \subset \Q.
 \end{equation}
We say that the pair $(Y, \s)$ is {\em Floer $K_G$-split} if, for $\nu \gg 0$,  either $I_{\nu}$ or $\Sigma^{\tR} I_{\nu}$ (depending on the parity of the level of $I_{\nu}$) is $K_G$-split in the sense of Definition~\ref{def:ksplit}; cf. Definition~\ref{def:FloerSplit} from the introduction.  

If $Y$ is an integral homology sphere, then it has a unique spin structure $\s$, which we drop from the notation. Since $n(Y, g) \in \Z$, in this case $ \swfh^*_G(Y ; A)$ and $\swfk^*_G(Y)$ are integer-graded, and we have $ \kappa(Y) \in \Z$.

\subsection{Cobordisms}
Suppose $W$ is a four-dimensional, oriented cobordism between rational homology spheres $Y_0$ and $Y_1$, such that $b_1(W)=0$. Further, assume $W$ is equipped with a Riemannian metric $g$ and a spin structure $\t$. It is shown in \cite[Section 9]{Spectrum} and \cite[Section 3.6]{swfh} that one can do finite dimensional approximation for the Seiberg-\!Witten equations on $W$ to obtain a map:
\begin{equation}
\label{eq:cob}
f: \Sigma^{m_0 \tR} \Sigma^{n_0 \H} (I_0)_{\nu} \longrightarrow   \Sigma^{m_1 \tR} \Sigma^{n_1 \H} (I_1)_{\nu}.
\end{equation}
Here, $(I_0)_{\nu}$ and $(I_1)_{\nu}$ are the Conley indices for the approximate Seiberg-\!Witten flows on $Y_0$ and $Y_1$, respectively, corresponding to an eigenvalue cut-off $\nu \gg 0$. Let also $V_i$  denote the global Coulomb slice on $Y_i$, for $i=0,1$. The differences in suspension indices in \eqref{eq:cob} are:
$$ m_0 - m_1 = \dim_{\R} \bigl( (V_1)^0_{-\nu}(\tR)\bigr) - \dim_{\R} \bigl((V_0)^0_{-\nu}(\tR)\bigr) - b_2^+(W) $$ 
and
$$ n_0 - n_1 =   \dim_{\H} \bigl( (V_1)^0_{-\nu}(\H) \bigr) - \dim_{\H} \bigl( (V_0)^0_{-\nu}(\H) \bigr)  + n(Y_1, \t|_{Y_1}, g)/2 - n(Y_0, \t|_{Y_0}, g)/2 - {\sigma(W)}/16.$$
Moreover, the $S^1$-fixed point set of \eqref{eq:cob} is induced on the one-point compactifications by a linear injective map with cokernel of dimension $b_2^+(W)$.

Note that both the domain and the target of the map \eqref{eq:cob} are spaces of type $\swf$. The difference in their levels is $-b_2^+(W)$. If both levels happen to be even, then the difference in the values of $k$ for the domain and the target is
$$\frac{1}{2} \bigl( \kappa(Y_0) - \kappa(Y_1) - \sigma(W)/8\bigr).$$
 
We can now give the proofs of the main results advertised in the introduction:

\begin{proof}[Proof of Theorem~\ref{thm:main1}] Part (i) follows from the formula \eqref{eq:kappa} for $\kappa$, the definition of the spectrum class $\S(Y)$, and the fact that $n(Y, g)$ mod $2$ is the Rokhlin invariant; cf. \eqref{eq:nmod2}.

For part (ii), after doing surgery on loops, we can assume without loss of generality that $b_1(W)=0$. Consider the map \eqref{eq:cob} associated to the cobordism $W$. Since $b_2^+(W) =0$, 
the domain and target of \eqref{eq:cob} are at the same level. By suspending the map $f$ with $\tR$ if necessary, we can arrange that the common level is even. The conclusion then follows from Lemma~\ref{lem:map1}.

For part (iii), again we can assume that $b_1(W)=0$. If the intersection form on $W$ is $p(-E_8) \oplus q\hyp$, the difference in levels in \eqref{eq:cob} is $-q$. If $q=0$, we can simply apply part (ii). If $q > 0$ and $q$ is even, since $p=-\sigma(W)/8$, by applying Lemma~\ref{lem:map2} to \eqref{eq:cob} we get:
$$ \kappa(Y_0) + p \leq  \kappa(Y_1) + q.$$
If $q > 0$ and $q$ is odd, the best we can do is to take the connected sum of $W$ and a copy of $S^2 \times S^2$ to reduce to the case of $q$ even. We do this at the expense of weakening the bound above to: $\kappa(Y_0) + p -1 \leq  \kappa(Y_1) + q.$
\end{proof}

\begin{remark}
Parts (i) and (ii) of Theorem~\ref{thm:main1} admit straightforward generalizations to the case when $Y_0$ and $Y_1$ are rational homology spheres equipped with spin structures. There is also an analogue of part (iii)  which can be used to get constraints on the indefinite intersection forms of spin cobordisms between rational homology spheres; however, these intersection forms are not generally unimodular, so we cannot write them as $p(-E_8) \oplus q\hyp$. The bound in (iii) can be expressed instead in terms of the second Betti number and the signature of $X$. 
\end{remark}

\begin{proof}[Proof of Theorem~\ref{thm:main2}]
The same argument as in part (iii) of Theorem~\ref{thm:main1} applies here, except that now we can use Lemma~\ref{lem:map3} instead of Lemma~\ref{lem:map2}. When $q$ is even we get 
\begin{equation}
\label{eq:qeven}
 \kappa(Y_0) + p + 1 \leq  \kappa(Y_1) + q.
 \end{equation}
By part (i) of Theorem~\ref{thm:main1} the parity of $\kappa(Y_0)-\kappa(Y_1)$ is the Rokhlin invariant of the boundary of $W$, so it is the same as the parity of $p$. Therefore, for parity reasons we can improve the inequality \eqref{eq:qeven} to
$$ \kappa(Y_0) + p + 2 \leq  \kappa(Y_1) + q.$$
When $q$ is odd, we add a copy of $S^2 \times S^2$ and we are left with the inequality \eqref{eq:qeven}.
\end{proof}

\begin{proof}[Proof of Corollary~\ref{cor:1bdry}]
In Section~\ref{sec:psc} below we will prove that $\S(S^3) = [(S^0, 0, 0)]$, so $S^3$ is Floer $K_G$-split and $\kappa(S^3)=0$. Assuming this, we can apply Theorem~\ref{thm:main2} to the complement of a ball in $W$.
\end{proof}

\begin{proof}[Proof of Theorem~\ref{thm:cor}]
Suppose such a decomposition exists. Applying Corollary~\ref{cor:1bdry} to the first piece $X_1$ we get $\kappa(Y_1) \geq 2+1-3 = 0.$ Next, apply Theorem~\ref{thm:main2} to the pieces $X_i$ for $i=1, \dots, r-1$. We obtain $\kappa(Y_{i+1}) \geq \kappa(Y_{i})$ for all such $i$, so $\kappa(Y_{r-1}) \geq \kappa(Y_1) \geq 0$. On the other hand, by applying Theorem~\ref{thm:main2} to the complement of a ball in $X_r$ we get $\kappa(Y_{r-1}) \leq -1$, a contradiction.

A similar argument can be used to exclude any decompositions of $X$ into $r$ spin pieces, each of signature $-16$, and glued along Floer $K_G$-split homology spheres.
\end{proof}

Here is one last result mentioned in passing in the introduction:

\begin{proposition}
\label{prop:duals}
If $Y$ is an oriented homology sphere and $-Y$ is the same manifold with the reverse orientation, then:
$$ \kappa(Y) +\kappa(-Y) \geq 0.$$
\end{proposition}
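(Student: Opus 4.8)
The plan is to realize $\kappa(Y) + \kappa(-Y) \geq 0$ as an instance of the Spanier--Whitehead duality bound from Lemma~\ref{lem:dual}. First I would recall the general principle, established in \cite{Spectrum, swfh}, that the Seiberg--Witten Floer spectrum classes of $Y$ and $-Y$ are equivariantly Spanier--Whitehead dual to each other. Concretely, if $(I_\nu, \tfrac12 \dim V(\tR)^0_{-\nu}, \dim_\H V(\H)^0_{-\nu} + \tfrac12 n(Y,g))$ represents $\S(Y)$ and $(I'_\nu, \dots)$ represents $\S(-Y)$, then after choosing compatible metrics and cut-offs the Conley index $I'_\nu$ for the reversed flow is the equivariant Spanier--Whitehead dual of $I_\nu$ with respect to the representation $V^0_{-\nu}$, which decomposes as $\tC^s \oplus \H^l$ for suitable $s, l \geq 0$ (after suspending by a copy of $\tR$ if needed to make the levels even). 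Here $l = \dim_\H V(\H)^0_{-\nu}$ and $s = \tfrac12 \dim V(\tR)^0_{-\nu}$ are precisely the numbers of $\H$- and $\tC$-desuspensions appearing in the triples for $\S(Y)$ and $\S(-Y)$ combined, at least up to the $\H$-desuspension by $n(Y,g)/2$ that gets split between the two sides.

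The key computation is then bookkeeping. Lemma~\ref{lem:dual} gives $k(I_\nu) + k(I'_\nu) \geq l = \dim_\H V(\H)^0_{-\nu}$. By the definition of $k(X,m,n) = k(X) - n$ and of $\kappa$ in \eqref{eq:kappa}, we have
$$\tfrac12 \kappa(Y) = k(\S(Y)) = k(I_\nu) - \dim_\H V(\H)^0_{-\nu} - \tfrac12 n(Y,g)$$
and similarly for $-Y$, with $\dim_\H V(\H)^0_{-\nu}$ computed for the reversed flow and with $n(-Y,g) = -n(Y,g)$ (the eta-invariant quantity changes sign under orientation reversal). Since the negative eigenspace of $\ell$ on $Y$ for the spinor part is the orthogonal complement in $V(\H)^0_{-\nu} \oplus \dots$ of the corresponding space for $-Y$, the two quaternionic dimensions add up to $l$. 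Adding the two displayed equations and applying Lemma~\ref{lem:dual} makes the $l$ and the $\pm \tfrac12 n(Y,g)$ terms cancel, leaving $\tfrac12(\kappa(Y) + \kappa(-Y)) \geq 0$.

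The main obstacle will be pinning down the duality statement with enough precision: one must verify that the finite-dimensional Conley indices $I_\nu(Y)$ and $I_\nu(-Y)$ really are equivariantly $V^0_{-\nu}$-dual for a suitable common choice of metric $g$ and cut-off $\nu$, with the correct values of $s$ and $l$. This is essentially the content of the duality theorem for Seiberg--Witten Floer spectra (cf. \cite[Section 2]{swfh} and the corresponding discussion in \cite{Spectrum}), applied here at the space level rather than after desuspension. I would cite that result and then carry out the index arithmetic above; once the duality is granted, the inequality is a direct consequence of Lemma~\ref{lem:dual} together with the bookkeeping of the $\tC$- and $\H$-desuspensions, and the fact that $n(-Y, g) = -n(Y,g)$. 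A secondary point to check is the parity/level issue: if $I_\nu$ is at an odd level one replaces it by $\Sigma^{\tR} I_\nu$, and one must make sure the dual side is adjusted compatibly so that Lemma~\ref{lem:dual} still applies with $V$ a sum of copies of $\tC$ and $\H$.
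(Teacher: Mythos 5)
Your approach is exactly the paper's: invoke the equivariant Spanier--Whitehead duality between the Conley indices for $Y$ and $-Y$, apply Lemma~\ref{lem:dual}, and bookkeep the suspension indices. One slip to correct: the Conley indices $I_\nu$ (for $Y$) and $\bar I_\nu$ (for $-Y$) are equivariantly $V^\nu_{-\nu}$-dual, not $V^0_{-\nu}$-dual --- the duality is taken with respect to the whole finite-dimensional cut-off space on which the approximate flow lives, not merely its non-positive eigenspace. Accordingly $l$ should be $\dim_\H V^\nu_{-\nu}(\H)$; this is what makes the arithmetic close, since the $\H$-desuspension indices $\dim_\H V^0_{-\nu}(\H)$ for $Y$ and the analogous quantity for $-Y$ (which is the dimension of the positive eigenspace of $\ell$) add up to this $l$, while the $\pm\tfrac{1}{2} n(Y,g)$ terms cancel. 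Your own sentence ``the two quaternionic dimensions add up to $l$'' is only consistent with the corrected value of $l$, so once you replace $V^0_{-\nu}$ by $V^\nu_{-\nu}$ throughout, the bookkeeping goes through and recovers the paper's two-line proof, which cites the duality from \cite[proof of Proposition 3.9]{swfh} and then applies Lemma~\ref{lem:dual}.
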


\begin{proof}
It is shown in \cite[proof of Proposition 3.9]{swfh} that the Conley indices $I_{\nu}$ (for $Y$) and $\bar I_{\nu}$ (for $-Y$) are equivariantly $(V^{\nu}_{-\nu})$-dual to each other. The result now follows from  Lemma~\ref{lem:dual}.
\end{proof}

\section{Calculations}
\label{sec:calc}

In this section we prove Theorem~\ref{thm:brieskorn} from the introduction, about the values of $\kappa$ for $S^3$ and for the Brieskorn spheres $\pm \Sigma(2,3,m)$ with $\gcd(m, 6)=1$. We obtain some concrete bounds on the intersection forms of spin four-manifolds with boundary, and compare them to the bounds that can be obtained by simpler methods.

\subsection{Positive scalar curvature}
\label{sec:psc}
If $Y$ is a rational homology sphere admitting a metric $g$ of positive scalar curvature, by the arguments in \cite[Section 10]{Spectrum} or \cite[Section7.1]{GluingBF}, we obtain
$$\S(Y, \s) = [(S^0, 0, n(Y, \s, g)/2)]$$
and therefore
$$ \kappa(Y, \s) = - n(Y, \s, g).$$
In particular, $\S(S^3)=[(S^0, 0, 0)]$ and $\kappa(S^3)=0$. This proves part (i) of Theorem~\ref{thm:brieskorn}.

\subsection{A family of Brieskorn spheres}
\label{sec:brieskorn}
We now move to parts (ii) and (iii) of Theorem~\ref{thm:brieskorn}. 

We use the arguments in \cite[Section 7.2]{GluingBF} and \cite[Section 3.8]{swfh} to compute explicitly the Seiberg-\!Witten Floer spectrum classes of $\pm \Sigma(2,3,m)$. The calculations are based on the description of the monopole solutions on $\Sigma(2,3,m)$, which was given by Mrowka, Ozsv\'ath and Yu in \cite{MOY}.

We start with the case $m=12n-1$. The Seiberg-\!Witten equations on $\Sigma(2,3,12n-1)$ have one reducible solution in degree zero, and $2n$ irreducibles in degree one. The irreducibles come in $n$ pairs related by the action of the element $j \in G$. Thus, a representative for $\S(\Sigma(2,3,12n-1))$ can be constructed by attaching $n$ free cells of the form $\Sigma G_+$ to a trivial cell $S^0$. The attaching map for each cell is determined by a stable homotopy class in $\{G_+, S^0\}_G \cong \{S^0, S^0\} \cong \Z$. Together the attaching maps given an element in $\Z^n$, and the spectrum class is  determined by the divisibility of this element. The fact that it is primitive can be deduced from the calculation of the $S^1$-equivariant homology of $\S(\Sigma(2,3,12n-1))$, given in \cite[Section 7.2]{GluingBF}. (In fact, it even suffices to know the non-equivariant homology.) We obtain: 
$$ \S(\Sigma(2,3,12n-1)) = [(\tilde G \vee \underbrace{ \Sigma G_+ \vee \dots \vee \Sigma G_+}_{n-1}, 0, 0)],$$
where $\tG$ is the unreduced suspension of $G$, considered in Example~\ref{ex:G}. We computed that $k(\tG) = 1$, and we know from \eqref{eq:kwedge} that $k$ is unchanged by wedging with a free space. Therefore, we have $\kappa(\Sigma(2,3,12n-1)) = 2.$ 

The spectrum class of $-\Sigma(2,3,12n-1)$ is dual to that of $\Sigma(2,3,12n-1)$; compare the proof of Proposition~\ref{prop:duals}. We know from \cite[Example 2.14]{swfh} that $\tG$ is $\H$-dual to the space $\tilde T$ from Example~\ref{ex:torus}. Furthermore, $G_+$ is stably $(\R^{\dim G})$-dual to itself by the Wirthm\"uller isomorphism.\footnote{The Wirthm\"uller isomorphism \cite{Wirthmuller2, LMS} is usually formulated in equivariant stable homotopy theory built on a complete universe; that is, by allowing suspensions by arbitrary representations of $G$. In our setting, we only use the representations $\R, \tR$ and $\H$. Nevertheless, what is essential is that we can embed $G$ in one of these representations, in our case $\H$. The Thom space of its normal bundle is then $\Sigma^{3\R} G_+$, which shows that $G_+$ and $\Sigma^{3\R} G_+$ are $\H$-dual. After suspending by $\R$, we get that $G_+$ and $\Sigma^{4\R} G_+ \cong \Sigma^{\H} G_+$ are $(\R \oplus \H)$-dual, which shows the Wirthm\"uller isomorphism explicitly.} Since $\Sigma^{\H} G_+ \simeq \Sigma^4 G_+$, we can write the dual of $\Sigma{G_+}$ as the formal de-suspension of $\Sigma^{2}G_+$ by $\H$. We deduce that:
$$ \S(-\Sigma(2,3,12n-1)) = [(\tilde T \vee \underbrace{\Sigma^2 G_+ \vee \dots \vee \Sigma^2 G_+}_{n-1}, 0, 1)].$$
In Example~\ref{ex:torus} we computed $k(\tilde T) =1$, so we find that $\kappa(-\Sigma(2,3,12n-1))= 2 \cdot (1-1) = 0.$

The case of $\Sigma(2,3,12n-5)$ is similar to $\Sigma(2,3,12n-1)$, except now the reducible is in degree $-2$ and the irreducibles in degree $-1$. Thus, $\S(\Sigma(2,3,12n-5))$ is a formal de-suspension of $\S(\Sigma(2,3,12n-1))$ by $1/2$ copies of the representation $\H$. Therefore,
$$ \S(\Sigma(2,3,12n-5)) = [(\tilde G \vee \underbrace{\Sigma G_+ \vee \dots \vee \Sigma G_+,}_{n-1} 0, 1/2)].$$
The spectrum class for $-\Sigma(2,3,12n-5)$ is the dual of $\S(\Sigma(2,3,12n-1))$, and the formal suspension of $\S(-\Sigma(2,3,12n-1))$ by $1/2$ copies of $\H$. Thus,
$$ \S(-\Sigma(2,3,12n-5)) = [(\tilde T \vee \underbrace{\Sigma^2 G_+ \vee \dots \vee \Sigma^2 G_+}_{n-1}, 0, 1/2)].$$
From here we deduce that $\kappa(\Sigma(2,3,12n-5)) = \kappa(-\Sigma(2,3,12n-5)) = 1.$

Next, consider the Seiberg-\!Witten flow for $\Sigma(2,3,12n+1)$. This has one reducible in degree $0$ and $2n$ irreducibles in degree $-1$, coming in $k$ pairs related by the action of $j$. The attaching maps have to be trivial for homotopical reasons. We get:
\begin{equation}
\label{eq:s13}
 \S(\Sigma(2,3,12n+1)) = [(S^0 \vee \underbrace{\Sigma^{-1} G_+ \vee \dots \vee \Sigma^{-1} G_+}_n, 0, 0)].
 \end{equation}
Strictly speaking, by this we mean the spectrum class of 
$$(\H^+ \vee \underbrace{\Sigma^{3} G_+ \vee \dots \vee \Sigma^{3} G_+}_n, 0, 1)],$$
but we write it as in \eqref{eq:s13} for simplicity. Its dual is:
$$ \S(-\Sigma(2,3,12n+1)) = [(S^0 \vee \underbrace{G_+ \vee \dots \vee G_+}_n, 0, 0)].$$
We obtain $\kappa(\Sigma(2,3,12n+1)) = \kappa(-\Sigma(2,3,12n+1)) =0.$

Finally, the Seiberg-\!Witten flow for $\Sigma(2,3,12n+5)$ is analogous to that for $\Sigma(2,3,12n+1)$, except for an upward shift in dimension by $2$. Therefore, 
$$ \S(\Sigma(2,3,12n+5)) = [(S^0 \vee \underbrace{\Sigma^{-1} G_+ \vee \dots \vee \Sigma^{-1} G_+}_n, 0, -1/2)],$$
with dual
$$ \S(-\Sigma(2,3,12n+5)) = [(S^0 \vee \underbrace{G_+ \vee \dots \vee G_+}_n, 0, 1/2)].$$
We deduce that $\kappa(\Sigma(2,3,12n+5)) = 1$ and $\kappa(-\Sigma(2,3,12n+5)) =-1.$ This completes the proof of Theorem~\ref{thm:brieskorn}.

\subsection{Explicit bounds}
\label{sec:bounds}
For an integral homology sphere $Y$, define:
$$ \xi(Y) = \max \{p-q \mid p, q \in \Z, q > 1, \exists \ X^4 \text{ spin}, \del X = Y, \ Q(X) \equiv p (-E_8) \oplus q\hyp \},$$
where $Q(X)$ denotes the intersection form of $X$. 

The simplest way of obtaining an upper bound on $\xi(Y)$ is to find a compact spin $4$-manifold $X'$ with $\del X' = -Y$, and then apply Furuta's 10/8 theorem to $X \cup_Y X'$. If $X'$ has intersection form $p' (-E_8) \oplus q'\hyp$, from \eqref{eq:10_8} we get:
\begin{equation}
\label{eq:direct}
\xi(Y) \leq q'-p'-1.
\end{equation}
In particular, for $Y=S^3$, by taking $X'$ to be a four-ball we get $\xi(S^3) \leq -1$. Since the $K3$ surface has intersection form $2(-E_8) \oplus 3 \hyp$, we see that $$\xi(S^3)=-1.$$

A more refined way of getting upper bounds on $\xi(Y)$ is to find a compact, spin $4$-dimensional {\it orbifold}\footnote{Orbifolds were first introduced by Satake \cite{Satake} under the name of V-manifolds. The term V-manifold is used in some of the literature, e.g., in \cite{FukumotoFuruta} and \cite{SavelievFF}.} $X'$ with $\del X' = -Y$. Let $\t$ denote the spin structure on $X'$. Let also $X$ be a spin manifold with boundary $Y$ and intersection form $p(-E_8) \oplus q \hyp$, such that $q > 0$, as in the definition of $\xi$. Fukumoto and Furuta \cite{FukumotoFuruta} proved an analogue of the 10/8-theorem for closed, spin orbifolds. Applying it to $X \cup_Y X'$, it reads
\begin{equation}
\label{eq:ff}
b_2^+(X \cup_Y X') \geq 1 + \ind_{\C} \Dirac(X \cup_Y X').
\end{equation}
In \cite{FukumotoFuruta}, this is stated under the assumption $\ind_{\C} \Dirac(X \cup_Y X') > 0$. However, since $b_2^+(X \cup_Y X') = q + b_2^+(X') \geq q \geq 1,$ the inequality \eqref{eq:ff} remains true if $\ind_{\C} \Dirac(X \cup_Y X') \leq 0$.

Fukumoto and Furuta defined an invariant
$$ w(-Y, X', \t) =  \ind_{\C} \Dirac(X \cup_Y X') + \frac{1}{8}\sigma(X).$$
This turns out to be independent of $X$. When $X'$ is a plumbed spin orbifold, Saveliev \cite{SavelievFF} proved that $w(-Y, X', \t)$ coincides with the Neumann-Siebenmann invariant $-\bar \mu(-Y) = \bar \mu(Y)$ from \cite{NeumannPlumbed, SiebenmannPlumbed}. Thus, in this case, from \eqref{eq:ff} we obtain:
$$ q + b_2^+(X') \geq 1 + \bar \mu(Y) + p.$$
In particular, if $Y$ is a Seifert fibered homology sphere $\Sigma(a_1, \dots, a_k)$ with at least one of the $a_i$ even, we can take $X'$ to be the orbifold $D^2$-bundle over $S^2(a_1, \dots, a_k)$ associated to the Seifert fibration; we choose the orientation of $X'$ so that $\del X' = -Y$. Then $X'$ has a unique spin structure $\t$, and we have $b_2^+(X') = 1, b_2^-(X') = 0$; compare \cite{FukumotoFurutaUe, FukumotoBG}. We get the bound:
\begin{equation}
\label{eq:xiS}
\xi(\Sigma(a_1, \dots, a_k)) \leq - \bar \mu(\Sigma(a_1, \dots, a_k)).
\end{equation}
Applying the same reasoning to $-Y$ and $-X'$ instead of $Y$ and $X'$, since $b_2^+(-X')=0$, we get the bound:
\begin{equation}
\label{eq:xiSm}
\xi(-\Sigma(a_1, \dots, a_k)) \leq \bar \mu(\Sigma(a_1, \dots, a_k)) - 1.
\end{equation}

The $\bar \mu$ invariant for $\Sigma(a_1, \dots, a_k)$ can be computed explicitly; see \cite{NeumannPlumbed, NeumannRaymond}. In particular, for the Brieskorn spheres $\pm \Sigma(2,3,m)$ with $\gcd(m,6)=0$, from \eqref{eq:xiS} and \eqref{eq:xiSm} we get the concrete inequalities:
\begin{align}
\label{eq:Bound11}
\xi(\Sigma(2,3,12n-1)) &\leq 0, \hskip1cm \xi(-\Sigma(2,3,12n-1)) \leq -1, \\
\label{eq:Bound7}
\xi(\Sigma(2,3,12n-5)) &\leq -1, \hskip.72cm \xi(-\Sigma(2,3,12n-5)) \leq 0, \\
\label{eq:Bound13}
\xi(\Sigma(2,3,12n+1)) &\leq 0, \hskip1cm \xi(-\Sigma(2,3,12n+1)) \leq -1,\\
\label{eq:Bound5}
\xi(\Sigma(2,3,12n+5)) &\leq 1, \hskip1cm \xi(-\Sigma(2,3,12n+5)) \leq -2.
\end{align}

This paper provides a new method for obtaining bounds on $\xi$. Indeed, by Corollary~\ref{cor:1bdry}, we have:
\begin{equation}
\label{eq:xikappa}
\xi(Y) \leq \kappa(Y)-1.
\end{equation}
Given the values of $\kappa$ for $\pm \Sigma(2,3,m)$ computed in Theorem~\ref{thm:brieskorn}, we find that \eqref{eq:xikappa} gives the following bounds:
\begin{align}
\label{eq:bound11}
\xi(\Sigma(2,3,12n-1)) &\leq 1, \hskip1cm \xi(-\Sigma(2,3,12n-1)) \leq -1, \\
\label{eq:bound7}
\xi(\Sigma(2,3,12n-5)) &\leq 0, \hskip1cm \xi(-\Sigma(2,3,12n-5)) \leq 0, \\
\label{eq:bound13}
\xi(\Sigma(2,3,12n+1)) &\leq -1, \hskip.72cm \xi(-\Sigma(2,3,12n+1)) \leq -1,\\
\label{eq:bound5}
\xi(\Sigma(2,3,12n+5)) &\leq 0, \hskip1cm \xi(-\Sigma(2,3,12n+5)) \leq -2.
\end{align}

Comparing these with \eqref{eq:Bound11}-\eqref{eq:Bound5}, we see that $\kappa$ gives better bounds in two of the eight cases: namely, for $\xi(\Sigma(2,3,12n+1))$ and $\xi(\Sigma(2,3,12n+5))$. 

Let us see to what extent the information we get from \eqref{eq:direct},  \eqref{eq:Bound11}-\eqref{eq:Bound5} and \eqref{eq:bound11}-\eqref{eq:bound5} allows us to calculate $\xi(\pm \Sigma(2,3,m)).$ We do a case-by-case analysis.

\medskip
\noindent $\mathbf{Y=\pm\Sigma(2,3,12n-1)}.$ The Brieskorn sphere $-\Sigma(2,3,12n-1)$ is the boundary of the nucleus $N(2n)$ inside the elliptic surface $E(2n)$. The nucleus can be represented by the Kirby diagram
\begin{center}
 \begin{picture}(0,0)%
\includegraphics{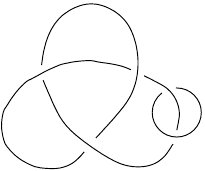}%
\end{picture}%
\setlength{\unitlength}{2368sp}%
\begingroup\makeatletter\ifx\SetFigFont\undefined%
\gdef\SetFigFont#1#2#3#4#5{%
  \reset@font\fontsize{#1}{#2pt}%
  \fontfamily{#3}\fontseries{#4}\fontshape{#5}%
  \selectfont}%
\fi\endgroup%
\begin{picture}(1625,1342)(2715,-2483)
\put(2971,-1325){\makebox(0,0)[lb]{\smash{{\SetFigFont{8}{9.6}{\rmdefault}{\mddefault}{\updefault}{\color[rgb]{0,0,0}$0$}%
}}}}
\put(4325,-1951){\makebox(0,0)[lb]{\smash{{\SetFigFont{8}{9.6}{\rmdefault}{\mddefault}{\updefault}{\color[rgb]{0,0,0}$-2n$}%
}}}}
\end{picture}%

 \end{center}
The intersection form of $N(2n)$ is equivalent to $\hyp$. By reversing the orientation of $N(n)$, we obtain a manifold with boundary $\Sigma(2,3,12n-1)$ and intersection form $\hyp$. From the definition of $\xi$, we get:
$$ -1 \leq \xi(\pm \Sigma(2,3,12n-1)).$$
In conjunction with \eqref{eq:Bound11}, we obtain:
$$ \xi(\Sigma(2,3,12n-1)) \in \{0, -1\}, \ \ \ \  \xi(-\Sigma(2,3,12n-1)) = -1.$$
We do not know the value of $\xi(\Sigma(2,3,12n-1))$ in general. However, for $n=1$, the complement of $N(2n)$ in the $K3$ surface has intersection form $2(-E_8) \oplus 2\hyp$. Therefore,
$$ \xi(\Sigma(2,3,11)) = 0.$$

\medskip
\noindent $\mathbf{Y=\pm\Sigma(2,3,12n-5)}.$ The manifold $-\Sigma(2,3,12n-5)$ is the boundary of the following plumbing of spheres:
\begin{center}
\begin{picture}(0,0)%
\includegraphics{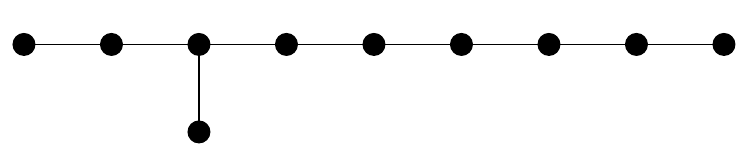}%
\end{picture}%
\setlength{\unitlength}{2763sp}%
\begingroup\makeatletter\ifx\SetFigFont\undefined%
\gdef\SetFigFont#1#2#3#4#5{%
  \reset@font\fontsize{#1}{#2pt}%
  \fontfamily{#3}\fontseries{#4}\fontshape{#5}%
  \selectfont}%
\fi\endgroup%
\begin{picture}(5048,1111)(-164,-1475)
\put(3451,-511){\makebox(0,0)[lb]{\smash{{\SetFigFont{8}{9.6}{\rmdefault}{\mddefault}{\updefault}{\color[rgb]{0,0,0}$-2$}%
}}}}
\put(4051,-511){\makebox(0,0)[lb]{\smash{{\SetFigFont{8}{9.6}{\rmdefault}{\mddefault}{\updefault}{\color[rgb]{0,0,0}$-2$}%
}}}}
\put(4651,-511){\makebox(0,0)[lb]{\smash{{\SetFigFont{8}{9.6}{\rmdefault}{\mddefault}{\updefault}{\color[rgb]{0,0,0}$-2n$}%
}}}}
\put(1351,-1411){\makebox(0,0)[lb]{\smash{{\SetFigFont{8}{9.6}{\rmdefault}{\mddefault}{\updefault}{\color[rgb]{0,0,0}$-2$}%
}}}}
\put(-149,-511){\makebox(0,0)[lb]{\smash{{\SetFigFont{8}{9.6}{\rmdefault}{\mddefault}{\updefault}{\color[rgb]{0,0,0}$-2$}%
}}}}
\put(451,-511){\makebox(0,0)[lb]{\smash{{\SetFigFont{8}{9.6}{\rmdefault}{\mddefault}{\updefault}{\color[rgb]{0,0,0}$-2$}%
}}}}
\put(1051,-511){\makebox(0,0)[lb]{\smash{{\SetFigFont{8}{9.6}{\rmdefault}{\mddefault}{\updefault}{\color[rgb]{0,0,0}$-2$}%
}}}}
\put(1651,-511){\makebox(0,0)[lb]{\smash{{\SetFigFont{8}{9.6}{\rmdefault}{\mddefault}{\updefault}{\color[rgb]{0,0,0}$-2$}%
}}}}
\put(2251,-511){\makebox(0,0)[lb]{\smash{{\SetFigFont{8}{9.6}{\rmdefault}{\mddefault}{\updefault}{\color[rgb]{0,0,0}$-2$}%
}}}}
\put(2851,-511){\makebox(0,0)[lb]{\smash{{\SetFigFont{8}{9.6}{\rmdefault}{\mddefault}{\updefault}{\color[rgb]{0,0,0}$-2$}%
}}}}
\end{picture}%

\end{center}
This plumbing has intersection form $(-E_8) \oplus \hyp$. If we reverse its orientation, we obtain a manifold with intersection form $E_8 \oplus \hyp$ and boundary $\Sigma(2,3,12n-5)$. We deduce that:
$$ -2 \leq \xi(\Sigma(2,3,12n-5)), \ \ \ \ 0 \leq \xi(-\Sigma(2,3,12n-5)).$$
In view of \eqref{eq:Bound7}, we get:
$$ \xi(\Sigma(2,3,12n-5)) \in \{-2, -1\}, \ \ \ \ \xi(-\Sigma(2,3,12n-5)) = 0.$$
For $n=1$, observe that the complement of the $(-E_{10})$-plumbing inside the $K3$ surface has intersection form $(-E_8) \oplus 2\hyp$. Therefore,
$$  \xi(\Sigma(2,3,7)) = -1.$$

\medskip
\noindent $\mathbf{Y=\pm\Sigma(2,3,12n+1)}.$ The Brieskorn sphere $-\Sigma(2,3,12n+1)$ is the boundary of the manifold
\begin{center}
 \begin{picture}(0,0)%
\includegraphics{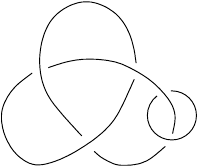}%
\end{picture}%
\setlength{\unitlength}{2368sp}%
\begingroup\makeatletter\ifx\SetFigFont\undefined%
\gdef\SetFigFont#1#2#3#4#5{%
  \reset@font\fontsize{#1}{#2pt}%
  \fontfamily{#3}\fontseries{#4}\fontshape{#5}%
  \selectfont}%
\fi\endgroup%
\begin{picture}(1580,1332)(1354,-2473)
\put(2905,-1907){\makebox(0,0)[lb]{\smash{{\SetFigFont{8}{9.6}{\rmdefault}{\mddefault}{\updefault}{\color[rgb]{0,0,0}$-2n$}%
}}}}
\put(1695,-1342){\makebox(0,0)[rb]{\smash{{\SetFigFont{8}{9.6}{\rmdefault}{\mddefault}{\updefault}{\color[rgb]{0,0,0}$0$}%
}}}}
\end{picture}%

 \end{center}
with intersection form $\hyp$. Therefore, we have:
$$ -1 \leq \xi(\pm \Sigma(2,3,12n+1)).$$
Moreover, when $n=1$ or $2$, the manifolds $\Sigma(2,3,13)$ and $\Sigma(2,3,25)$ bound homology balls, so, by applying \eqref{eq:direct}, we get:
$$ \xi(\pm \Sigma(2,3,13)) = \xi(\pm \Sigma(2,3,25)) =-1.$$
The inequalities in \eqref{eq:bound13} now give the answers for all $n$:
$$ \xi(\pm \Sigma(2,3,12n+1)) = -1.$$
Note that the result for $+\Sigma(2,3,12n+1)$ was not accessible from \eqref{eq:Bound13}. This provides a first example where $\kappa$ gives a better bound than the one from the filling method.

\medskip
\noindent $\mathbf{Y=\pm\Sigma(2,3,12n+5)}.$
The manifold $\Sigma(2,3,12n+5))$ is the boundary of the plumbing
\begin{center}
\begin{picture}(0,0)%
\includegraphics{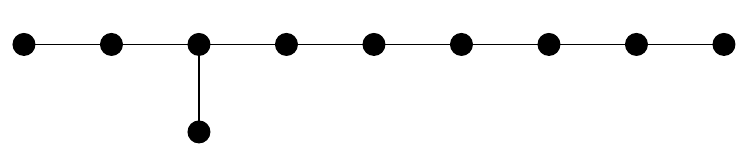}%
\end{picture}%
\setlength{\unitlength}{2763sp}%
\begingroup\makeatletter\ifx\SetFigFont\undefined%
\gdef\SetFigFont#1#2#3#4#5{%
  \reset@font\fontsize{#1}{#2pt}%
  \fontfamily{#3}\fontseries{#4}\fontshape{#5}%
  \selectfont}%
\fi\endgroup%
\begin{picture}(5048,1111)(-164,-1475)
\put(3451,-511){\makebox(0,0)[lb]{\smash{{\SetFigFont{8}{9.6}{\rmdefault}{\mddefault}{\updefault}{\color[rgb]{0,0,0}$-2$}%
}}}}
\put(4051,-511){\makebox(0,0)[lb]{\smash{{\SetFigFont{8}{9.6}{\rmdefault}{\mddefault}{\updefault}{\color[rgb]{0,0,0}$-2$}%
}}}}
\put(4651,-511){\makebox(0,0)[lb]{\smash{{\SetFigFont{8}{9.6}{\rmdefault}{\mddefault}{\updefault}{\color[rgb]{0,0,0}$2n$}%
}}}}
\put(1351,-1411){\makebox(0,0)[lb]{\smash{{\SetFigFont{8}{9.6}{\rmdefault}{\mddefault}{\updefault}{\color[rgb]{0,0,0}$-2$}%
}}}}
\put(-149,-511){\makebox(0,0)[lb]{\smash{{\SetFigFont{8}{9.6}{\rmdefault}{\mddefault}{\updefault}{\color[rgb]{0,0,0}$-2$}%
}}}}
\put(451,-511){\makebox(0,0)[lb]{\smash{{\SetFigFont{8}{9.6}{\rmdefault}{\mddefault}{\updefault}{\color[rgb]{0,0,0}$-2$}%
}}}}
\put(1051,-511){\makebox(0,0)[lb]{\smash{{\SetFigFont{8}{9.6}{\rmdefault}{\mddefault}{\updefault}{\color[rgb]{0,0,0}$-2$}%
}}}}
\put(1651,-511){\makebox(0,0)[lb]{\smash{{\SetFigFont{8}{9.6}{\rmdefault}{\mddefault}{\updefault}{\color[rgb]{0,0,0}$-2$}%
}}}}
\put(2251,-511){\makebox(0,0)[lb]{\smash{{\SetFigFont{8}{9.6}{\rmdefault}{\mddefault}{\updefault}{\color[rgb]{0,0,0}$-2$}%
}}}}
\put(2851,-511){\makebox(0,0)[lb]{\smash{{\SetFigFont{8}{9.6}{\rmdefault}{\mddefault}{\updefault}{\color[rgb]{0,0,0}$-2$}%
}}}}
\end{picture}%

\end{center}
with intersection form $(-E_8) \oplus \hyp$. By analogy with the case $\pm \Sigma(2,3,12n-5)$, we obtain
$$ 0 \leq \xi(\Sigma(2,3,12n+5)), \ \ \ \ -2 \leq \xi(-\Sigma(2,3,12n+5)).$$
The right hand side of \eqref{eq:Bound5} shows that:
$$ \xi(-\Sigma(2,3,12n+5))= -2.$$
On the other hand, to obtain the answer for $\xi(\Sigma(2,3,12n+5))$, we need the new bound \eqref{eq:bound5}, which gives:
$$ \xi(\Sigma(2,3,12n+5)) = 0.$$
When $n=1$, this could have also been seen by applying the inequality \eqref{eq:direct} to the positive definite $E_8$ plumbing with boundary $-\Sigma(2,3,12n+5)$.

\begin{remark}
An invariant similar to $\xi$ was considered by Bohr and Lee in \cite{BohrLee}: 
$$ m(Y) = \max \{ \tfrac{5}{4} \sigma(X) - b_2(X)  \mid X^4 \text{ spin}, \del X = Y \}.$$
This invariant was used in \cite{BohrLee} to study $\Z/2$-homology cobordism. We have:
$$ m(-Y)/2 = \max \{p-q \mid  p, q \in \Z, \exists \ X^4 \text{ spin}, \del X = Y, \ Q(X) \equiv p (-E_8) \oplus q\hyp \}.$$
Note that, unlike in the definition of $\xi$, here we do not assume that $q > 0$. Nevertheless, by taking  a connected sum with $S^2 \times S^2$, we obtain the bound $m(-Y)/2 \leq \xi(Y) + 1 \leq \kappa(Y)$. 
\end{remark}

\section{Relation to homological invariants}
\label{sec:alphas}

In this section we explore the relationship between the invariant $\kappa$ (constructed using equivariant K-theory) and the invariant $\alpha$ constructed in \cite{swfh} using equivariant (Borel) cohomology with $\Z/2$ coefficients. In the process we define yet another invariant of homology spheres, $\alpha_{\Q}$; this is constructed using equivariant cohomology with $\Q$ coefficients.

\subsection{The Borel homology of spaces of type SWF}
Let $X$ be a space of type $\swf$ at an even level $2t$. In \cite[Section 2.3]{swfh}, we associated to $X$ three quantities $a(X), b(X), c(X) \in \Z$. This can be done by considering either the Borel homology or the Borel cohomology of $X$. Let us start by reviewing the definition using Borel homology. 

Let $\F = \Z/2$ be the field with two elements. The reduced Borel homology $\tH_*^G(X; \F)$ is a module over the ring 
$$H^*(BG; \F) = \F[q, v]/(q^3),$$
where $q$ is in degree $1$ and $v$ in degree $4$. (Hence, $q$ and $v$ act on homology by lowering degrees by $1$ and $4$, respectively.) Consider the long exact sequence
$$ \dots \to \tH_*^G(X^{S^1}; \F) \to \tH_*^G(X; \F) \to \tH_*^G(X/X^{S^1}; \F) \to \cdots$$
Since the quotient $X/X^{S^1}$ has free $G$-action away from the basepoint, its homology is finite dimensional over $\F$. Therefore, in large enough degrees, the Borel homology  $\tH_*^G(X; \F)$ looks like that of the fixed point set $X^{S^1} \sim (\tC^t)^+$, which in turn is just isomorphic to $H_*(BG; \F).$ We find that  $\tH_*^G(X; \F)$ has an infinite ``tail'' of the form
\[ \xymatrixcolsep{.7pc}
\xymatrix{
\dots & \F  &  \F \ar@/_1pc/[l]_{q} &  \F \ar@/_1pc/[l]_{q} & 0 & \F \ar@/^1pc/[llll]^{v} & \F \ar@/_1pc/[l]_{q} \ar@/^1pc/[llll]^{v} & \F \ar@/_1pc/[l]_{q} \ar@/^1pc/[llll]^{v} & 0 & \dots  \ar@/^1pc/[llll]^{v} & \dots \ar@/^1pc/[llll]^{v} & \dots \ar@/^1pc/[llll]^{v}
} \]
Formally, the tail can be defined as the submodule
$$\iH_*^G(X; \F) := \bigcap_{l \geq 0} \im \bigl (v^l : \tH_{*+4l}^G(X; \F) \To \tH_*^G(X; \F) \bigr).$$
If we forget the action of $q$, the tail decomposes into three ``sub-tails,'' in degrees congruent to $2t, 2t+1$ and $2t+2$ mod $4$. We define $a(X), b(X), c(X)$ by asking that the minimal degrees of nonzero elements in each of the three sub-tails are $a(X), b(X)+1$ and $c(X)+2$, respectively. We will mostly be interested in the first quantity,
$$ a(X) = \min \{  r \equiv 2t \ (\mod 4) \mid \exists \ x,\ 0 \neq x \in  {\iH}_r^G(X; \F)  \}.$$

Let us now consider some variations of this, using Borel homology with coefficients in $\Z$ or $\Q$ rather than $\F$. Since $BG$ is an $\rp^2$-bundle over $\hp^{\infty}$ (see \cite[Section 2.1]{swfh}), we have
$$ H^*(BG; \Z) = \Z[s, v]/(s^2, 2s),$$
with $s$ in degree $2$ and $v$ in degree $4$. In large enough degrees $\tH_*^G(X; \Z)$ looks like the homology $H_*(BG; \Z)$, that is,
\begin{equation}
\label{eq:Ztail} \xymatrixcolsep{.7pc}
\xymatrix{
\dots & \Z  &  \Z/2  &  0 & 0 & \Z \ar@/^1pc/[llll]^{v} & \Z/2  \ar@/^1pc/[llll]^{v} & 0   & 0 & \dots  \ar@/^1pc/[llll]^{v} & \dots \ar@/^1pc/[llll]^{v} 
} \end{equation}
with $v$ being an isomorphism between the corresponding groups.

If we use $\Q$ coefficients, then $H^*(BG; \Q) = \Q[v]$ and   $\tH_*^G(X; \Q)$ has an infinite tail of the form
\[ \xymatrixcolsep{.7pc}
\xymatrix{
\dots & \Q  &  0  &  0 & 0 & \Q \ar@/^1pc/[llll]^{v} & 0  & 0   & 0 & \dots  \ar@/^1pc/[llll]^{v}  
} \]
For any Abelian group $A$ (in particular, for $A= \Z$ or $\Q$), we define
$$\iH_*^G(X; A) := \bigcap_{l \geq 0} \im \bigl (v^l : \tH_{*+4l}^G(X; A) \To \tH_*^G(X; A) \bigr).$$

Note that $\iH_*^G(X; \Q)$ is supported in degrees congruent to $2t$ mod $4$. We define an analogue of $a(X)$ using $\Q$ coefficients:
$$ a_{\Q}(X) = \min \{  r  \mid \exists \ x,\ 0 \neq x \in  {\iH}_r^G(X; \Q)  \}.$$

The relationship between $a$ and $a_{\Q}$ can be found via Borel homology over $\Z$, using the universal coefficients theorem. In simple cases we expect that $a = a_{\Q}$, but not so in general. If we inspect the sub-tail consisting in copies of $\Z$ in \eqref{eq:Ztail}, we observe two possible things that can ``go wrong'' towards the end of the sub-tail:

 First, the tail may end not in $\Z$ but in a torsion group. For example, the last $v$ map in the tail may be a projection $\Z \to \Z/2$. If so, the copy of $\Z/2$ survives in Borel homology with $\F$ coefficients, but not in Borel homology with $\Q$ coefficients, and we get $a(X) < a_{\Q}(X)$.

\begin{example}
\label{ex:kh2}
Consider the quaternionic Hopf fibration $S(\H) \hookrightarrow S(\H^2) \to \hp^1$. Pull back this $S(\H)$-bundle under a degree $2$ map from $\hp^1 \cong S^4$ to itself, and let $Z$ be the total space of the resulting bundle. The group $G \subset S(\H)$ acts freely on $Z$, and the quotient $Q = Z/G$ is an $\rp^2$-bundle on $S^4$. The classifying map $Q \to BG$ induces a map on homology, and, if we identify both $H_4(Q; \Z)$ and $H_4(BG; \Z)$ with $\Z$, this map in degree $4$ is given by multiplication by $\pm 2$. The unreduced suspension $\tZ$ of $Z$ is a space of type $\swf$ at level zero. There is a long exact sequence (compare \cite[Section 2.4]{swfh}):
$$ \dots \to H_*(Q; \Z) \to H_*(BG; \Z) \to \tH_*^G(\tZ; \Z) \to \cdots ,$$
from which we deduce that the sub-tail of $\iH^*_G(\tZ; \Z)$ in degrees divisible by $4$ ends with a $\Z/2$ in degree $4$. Consequently, we have $a(\tZ)=4$ but $a_{\Q}(\tZ)=8$.
\end{example}

Another thing that can happen at the end of the tail of $\Z$'s in \eqref{eq:Ztail} is that the last nonzero $v$ map ends in a $\Z$ summand of $\tH^*(X; \Z)$, but this last map has nontrivial cokernel. For example, suppose that the tail ends in a copy of $\Z$ in degree $r \equiv 2t \pmod 4$, with the last $v$ map having cokernel $\Z/2$. Then, the tail of Borel homology with $\Q$ coefficients ends in degree $r$ as well, but the tail with $\F$ coefficients ends in a higher degree $a(X) > a_{\Q}(X)=r$. This situation appears, for instance, for a space that is  equivariantly $\H^m$-dual (for some $m$) to the space $\tZ$ from Example~\ref{ex:kh2}. 

Nevertheless, in many cases neither of the above two anomalies appear. We have:

\begin{proposition}
\label{prop:aaQ}
Suppose that $X$ is a space of type $\swf$ at an even level $2t$, such that, for any $r \equiv 2t \! \pmod 4$:
\begin{enumerate}[(i)]
\item The group $\iH_r^G(X;\Z)$ has no $2$-torsion elements, and
\item There are no elements $x \in H_r^G(X; \Z)$ such that $0 \neq 2x \in {\iH}_r^G(X;\Z)$ but $x \not \in {\iH}_r^G(X; \Z)$.
\end{enumerate}
Then, we have $a(X) = a_\Q(X)$.
\end{proposition}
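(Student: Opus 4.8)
The plan is to compare the $v$-towers of the Borel homologies of $X$ with coefficients in $\F$, $\Z$ and $\Q$, and to show that hypotheses (i) and (ii) are precisely what rules out the two failures of the integral tail discussed just above --- the tail ending in a torsion group, and the last nonzero $v$-map having nontrivial cokernel. Throughout, $v$ denotes the degree $-4$ operator, all homology is Borel homology of $X$, and ``$r\equiv 2t$'' abbreviates ``$r\equiv 2t\pmod 4$''.

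First I would record the structural facts. Since $X$ is a finite $G$-CW complex with $X^{S^1}\simeq(\tR^{2t})^+$, and $\tR^{2t}$ is $\Z$-orientable over $BG$ (because $w_1(\tR^{2t})=2t\,w_1(\tR)=0$), the long exact sequence of the pair $(X,X^{S^1})$, together with the fact that $\tH_*^G(X/X^{S^1};A)$ is concentrated in finitely many degrees ($G$ acting freely away from the basepoint), shows that $\tH_*^G(X;A)$ is finitely generated in each degree and agrees, in all sufficiently high degrees, with $H_{*-2t}(BG;A)$, with $v$ acting as an isomorphism. Hence for each $r$ the decreasing chain $\im\bigl(v^l\colon\tH_{r+4l}^G(X;A)\to\tH_r^G(X;A)\bigr)$ stabilizes, so $\iH_r^G(X;A)=\im\bigl(v^L\colon\tH_{r+4L}^G(X;A)\to\tH_r^G(X;A)\bigr)$ for $L\gg 0$. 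For $r\equiv 2t$ and $L$ large, $\tH_{r+4L}^G(X;\Z)\cong\Z$, so $\iH_r^G(X;\Z)$ is cyclic; and since $\Q$ is flat, $\iH_r^G(X;\Q)\cong\iH_r^G(X;\Z)\otimes\Q$. Consequently $a_\Q(X)=\min\{r\equiv 2t\ :\ \iH_r^G(X;\Z)\ \text{has positive rank}\}$; at $r=a_\Q(X)$ the cyclic group $\iH_r^G(X;\Z)$ has positive rank, hence is $\cong\Z$, while for $r\equiv 2t$ with $r<a_\Q(X)$ it is finite.

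The crux is the identity, valid for all $r\equiv 2t$ with no hypothesis on $X$,
$$\iH_r^G(X;\F)\ \cong\ \iH_r^G(X;\Z)\big/\bigl(\iH_r^G(X;\Z)\cap 2\,\tH_r^G(X;\Z)\bigr),$$
that is, $\iH_r^G(X;\F)$ is the image of $\iH_r^G(X;\Z)$ under reduction mod $2$. I would deduce this from the $v$-equivariant Bockstein long exact sequence attached to $0\to\Z\xrightarrow{2}\Z\to\F\to 0$, equivalently from the $v$-equivariant short exact sequence
$$0\to \tH_*^G(X;\Z)/2 \xrightarrow{\ \rho\ } \tH_*^G(X;\F) \xrightarrow{\ \beta\ } \bigl\{\text{$2$-torsion of }\tH_{*-1}^G(X;\Z)\bigr\}\to 0.$$
In degree $r+4L$ with $L$ large, the third term is the $2$-torsion of $\tH_{r+4L-1}^G(X;\Z)$, which vanishes because $r+4L-1\equiv 2t+3\pmod 4$ and $\tH_*^G(X;\Z)$ agrees there with $H_{*-2t}(BG;\Z)=0$; so $\rho$ is onto in degree $r+4L$. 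Applying $v^L$ and using $v^L\circ\rho=\rho\circ v^L$ then gives $\iH_r^G(X;\F)=\im(v^L_\F)=\rho\bigl(\im(v^L_\Z)\bigr)=\rho\bigl(\iH_r^G(X;\Z)\bigr)$, and the kernel of $\rho$ on $\tH_r^G(X;\Z)$ equals $2\,\tH_r^G(X;\Z)$ by exactness of the Bockstein sequence.

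It remains to feed in the hypotheses. If $r\equiv 2t$ and $r<a_\Q(X)$, then $\iH_r^G(X;\Z)$ is finite and, by (i), of odd order; by the identity, $\iH_r^G(X;\F)$ is a quotient of it which is also an $\F$-vector space, hence $0$. So $a(X)\ge a_\Q(X)$. At $r=a_\Q(X)$, where $\iH_r^G(X;\Z)\cong\Z$, hypothesis (ii) says exactly that $\iH_r^G(X;\Z)\cap 2\,\tH_r^G(X;\Z)=2\,\iH_r^G(X;\Z)$, so the identity gives $\iH_{a_\Q(X)}^G(X;\F)\cong\Z/2\ne 0$ and hence $a(X)\le a_\Q(X)$. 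Together these yield $a(X)=a_\Q(X)$. I expect the main obstacle to be the middle step: checking with care that $\rho$ and $\beta$ are $v$-equivariant, that the $2$-torsion term in the Bockstein sequence really vanishes in all sufficiently high degrees of the residue class $2t+3\pmod 4$ (so that $\rho$ is surjective there), and that forming the stabilized images $\im(v^L)$ interacts correctly with the exact sequence; the remaining steps are then formal.
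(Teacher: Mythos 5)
Your proof is correct and fills in the details of the paper's one-line justification, namely that the proposition follows from the universal coefficients theorem; the $v$-equivariant short exact sequence you use is exactly the universal-coefficients sequence for $\F=\Z/2$. Identifying $\iH_r^G(X;\F)$ as the mod-$2$ image of $\iH_r^G(X;\Z)$ and then feeding in hypotheses (i) and (ii) to show this image vanishes below $a_\Q(X)$ and is nontrivial at $a_\Q(X)$ is precisely the intended argument.
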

 
 \begin{proof}
 This is an application of the universal coefficients theorem.
 \end{proof}

Observe that the assumptions of Proposition~\ref{prop:aaQ} are satisfied for the spaces $\tG$ and $\tT$ considered in Example~\ref{ex:G} and Example~\ref{ex:torus}.

Let us now mention how the quantities $a$ and $a_{\Q}$ can be expressed in terms of Borel cohomology rather than Borel homology. When $A = \F$ or $\Q$, the Borel cohomology $\tH_G^*(X; A)$ has a tail similar to the one in Borel homology, except that the arrows increase degree. We get
$$ a(X) = \min \{  r \equiv 2t \ (\mod 4) \mid \exists \ x \in \tH^r_G(X; \F), \ v^l x \neq 0 \text{ for all } l \geq 0  \}$$
and
\begin{equation}
\label{eq:aq}
 a_{\Q}(X) = \min \{  r \mid \exists \ x \in \tH^r_G(X; \Q), \ v^l x \neq 0 \text{ for all } l \geq 0  \}.
 \end{equation}

\subsection{Equivariant K-theory and Borel cohomology}
\label{sec:kh}
Let us now explore the connection between $a_{\Q}(X)$ and the quantity $k(X)$ introduced in Definition~\ref{def:kx}. We will use the fact that, when we use $\Q$ coefficients, the Chern character gives an isomorphism between (non-equivariant) K-cohomology and the completion of ordinary cohomology.

Recall that $k(X)$ was defined in terms of the ideal $\i(X) \subset R(G)$, which is the image of the restriction map $\tK_*(X) \to \tK_*(X^{S^1}) \cong R(G)$. We also have an interpretation for $a_{\Q}(X)$ in terms of the ideal $\i(X)$:
\begin{proposition}
If $X$ is a space of type $\swf$ at an even level $2t$, then
$$ a_{\Q}(X) = 2t + 4\min \{k \geq 0 \mid   \exists \ \lambda \in \Z^*,  \mu \in \Z, \ \lambda z^k + \mu w \in \i(X) \}.$$
\end{proposition}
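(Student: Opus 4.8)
The plan is to relate both sides to the ideal $\i(X) \subseteq R(G)$ via the $\Q$-coefficient Chern character, using the long exact sequence \eqref{eq:pair} of the pair $(X, X^{S^1})$ in equivariant $K$-theory and in equivariant Borel cohomology simultaneously, and then matching up the respective ``tails.''

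First I would set up the algebra on the fixed-point side. Since $X$ is at even level $2t$, we have $\tK_G(X^{S^1}) \cong R(G)$ via the Bott class $b_{t\tC}$, and by definition the image of $\iota^*$ is $\i(X) \cdot b_{t\tC}$. On the Borel cohomology side, $\tH^*_G(X^{S^1}; \Q)$ is, after the degree shift by $2t$, a free rank-one module over $H^*(BG;\Q) = \Q[v]$; its ``tail'' (everything killed by no power of $v$) is all of it. The key computational input is that the $\Q$-coefficient Chern character identifies the non-equivariant $K$-theory of a finite complex with the completion of its rational cohomology, and equivariantly (via Fact~\ref{fact:complete} and the Atiyah–Segal picture) it identifies $R(G)^{\wedge}_{\a} \otimes \Q$ with $H^*(BG;\Q)^{\wedge}$. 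Concretely one computes the Chern character of the classes $z = \lambda_{-1}(\H)$ and $w = \lambda_{-1}(\tC)$ in $R(G)\otimes\Q$: the class $w$ is $2$-torsion-like ($w^2 = 2w$), so rationally $w/2$ is idempotent and maps under the localization $R(G)\otimes\Q \to R(S^1)^{\Z/2}\otimes\Q$ to zero; thus $w$ becomes a multiple of $v^0$ in degree $0$ of the non-nilpotent part — more precisely, $w$ corresponds to the copy of $\Q$ sitting in degree $2t$ coming from the $\R$-component, while $z$ maps to $2 - (\theta+\theta^{-1})$, whose Chern character begins in degree $4$ with a nonzero leading coefficient. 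Hence $\mathrm{ch}(z^k)$ has leading term a nonzero multiple of $v^k$, i.e. lives in cohomological degree $4k$ (plus the $2t$ shift), and $\mathrm{ch}(w)$ contributes in degree $2t$ only.

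Second I would run the two long exact sequences in parallel. Applying \eqref{eq:pair} to $A = X^{S^1} \subseteq X$ with $\Q$-coefficients, and using that $X/X^{S^1}$ has finite rational (co)homology, shows that in large degrees $\tH^*_G(X;\Q)$ agrees with $\tH^*_G(X^{S^1};\Q)$; the tail $\iH^*_G(X;\Q)$ is exactly the image of $\tH^*_G(X;\Q) \to \tH^*_G(X^{S^1};\Q) \cong \Q[v]\cdot(\text{shift }2t)$, since anything hit by $v^l$ for all $l$ must come from $X$ and anything coming from $X$ that survives in the quotient's cohomology (which is finite) gets killed by a power of $v$. By \eqref{eq:aq}, $a_\Q(X)$ is the lowest degree in this image. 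Now match: an element of $\i(X) \subseteq R(G)$ of the form $\lambda z^k + \mu w$ with $\lambda \neq 0$ has Chern character with leading term $\lambda' v^k$ (in degree $4k$, after the $2t$ shift) for some nonzero $\lambda'$, because the $z^k$ term dominates; conversely every element of $\i(X)$ that contributes something nonzero to the degree-$r$ part of the rational tail must, after subtracting a $w$-multiple, have a $z^k$-leading term with $4k + 2t = r$. This gives the equality of the two minima, i.e. $a_\Q(X) = 2t + 4\min\{k \geq 0 \mid \exists\, \lambda \in \Z^*,\, \mu \in \Z,\ \lambda z^k + \mu w \in \i(X)\}$.

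The main obstacle will be being careful about exactly which rational cohomology classes $z^k$ and $w$ map to under the Chern character, and why a general element of $\i(X)$ can be reduced (for the purpose of finding the lowest surviving tail degree) to one of the form $\lambda z^k + \mu w$ — this uses the ring structure $R(G) = \Z[w,z]/(w^2-2w, zw-2w)$, namely that modulo $w$ the ring is just $\Z[z]$, together with the fact that the rational tail only sees leading $v$-order, so within a fixed degree the relevant information is the lowest power of $z$ occurring. A secondary technical point is justifying the identification of $\iH^*_G(X;\Q)$ with $\operatorname{im}(\iota^*_{\Q})$ and checking that the $\Q$-Chern character is compatible with the restriction maps $X \to X^{S^1}$, so that $\operatorname{im}(\iota^*_{\Q})$ really is the image of $\i(X)\otimes\Q$ under $\mathrm{ch}$; this is where Facts~\ref{fact:complete} and~\ref{fact:free2} together with naturality of the Chern character do the work.
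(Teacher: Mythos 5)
Your overall strategy — run the long exact sequences of the pair $(X, X^{S^1})$ in rational Borel cohomology and in equivariant $K$-theory in parallel, identify their boundary maps via the rational Chern character and the Atiyah–Segal completion picture, and then read off the minimum surviving degree on each side — is exactly the route the paper takes (via the commutative diagram relating $R(G)\otimes\Q \to K(BG)\otimes\Q \to \tK^1(F)\otimes\Q$ to $\Q[v] \to \Q[[v]] \to \tH^{\mathrm{odd}}(F;\Q)$, with $F = (X/X^{S^1})/G$). But there is a genuine error in the step where you describe what happens to $w$: you assert that ``$w$ corresponds to the copy of $\Q$ sitting in degree $2t$'' and that ``$\mathrm{ch}(w)$ contributes in degree $2t$ only.'' That is not what happens, and if it were, your very next sentence would be false: if $\mathrm{ch}(w)$ were a nonzero constant in degree $2t$, then for $\mu \neq 0$ the leading term of $\mathrm{ch}(\lambda z^k + \mu w)$ would sit in degree $2t$, not $2t+4k$, so the $z^k$ term would \emph{not} dominate and the whole matching of minima would break (for instance, the formula would then always return $a_\Q(X)=2t$, which is wrong already for $X = (\H^l)^+$).

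The correct fact, which your argument needs and which the paper uses, is that $w$ is killed entirely under the rationalized Chern character. Concretely, $w = 1 - \tc$ with $\tC$ a complex line representation whose first Chern class lies in $H^2(BG;\Z) \cong \Z/2$, hence is rationally zero; therefore $\mathrm{ch}(\tc)=1$ and $\mathrm{ch}(w)=0$ in $H^*(BG;\Q)$. Equivalently, in the paper's formulation, $w$ maps to zero under the completion map $R(G)\otimes\Q \to R(G)^{\wedge}_{\a}\otimes\Q \cong \Q[[z]]$ because $w(w-2)=0$ and $w-2$ becomes a unit after completing at $\a=(w,z)$. Once you replace ``$\mathrm{ch}(w)$ contributes in degree $2t$ only'' by ``$\mathrm{ch}(w)=0$,'' the rest of your argument — that $\mathrm{ch}(\lambda z^k + \mu w)=\lambda\,\mathrm{ch}(z)^k$ has leading term a nonzero multiple of $v^k$, and that conversely a general element $\mu w + \sum_i a_i z^i \in \i(X)$ has leading Chern degree $4\min\{i : a_i\neq 0\}$, reduced modulo $w$ to the desired form using that $w$ itself always lies in $\i(X)\otimes\Q$ — goes through and matches the paper's proof.
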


\begin{proof}
Let $F$ be the pointed space $(X/X^{S^1})/G$. The inclusion of $X^{S^1}$ into $X$ gives rise to a long exact sequence:
\begin{equation}
\label{eq:giraffe}
\dots \to \tH^*(F; \Q) \to \tH^*_G(X; \Q) \to \tH^*_G(X^{S^1}; \Q) \xrightarrow{f} \tH^{*+1}(F; \Q) \to \cdots
\end{equation}
Let us identify $\tH^*_G(X^{S^1}; \Q)$ with $\tH^{*-2t}(BG; \Q)$ using the equivalence $X^{S^1} \sim  (\tC^t)^+$. By \eqref{eq:aq} and exactness, we can write
 \begin{align*}
  a_{\Q}(X) &= \min \{ r \mid \exists \ x, 0 \neq x \in \tH^r(BG; \Q), f(x) = 0 \} \\
  &= 2t + 4\min \{k \mid f(v^k) \neq 0\}.
  \end{align*}
Similarly, we have a long exact sequence in equivariant K-theory:
 \begin{equation}
 \label{eq:zebra}
 \dots \to \tK(F) \otimes \Q \to \tK_G(X) \otimes \Q \to \tK_G(X^{S^1}) \otimes \Q \xrightarrow{g} \tK^{1}(F) \otimes \Q \to \cdots
 \end{equation}
and we can identify $\tK^*_G(X^{S^1}) \otimes \Q$ with $R(G) \otimes \Q$ by the Bott isomorphism. 

The maps $f$ and $g$ from \eqref{eq:giraffe} and \eqref{eq:zebra} are the compositions of the maps in the bottom, resp. top row of the commutative diagram:
$$\begin{CD}
R(G) \otimes \Q @>>> K(BG) \otimes \Q @>>> \tK^{1}(F)\otimes \Q\\
@. @V{\operatorname{ch}}V{\cong}V @V{\operatorname{ch}}V{\cong}V \\
H^*(BG; \Q) @>>> H^*(BG; \Q)^{\wedge}_{v} @>>> \tH^{\text{odd}}(F; \Q). 
\end{CD}$$
Here, the first maps in each row are given by completion: for $R(G)$ with respect to the augmentation ideal $\a=(w, z)$, and for the cohomology $H^*(BG; \Q) = \Q[v]$ with respect to the ideal $(v)$. Note that $w \in R(G)$ gets sent to zero under completion over $\Q$, so $K(BG) \otimes Q \cong R(G)^{\wedge}_{\a} \otimes \Q$ is the power series ring $\Q[[z]]$. The isomorphism in the second column (given by the Chern character) is the map $\Q[[z]] \to \Q[[v]], z \mapsto v$. 

From the diagram above we find an alternative expression for $a_{\Q}$ in terms of the top row:
$$ a_{\Q}(X) = 2t + 4\min \{ k \geq 0 \mid \exists \ \epsilon \in \Q, \ g(z^k + \epsilon w) = 0 \}.$$
The conclusion now follows from the exactness of \eqref{eq:zebra}.
\end{proof}

In view of this proposition, we can compare the quantities $a_{\Q}(X)$ and $k(X)$ simply by inspecting the ideal $\i(X)$. In particular, we have:

\begin{corollary}
\label{cor:kh}
Suppose that $X$ is a space of type $\swf$ at an even level $2t$, such that $\i(X)$ is of the form $(z^k)$ or $(w^k, z^k)$ for some $k \geq 0$. Then, 
$$a_{\Q}(X) = 2t + 4k(X) = 2t + 4k.$$
\end{corollary}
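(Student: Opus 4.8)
The plan is to apply the preceding Proposition directly, distinguishing the two allowed forms of $\i(X)$ and checking that in each case the minimum over $k$ appearing in that Proposition agrees with $k(X)$. Recall that by definition
$$k(X) = \min\{k\geq 0 \mid \exists\, x \in \i(X),\ wx = 2^k w\},$$
and that in $R(G)$ one has $w^2 = wz = 2w$, so $w\cdot z^k = w\cdot w^k = 2^k w$. The Proposition expresses $a_\Q(X)$ as $2t + 4m$, where $m = \min\{k\geq 0 \mid \exists\, \lambda\in\Z^*,\mu\in\Z,\ \lambda z^k + \mu w \in \i(X)\}$. So the whole content of the corollary is the equality $m = k(X)$ under the stated hypotheses on $\i(X)$.

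First I would handle the case $\i(X) = (z^k)$. Here $k = k(X)$ by the computation already recorded in Section~\ref{sec:exk} (for split and $K_G$-split spaces, $\i(X) = (z^k)$ forces $k(X) = k$): indeed $z^k \in \i(X)$ gives $w\cdot z^k = 2^k w$, so $k(X)\leq k$; and a general element of $(z^k)$ has the form $z^k(\lambda w + P(z)) = \lambda 2^k w + z^k P(z)$, which is a multiple of $w$ only if $P = 0$, whence $k(X)\geq k$. The same description immediately gives $m = k$: taking $\lambda = 1$, $\mu = 0$ shows $z^k\in\i(X)$ so $m\leq k$, while for $\ell < k$ no element of $(z^k)$ equals $\lambda z^\ell + \mu w$ with $\lambda\neq 0$, by comparing lowest-degree terms in $z$ (an element of $(z^k)$ that is a polynomial in $z$ plus a multiple of $w$ must have its polynomial part divisible by $z^k$). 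Hence $m = k = k(X)$, and the Proposition yields $a_\Q(X) = 2t + 4k(X) = 2t + 4k$.

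Next I would treat $\i(X) = (w^k, z^k)$. Since $w\cdot w^k = 2^k w$ and $w^k\in\i(X)$, we get $k(X)\leq k$; conversely every element of $(w^k, z^k)$ is of the form $w^k A(w) + z^k B(z)$ for polynomials $A, B$ (using $w^2 = 2w$, $wz = 2w$ to reduce mixed terms to multiples of $w$, which can be absorbed into the $w^k A(w)$ part after possibly lowering... here one must be slightly careful, but every monomial $w^i z^j$ with $i\geq 1$ equals $2^{i+j-1}w$, so it is absorbed into lower powers of $w$ times a constant only if $i + j - 1 < k$; the upshot is that any element of the ideal that is a \emph{multiple of $w$} is an integer combination of $w^k, w^{k+1},\dots$, i.e.\ lies in $2^{k-1}w\Z$), so $wx \in 2^k w\Z$ and thus $k(X)\geq k$. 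For $m$: $z^k\in\i(X)$ gives $m\leq k$, and for $\ell < k$, $\lambda z^\ell + \mu w \in (w^k, z^k)$ with $\lambda\neq 0$ is impossible since, writing the ideal element as $w^k A(w) + z^k B(z)$, its ``$z$-polynomial part'' $z^k B(z)$ is divisible by $z^k$, contradicting a nonzero $z^\ell$ term with $\ell < k$. So again $m = k = k(X)$ and the Proposition finishes the proof.

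The main obstacle, and the only point needing real care, is the normal-form bookkeeping in the ring $R(G) = \Z[w,z]/(w^2 - 2w, zw - 2w)$: one must verify that every element of the relevant ideal can be written uniquely as (a $\Z$-polynomial in $z$ with no constant ambiguity) plus (an integer multiple of $w$ in the appropriate $2$-power lattice), so that comparing the ``$w$-free part'' and the ``$w$-part'' separately is legitimate. Once this normal form is in place — it follows from the relations $w z^j = 2^j w$ and $w^{i} = 2^{i-1}w$ for $i\geq 1$, giving $R(G)\cong \Z[z]\oplus w\Z$ as an abelian group with the evident multiplication — both the identification $m = k$ and the identification $k(X) = k$ are immediate, and the corollary follows formally from the Proposition. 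I would present the argument for the $(z^k)$ case in full and then remark that the $(w^k,z^k)$ case is identical after observing that the extra generator $w^k$ only contributes elements of $2^{k-1}w\Z$, which does not change either minimum.
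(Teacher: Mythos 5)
Your proposal is correct and follows the same implicit reasoning as the paper: the corollary reduces to checking that the minimum $m$ in the preceding Proposition coincides with $k(X)$, and this is done by working in the normal form $R(G)\cong\Z[z]\oplus w\Z$ and comparing the ``$z$-polynomial part'' and ``$w$-part'' of elements of $\i(X)$. One small presentational point in your $(z^k)$ case: the step ``which is a multiple of $w$ only if $P=0$, whence $k(X)\geq k$'' leaves an implicit link; the cleanest way to say it is that for any $x\in(z^k)$, the product $wx$ is itself a multiple of $w$ lying in $(z^k)$, hence of the form $\lambda 2^k w$, so $wx=2^j w$ forces $j\geq k$ (equivalently, one can compute directly that $wx=2^k(P(2)+2\lambda)w$). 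This is the same logic you spell out more carefully in the $(w^k,z^k)$ case via ``$wx\in 2^k w\Z$,'' and it is not a gap — just worth stating uniformly.
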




\subsection{Invariants of homology spheres} Let $Y$ be an integral homology sphere. (The whole discussion here can be extended to rational homology spheres with spin structures, but we restrict to integral homology spheres for simplicity.) In \cite[Section 3.5]{swfh}, we extracted from the $G$-equivariant Seiberg-\!Witten Floer homology of $Y$ three numerical invariants $\alpha(Y), \beta(Y), \gamma(Y) \in \Z$. Let us focus on $\alpha(Y)$, which can be expressed as
$$ \alpha(Y) = \tfrac{1}{2} \min \{ r \equiv 2\mu(Y) \! \pmod 4 \mid \exists x, \ 0\neq x \in {\iswfh}_r^G(Y; \F) \},$$
where $ {\iswfh}_r^G(Y; \F)$ is the ``infinite tail'' of $\swfh_r^G(Y; \F)$, and $\mu(Y)$ is the Rokhlin invariant. More concretely, if $g$ is a metric on $Y$ and $\nu \gg 0$ an eigenvalue cut-off as in Section~\ref{sec:fda}, we have:
$$ \alpha(Y) = (a(I_{\nu}) - \dim V^0_{-\nu})/2 - n(Y, g).$$
We can define a similar invariant using coefficients in $\Q$:
$$ \alpha_{\Q}(Y) = (a_{\Q}(I_{\nu}) - \dim V^0_{-\nu})/2 - n(Y, g).$$
Next, recall from Section~\ref{sec:k} that we have the Floer K-theoretic invariant
$$\kappa(Y) = 2k(I_{\nu}) - \bigl( \dim_{\R} V^0_{-\nu}(\H) \bigr) /2 - n(Y, g).$$

\begin{proposition}
\label{prop:Qswf}
Let $Y$ be a homology sphere.

$(a)$ Suppose that, for any $r \equiv 2\mu(Y) \! \pmod 4$, the group ${\iswfh}_r^G(X;\Z)$ has no $2$-torsion elements, and that there are no elements $x \in \swfh_r^G(X; \Z)$ such that $0 \neq 2x \in {\iswfh}_r^G(X;\Z)$ but $x \not \in {\iswfh}_r^G(X; \Z)$. Then, we have $\alpha(Y) = \alpha_\Q(Y)$.

$(b)$ Let $g$ be a metric on $Y$. If for all $\nu \gg 0$, either the ideal $\i(I_\nu)$ or $\i(\Sigma^{\tR} I_{\nu})$ (whichever is well-defined, depending on the parity of the level of the Conley index $I_{\nu}$) is of one of the types $(z^k)$ or $(w^k, z^k)$ for some $k \geq 0$, then $\alpha_{\Q}(X)= \kappa(X)$.
\end{proposition}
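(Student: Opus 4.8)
The plan is to deduce part~(a) from Proposition~\ref{prop:aaQ} and part~(b) from Corollary~\ref{cor:kh}; the only extra work is the bookkeeping that relates the Conley index to the invariants $\alpha$, $\alpha_\Q$ and $\kappa$ of $Y$. Fix a metric $g$ on $Y$ and an eigenvalue cut-off $\nu\gg 0$, and let $X$ denote the space of type $\swf$ at an even level that represents $\S(Y)$: thus $X=I_\nu$ when the Conley index is at an even level, and $X=\Sigma^{\tR}I_\nu$ otherwise. In the latter case the extra copy of $\tR$ raises both the level of $X$ and the quantity $\dim V^0_{-\nu}$ appearing in the correction terms by one, and these contributions cancel in each identity below; so I would argue as though $X=I_\nu$ is of type $\swf$ at an even level $2t=\dim V^0_{-\nu}(\tR)$.

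For part~(a), the degree shift relating the Floer groups of $Y$ to the Borel homology of $X$ carries the infinite tails $\iswfh_*^G(Y;\Z)$ and $\iswfh_*^G(Y;\Q)$ onto $\iH_*^G(X;\Z)$ and $\iH_*^G(X;\Q)$; hence the two hypotheses of part~(a) are precisely conditions (i) and (ii) of Proposition~\ref{prop:aaQ} for $X$, and that proposition yields $a(X)=a_\Q(X)$. Since $\alpha(Y)=\tfrac12\bigl(a(X)-\dim V^0_{-\nu}\bigr)-n(Y,g)$ and $\alpha_\Q(Y)=\tfrac12\bigl(a_\Q(X)-\dim V^0_{-\nu}\bigr)-n(Y,g)$ differ only in the replacement of $a$ by $a_\Q$, we get $\alpha(Y)=\alpha_\Q(Y)$.

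For part~(b), the hypothesis says $\i(X)$ has the form $(z^k)$ or $(w^k,z^k)$, so Corollary~\ref{cor:kh} applies and gives $a_\Q(X)=2t+4k(X)$. Substituting this into $\alpha_\Q(Y)=\tfrac12\bigl(a_\Q(X)-\dim V^0_{-\nu}\bigr)-n(Y,g)$ and using $\dim V^0_{-\nu}=\dim V^0_{-\nu}(\tR)+\dim_\R V^0_{-\nu}(\H)$ together with $2t=\dim V^0_{-\nu}(\tR)$ to cancel the $\tR$-contribution, one obtains $\alpha_\Q(Y)=2k(X)-\tfrac12\dim_\R V^0_{-\nu}(\H)-n(Y,g)$, which is exactly the formula for $\kappa(Y)$; hence $\alpha_\Q(Y)=\kappa(Y)$.

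The step I expect to take the most care is verifying the odd-level bookkeeping claimed above: that passing from $I_\nu$ to $\Sigma^{\tR}I_\nu$ changes the level and the correction terms of all three invariants by compensating amounts (noting that $\tR$-suspension leaves the $\H$-part of $V^0_{-\nu}$ unchanged), and that Proposition~\ref{prop:aaQ} and Corollary~\ref{cor:kh} --- whose proofs use only suspensions by the complex representations $\tC$ and $\H$ --- apply verbatim to the chosen representative $X$. Once that is in place, both parts are straightforward substitutions into the defining formulas for $\alpha$, $\alpha_\Q$ and $\kappa$.
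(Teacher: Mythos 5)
Your proof is correct and follows the paper's argument exactly: the paper itself simply cites Proposition~\ref{prop:aaQ} for part (a) and Corollary~\ref{cor:kh} for part (b), using that the level of $I_\nu$ is $\dim V^0_{-\nu}(\tR)$. Your write-up supplies the degree bookkeeping (including the odd-level $\Sigma^{\tR}$-suspension case) that the paper leaves implicit, and the substitutions check out.
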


\begin{proof}
Part (a) follows from Proposition~\ref{prop:aaQ}. Part (b) follows from Corollary~\ref{cor:kh}, using the fact that the level of $I_{\nu}$ is $\dim V^0_{-\nu}(\tR)$.
\end{proof}

Note that all the examples considered in Sections~\ref{sec:psc} and ~\ref{sec:brieskorn} satisfy the hypotheses in both parts of Proposition~\ref{prop:Qswf}. Hence, for those manifolds $Y$ we have $\alpha(Y) = \alpha_{\Q}(Y) = \kappa(Y)$. We expect that this fails in more complicated examples. 

\bibliography{biblio}
\bibliographystyle{custom}

\end{document}